\title[Minimal surfaces in $\RH^4$]{The moduli spaces of equivariant minimal surfaces in $\RH^3$ and $\RH^4$ via Higgs bundles}
\author{John Loftin}
\address{Department of Mathematics and Computer Science\\ Rutgers-Newark\\
Newark, NJ 07102, USA}
\email{loftin@rutgers.edu}
\author{Ian McIntosh}
\address{Department of Mathematics\\ University of York\\
York YO10 5DD, UK}
\email{ian.mcintosh@york.ac.uk}
\subjclass{20H10,53C43,58E20}
\date{April 23, 2018}
\newcommand{\Z}{\mathbb{Z}}
\newcommand{\C}{\mathbb{C}}
\newcommand{\Ct}{\mathbb{C}^\times}
\newcommand{\R}{\mathbb{R}}
\newcommand{\CP}{\mathbb{CP}}
\newcommand{\CH}{\mathbb{CH}}
\newcommand{\RH}{\mathbb{RH}}
\renewcommand{\P}{\mathbb{P}}
\newcommand{\caD}{\mathcal{D}}
\newcommand{\caE}{\mathcal{E}}
\newcommand{\caF}{\mathcal{F}}
\newcommand{\caH}{\mathcal{H}}
\newcommand{\caM}{\mathcal{M}}
\newcommand{\caR}{\mathcal{R}}
\newcommand{\caS}{\mathcal{S}}
\newcommand{\caT}{\mathcal{T}}
\newcommand{\caV}{\mathcal{V}}
\newcommand{\caW}{\mathcal{W}}
\newcommand{\fE}{\mathfrak{E}}
\newcommand{\Aut}{\operatorname{Aut}}
\newcommand{\End}{\operatorname{End}}
\newcommand{\Hom}{\operatorname{Hom}}
\newcommand{\Sec}{\operatorname{Sec}}
\newcommand{\Inn}{\operatorname{Inn}}
\newcommand{\Out}{\operatorname{Out}}
\newcommand{\Pic}{\operatorname{Pic}}
\newcommand{\Isom}{\operatorname{Isom}}
\newcommand{\Spin}{\operatorname{Spin}}
\newcommand{\im}{\operatorname{im}}
\newcommand{\ad}{\operatorname{ad}}
\newcommand{\tr}{\operatorname{tr}}
\newcommand{\h}[2]{\langle{#1},{#2}\rangle}
\newcommand{\II}{\mathrm{I\! I}}
\newcommand{\bz}{{\bar z}}
\newcommand{\bpartial}{{\bar\partial}}
\newcommand{\0}{\mathbf{0}}
\newcommand{\ocaM}{\overline{\mathcal{M}}}
\newtheorem{thm}{Theorem}[section]
\newtheorem{prop}[thm]{Proposition}
\newtheorem{lem}[thm]{Lemma}
\newtheorem{cor}[thm]{Corollary}
\newtheorem{defn}[thm]{Definition}
\theoremstyle{remark}
\newtheorem{rem}{Remark}[section]
\numberwithin{equation}{section}
\begin{document}

\begin{abstract}
In this article we introduce a definition for the moduli space of equivariant minimal
immersions of the Poincar\'e disc into a non-compact symmetric space, where
the equivariance is with respect to representations of the fundamental group of a
compact Riemann surface of genus at least two. We then study
this moduli space for the non-compact symmetric space $\RH^n$ and show how $SO_0(n,1)$-Higgs bundles
can be used to parametrise this space, making clear how the classical
invariants (induced metric and second fundamental form) figure in this picture. We use
this parametrisation to provide details of the moduli spaces for
$\RH^3$ and $\RH^4$, and relate their structure to the structure of the corresponding Higgs
bundle moduli spaces.
\end{abstract}

\maketitle

\section{Introduction}

In this article we study equivariant minimal immersions $f:\caD\to N$ of the Poincar\'{e} disc $\caD$ into a non-compact
symmetric space $N$. By ``equivariant'' we mean each immersion intertwines two
representations, into the isometry groups of $\caD$ and $N$ respectively, of the fundamental group of a closed oriented 
surface $\Sigma$ of genus at least two. Since there can be no compact minimal surfaces in a non-compact (globally) symmetric
space the equivariant minimal discs are the closest analogy to compact minimal surfaces in $N$.

Our aim here is to introduce a definition for the moduli space of equivariant minimal discs and then
show how the \emph{non-abelian Hodge correspondence} can be used to parametrise this moduli space when $N=\RH^n$. 
If one takes the classical point of view it is very hard to describe the moduli space of such minimal
immersions: the classical data is the induced metric, a
metric connexion in the normal bundle, and the second fundamental form, all three linked by the Gauss-Codazzi-Ricci equations, which are
a system of non-linear p.d.e. Even for minimal surfaces
in $\RH^3$ the description using this approach is challenging (cf. \cite{Tau}). By contrast, we will show that by exploiting the 
non-abelian Hodge correspondence we can use Higgs bundles to provide a
parametrisation of these minimal surfaces using purely holomorphic data. The most difficult part of the parametrisation is
to understand the conditions on the parameters which make the Higgs bundles (poly)stable: we give the details of this for
$n=3,4$. The ideas here put into broader context, and are the 
natural extension of, our study of minimal surfaces in $\CH^2$ \cite{LofM15}.

To explain our approach, first recall that the non-abelian Hodge correspondence provides a
homeomorphism between two different moduli spaces. On one side is the \emph{character variety} $\caR(\pi_1\Sigma,G)$
of a non-compact semi-simple Lie group $G$, where $\Sigma$ is a smooth closed oriented surface of genus $g\geq 2$.
The character variety is the moduli space of $G$-conjugacy classes of reductive representations of $\pi_1\Sigma$
in $G$. On the other side is the moduli space $\caH(\Sigma_c,G)$ of
\emph{polystable $G$-Higgs bundles} over a compact Riemann surface $\Sigma_c$. Here we
use $\Sigma_c$ to denote $\Sigma$ equipped with a complex structure.
The space $\caH(\Sigma_c,G)$ parametrises solutions of an appropriate version of the self-dual Yang-Mills
equations over $\Sigma_c$.
These moduli spaces are homeomorphic (and diffeomorphic away from singularities), but $\caH(\Sigma_c,G)$ also has a
complex structure which depends upon $\Sigma_c$. This correspondence developed from the seminal
work of Hitchin \cite{Hit87}, Donaldson \cite{Don}, Corlette \cite{Cor} and Simpson \cite{Sim88} (with
the case of stable Higgs bundles for arbitrary real reductive groups proven in \cite{BGPM}).  The half of the non-abelian Hodge correspondence due to Hitchin and Simpson, in which the polystable Higgs bundle is shown to produce an equivariant harmonic map, is the
Higgs bundle case of the Donaldson-Uhlenbeck-Yau correspondence.
There are now many good surveys available of the principal results of non-abelian Hodge theory (see, for example,
\cite{Garapp,Gar,Wen}).

The link to minimal surfaces is that the non-abelian Hodge correspondence already tells us about all
equivariant (or ``twisted'') harmonic maps: fix a Fuchsian
representation $c$ of $\pi_1\Sigma$ into the group $\Isom^+(\caD)$ of oriented isometries of the Poincar\'e disc $\caD$,
so that $\Sigma_c\simeq \caD/c(\pi_1\Sigma)$. For every irreducible
representation $\rho:\pi_1\Sigma\to G$, there is a unique equivariant harmonic map $f:\caD\to N$ 
into the non-compact symmetric space $N$ associated to $G$ \cite{Don,Cor} (equivariance means that
$f\circ c(\delta) = \rho(\delta)\circ f$ for all $\delta\in\pi_1\Sigma$). A
map from a surface is minimal when it is both conformal and harmonic, and it is easy to show that
$f$ is conformal when the Higgs field $\Phi$ of the corresponding Higgs bundle satisfies $\tr(\ad\Phi^2)=0$ and
$\Phi$ does not vanish (more generally, $f$ will have branch points at zeroes of $\Phi$). Note that when $\rho$
is discrete we obtain an incompressible minimal immersion of $\Sigma$ into the locally symmetric space
$N/\rho(\pi_1\Sigma)$.

To obtain all equivariant minimal immersions one must allow $c$ to range over all Fuchsian representations. The
natural equivalence for equivariant immersions means we only care about the conjugacy class of $c$, and these are
parametrised by the Teichm\"{u}ller space $\caT_g$ of $\Sigma$. We will also insist that the representation $\rho$
is irreducible, for otherwise the same map $f$ can be equivariant with respect to more than one representation $\rho$.
For the case where $N=\RH^n$
(or $\CH^n$) this is equivalent to the condition that $f$ is \emph{linearly full}, i.e., does not take values in a
lower dimensional totally geodesic subspace.
With this assumption we can embed the moduli space $\caM(\Sigma,N)$ of such equivariant minimal immersions into the
product space $\caT_g\times \caR(\pi_1\Sigma,G)$:  the details of this are explained in \S 2.1 below. 

From \S 2.2 onwards we focus on the case where $N=\RH^n$, $G=SO_0(n,1)$ is the group of orientation preserving isometries, and $f$ is
oriented (i.e., we fix an isomorphism $\wedge^{n-2}T\Sigma^\perp\simeq 1$). 
We begin by showing how the induced metric and second fundamental form of an equivariant minimal surface in $\RH^n$ determines the
Higgs bundle for $\rho$. From the work of Aparacio \& Garc\'ia-Prada \cite{ApaG}, we know that each $SO_0(n,1)$-Higgs bundle 
$(E,\Phi)$ has $E=V\oplus
1$ where $V$ is an $SO(n,\C)$-bundle (i.e., rank $n$ with trivial determinant line bundle and an orthogonal structure $Q_V$) and
$1$ denotes the trivial bundle. We show
in Theorem \ref{thm:Higgs} that this is the Higgs bundle for an equivariant minimal immersion if and only if it is polystable
and $V$ is constructed from an $SO(n-2,\C)$ bundle $(W,Q_W)$ and a cohomology class $\xi\in H^1(\Hom(W,K^{-1}))$, where $K$ is the
canonical bundle of $\Sigma_c$, as follows. As a smooth bundle $V$ is just
$K^{-1}\oplus W\oplus K$. This has a natural orthogonal structure $Q_V$ given by $Q_W$ on $W$
together with the canonical pairing $K\times K^{-1}\to 1$ on $K^{-1}\oplus K$. The class $\xi$
is the extension class of a rank $n-1$ holomorphic bundle $V_{n-1}$
\[
0\to K^{-1}\stackrel{\phi}{\to} V_{n-1}\to W\to 0.
\]
We show there is a unique holomorphic structure $V$ on $K^{-1}\oplus W\oplus K$ for which: (a) the projection
$V\to K$ is holomorphic and has kernel isomorphic to $V_{n-1}$, (b) the orthogonal structure
$Q_V$ is holomorphic. The Higgs field $\Phi$ is then determined by $\phi$ and its dual with respect to $Q_V$. 
However, the group $SO(Q_W)$ of isomorphisms of $(W,Q_W)$ leaves invariant the isomorphism class of the Higgs bundle,
so that two extension classes $\xi$ and $\xi'$ determine the same
equivariant minimal immersion when $\xi'=g\cdot \xi$ for $g\in SO(Q_W)$ (the action is by pre-composition on sections of
$\Hom(W,K^{-1})$). We will denote the corresponding equivalence class by 
\[
[\xi]\in H^1(\Hom(W,K^{-1}))/SO(Q_W).
\]
We show that, in terms of the geometry of the minimal surface, $W$ is just the complexified normal bundle of the immersion, 
and $\xi$ comes from the $(0,2)$ part of the second fundamental form. 

To use $(W,Q_W,[\xi])$ to parametrise $\caM(\Sigma,\RH^n)$ we must know when the Higgs
bundle it produces is polystable (and indecomposable, to ensure that the representation is irreducible). At present we can
only do this in
detail when $n=3,4$. Indeed, the case $n=3$ is very easy since $W$ is trivial. As a consequence we quickly recover a description 
of $\caM(\Sigma,\RH^3)$ which was originally due to Alessandrini \& Li \cite{AL}: we include it here since this moduli
space plays a role in the boundary of $\caM(\Sigma,\RH^4)$. 
The parametrisation shows that $\caM(\Sigma,\RH^3)$ it is diffeomorphic to the punctured tangent bundle of Teichm\"uller space, i.e, 
$T\caT_g$ with its zero section removed.
Moreover, the zero section corresponds to the totally geodesic minimal immersions, which necessarily lie in a copy of $\RH^2$
and are not linearly full. 
The moduli space of minimal surfaces in $3$-dimensional hyperbolic space
forms was also studied by Taubes \cite{Tau}, using the more traditional approach of classifying these
surfaces by their metric and Hopf differential via the Gauss-Codazzi equations.
We explain, in Remark \ref{rem:n=3}, how our results fit in with Taubes' space of
``minimal hyperbolic germs''.

The structure of $\caM(\Sigma,\RH^4)$ is more interesting. In this case the normal bundle $T\Sigma^\perp$ has an Euler number
$\chi(T\Sigma^\perp)$ and we show that this integer invariant is bounded and indexes the connected components of the moduli
space. To be precise, we prove:
\begin{thm}
The moduli space $\caM(\Sigma,\RH^4)$ can be given the structure of a non-singular complex manifold of dimension
$10(g-1)$. It
has $4g-5$ connected components $\caM_l(\Sigma,\RH^4)$, with integer index satisfying $|l|<2(g-1)$. The component
$\caM_l(\Sigma,\RH^4)$ consists of all linearly full minimal immersions whose normal bundle has $\chi(T\Sigma^\perp)=l$.
\end{thm}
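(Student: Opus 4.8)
The plan is to build $\caM(\Sigma,\RH^4)$ directly from the parametrising data $(\Sigma_c,W,Q_W,[\xi])$ supplied by Theorem \ref{thm:Higgs}, using the non-abelian Hodge correspondence to transfer the complex structure from the Higgs bundle moduli space. Since $n=4$, the normal bundle data is a rank-two holomorphic bundle $W$ with trivial determinant and an orthogonal structure $Q_W$; such a bundle reduces to a line bundle picture, namely $W\simeq L\oplus L^{-1}$ with $Q_W$ the off-diagonal pairing (when $W$ is not stable as an $SO(2,\C)$-bundle) or its $S^1$-twisted analogues, and the integer $l=\deg L=\chi(T\Sigma^\perp)$ is the Euler number of the normal bundle. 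So the first step is to rewrite the extension class $\xi\in H^1(\Hom(W,K^{-1}))$ and the $SO(Q_W)$-action concretely in terms of $L$, reducing $SO(Q_W)$ to $\C^\times\rtimes\Z/2$ acting on $H^1(L^{-1}K^{-1})\oplus H^1(LK^{-1})$ by scaling the two summands oppositely and swapping them.

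The second step is to determine exactly which $(L,[\xi])$ give a polystable, indecomposable $SO_0(4,1)$-Higgs bundle; by Theorem \ref{thm:Higgs} this is precisely the locus parametrising linearly full equivariant minimal immersions. Here one runs the standard Higgs-bundle (semi)stability analysis for the bundle $V=K^{-1}\oplus W\oplus K$ with its Higgs field $\Phi$ built from $\phi$: enumerate the $\Phi$-invariant isotropic subbundles, compute their degrees, and extract the inequalities on $\deg L$ and on the vanishing/non-vanishing of the components of $\xi$. This is where the bound $|l|<2(g-1)$ must emerge: a subbundle such as $K^{-1}\oplus L$ (or $K^{-1}\oplus L^{-1}$) is $\Phi$-invariant precisely when the corresponding component of $\xi$ vanishes, and the requirement that its degree not exceed $0$, combined with the Milnor--Wood-type constraint already implicit in $SO_0(4,1)$-Higgs bundles, forces $|l|\le 2(g-1)$; the case $|l|=2(g-1)$ will have to be excluded because it forces reducibility (the immersion drops into $\RH^3$ or becomes non-full), leaving the strict inequality. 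This is the step I expect to be the main obstacle: carefully matching the stability chambers to the full/non-full dichotomy, and checking indecomposability, requires a case analysis according to which of the two components of $\xi$ vanish, and one must verify that no further components are created or lost at the boundary values of $l$.

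The third step is to identify each connected component $\caM_l(\Sigma,\RH^4)$ as a complex manifold and compute its dimension. For fixed $l$ the data $(\Sigma_c, L, [\xi])$ fibres over Teichm\"uller space $\caT_g$ (dimension $3g-3$ over $\C$... i.e.\ $\dim_\C\caT_g=3g-3$), with fibre a bundle whose base records the choice of $L\in\Pic^l\Sigma_c$ (dimension $g$) and whose fibre is the space of admissible $[\xi]$. A Riemann--Roch count on $H^1(L^{\pm1}K^{-1})$ together with the quotient by the one-dimensional group $\C^\times$ gives the remaining $\dim_\C = 6(g-1) - g + \dots$; assembling the pieces must total $10(g-1)$, which one can cross-check against $\tfrac12\dim_\R$ of the relevant component of the $SO_0(4,1)$-character variety. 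Non-singularity follows because every such Higgs bundle is stable (not merely polystable) and indecomposable, so it is a smooth point of $\caH(\Sigma_c,SO_0(4,1))$, and the forgetful-type map to $\caT_g\times\caR(\pi_1\Sigma,SO_0(4,1))$ is a holomorphic immersion by the argument of \S2.1; the complex structure is the one pulled back from the Higgs moduli space at each fixed $\Sigma_c$, varying holomorphically with $\Sigma_c$.

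The final step is the count of components: $l$ ranges over integers with $|l|<2(g-1)$, i.e.\ $l\in\{-(2g-3),\dots,2g-3\}$, which is $4g-5$ values, and each $\caM_l$ is connected because its defining data---$\caT_g$, $\Pic^l\Sigma_c$, and the relevant open subset of a vector space modulo a connected group---is connected, while distinct $l$ give distinct components since $\chi(T\Sigma^\perp)$ is a deformation invariant. This yields the stated $4g-5$ connected components and completes the proof.
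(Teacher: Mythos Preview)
Your overall strategy matches the paper's: reduce $(W,Q_W)$ to $L\oplus L^{-1}$ with $l=\deg L$, split $\xi$ into two cohomology classes modulo a torus action, run the stability analysis to extract the bound $|l|<2(g-1)$, and then assemble the parameter spaces $\caW_{c,l}$ into a complex manifold fibred over $\caT_g$. Two points need correction.

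First, the stability analysis you sketch is incomplete. You describe the destabilising subbundles as direct sums like $K^{-1}\oplus L$, appearing only when a component of $\xi$ vanishes. In fact the rank-two isotropic subbundle $V_2$ (the extension $0\to K^{-1}\to V_2\to L\to 0$ determined by $[\beta_2]$) is \emph{always} present in $\ker\phi^t$, and stability demands that every holomorphic line subbundle of $V_2$ have negative degree. For $l\geq 1$ this is strictly stronger than $[\beta_2]\neq 0$: a non-trivial extension can still admit line subbundles of degree $0,1,\ldots,l-1$. Showing that the locus $\{\,[\beta_2]:\mu(V_2)<0\,\}$ is a \emph{non-empty} Zariski open subset of $H^1(K^{-1}L^{-1})$ is a separate result (Lemma~\ref{lem:maxdeg}), proved in the paper via secant varieties following Lange--Narasimhan. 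Without it you cannot conclude that $\caW_{c,l}$ is non-empty or open, so neither the dimension count nor connectedness follows. (Incidentally, the exclusion of $|l|=2(g-1)$ comes directly from $\deg V_2<0$, not from a reducibility argument.)

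Second, your smoothness argument has a gap. Knowing that each Higgs bundle in the image is stable and indecomposable gives smoothness of $\caH(\Sigma_c,G)$ at those points, but it does not show that the parametrisation $\psi:\caW_{c,l}\to\caH(\Sigma_c,G)$ is an immersion. The map $F$ of \S2.1 is only shown there to be injective. The paper establishes that $d\psi$ is injective by an explicit computation: one writes a one-parameter family $\bar\partial_V+tA$ and shows that if $A$ is infinitesimally gauge-trivial then the gauge transformation must be block upper-triangular of a constrained form, forcing $(\alpha,\eta_1,\eta_2)$ to be tangent to the fibre of $\hat\caW_{c,l}\to\caW_{c,l}$. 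This step is what actually transfers the smooth (complex) structure to $\caM(\Sigma,\RH^4)$.

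A minor correction: since $\det W$ is trivialised, $SO(Q_W)\simeq\C^\times$, not $\C^\times\rtimes\Z/2$; the swap of $L$ and $L^{-1}$ lies in $O(Q_W)\setminus SO(Q_W)$. This affects your quotient, though the dimension count survives.
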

It is worth remarking that the character variety $\caR(\pi_1\Sigma,SO_0(4,1))$ has the same real
dimension as $\caM(\Sigma,\RH^4)$. This begs a general question, which cannot be addressed here, as to the properties of
the projection from $\caM(\Sigma,N)$ to $\caR(\pi_1\Sigma,G)$ and particularly whether the image is open.

Our understanding of the structure goes beyond counting connected components. First, we show that each component is an open
subvariety of a complex analytic family over Teichm\"uller space. We denote
the fibre of $\caM(\Sigma,\RH^4)$ over $c\in\caT_g$ by $\caM(\Sigma_c,\RH^4)$. It is isomorphic to an open subvariety of
the nilpotent cone in $\caH(\Sigma_c,G)$, namely, those indecomposable Higgs bundles with $\tr(\ad\Phi^2)=0$. 
By Hausel's Theorem \cite{Hau} the nilpotent cone is a union
of the unstable manifolds of the downward gradient flow for the Higgs bundle energy $\tfrac12\|\Phi\|_{L^2}^2$ (which is 
sometimes called the
\emph{Hitchin function}) and the critical manifolds of this flow consist of Hodge bundles.
We show that the Hodge bundles correspond to a special class of minimal surfaces known as \emph{superminimal} surfaces, and 
we describe the conditions on the parameters $(W,Q_W,[\xi])$ which determine these.

The main structure of each component of the moduli space is carried by its fibre $\caM_l(\Sigma_c,\RH^4)$ over
$c\in\caT_g$. We show that for $l\neq 0$ this fibre is a complex vector bundle whose zero section is the submanifold 
$\caS_{c,l}$ of all superminimal immersions with $\chi(T\Sigma^\perp)=l$ and conformal structure $c$. Further, 
we show that this bundle is the downward gradient flow from $\caS_{c,l}$. 

The component $\caM_0(\Sigma,\RH^4)$ is slightly different because the corresponding manifold
$\caS_{c,0}$ parametrising superminimal immersions with flat normal bundle consists entirely of decomposable 
Hodge bundles, so these must be excluded from $\caM_0(\Sigma,\RH^4)$ but instead appears on its boundary. In fact, we show 
that a superminimal immersion with flat normal bundle must be totally geodesic into a copy of $\RH^2$. It follows that $\caS_{c,0}$ 
consists of multiple copies of $\caM(\Sigma,\RH^2)\simeq\caT_g$:
the multiplicity is caused by the freedom to choose how the corresponding reducible representations $\rho$ act in the normal bundle. 
Nevertheless, this component $\caM_0(\Sigma,\RH^4)$ does correspond to the downward gradient flow from $\caS_{c,0}$. In
addition to $\caS_{c,0}$, the boundary of $\caM_0(\Sigma,\RH^4)$
also contains one copy of $\caM(\Sigma,\RH^3)/\Z_2$, the moduli space of linearly full but unoriented minimal immersions into
$\RH^3$.

Finally, we observe that all these components $\caM_l(\Sigma,\RH^4)$ have a common ``boundary at infinity'' corresponding 
to the Higgs bundles
with zero Higgs fields: the absolute minima of the Hitchin function. Since the Hitchin function is essentially the area of
the minimal immersion (on a fundamental domain) this limit corresponds to collapsing the immersion to a constant map. 

By combining the results here with those of our earlier paper \cite{LofM15} we can make a reasonable conjecture for the
structure of $\caM(\Sigma,N)$ for an general rank one non-compact symmetric space. Namely, we expect it admits the complex 
structure of an open subvariety of a complex analytic family over
Teichm\"uller space. We expect the fibre over $[c]\in\caT_g$ is a disjoint union of vector bundles, 
each over a base which consists of the Hodge bundles in one critical manifold of the
Hitchin function. It is not hard to show that for $N=\CH^n$ the Hodge bundles always correspond to superminimal immersions
and we expect the same to hold for the other rank one symmetric spaces.
Given this, we conjecture that the connected components of $\caM(\Sigma,N)$ are
indexed by the topological invariants of linearly full superminimal immersions. It is
reasonable to expect to classify these for rank one symmetric spaces.

Outside the case of rank one symmetric spaces there is a great deal yet to be done.
Labourie conjectured in \cite{Lab06} that every \emph{Hitchin representation} into a split real form should admit a
unique minimal surface, which would imply that the moduli space of Hitchin representations (whose parametrisation is due to
Hitchin \cite{Hit92}) provides components of $\caM(\Sigma,N)$ when the isometry group $G$ of $N$ is a split real form.
Labourie recently proved his conjecture for split real forms of rank-two complex simple Lie groups \cite{Lab17}. Rank-two phenomena 
also appear a bit indirectly below as in harmonic sequence theory in Subsection \ref{superminimal-subsection} and the geometry of 
rank-two holomorphic vector bundles in Appendix \ref{appendix-b-section}. It should be quite interesting to develop the 
theory further in higher rank.

When $G=Sp(4,\R)$ the Hitchin representations are all \emph{maximal} (i.e., have maximal Toledo invariant) and
Collier \cite{Col} extended Labourie's result to cover all maximal representations in $Sp(4,\R)$. Very recently Collier and
collaborators have further extended this uniqueness result to maximal representations in $PSp(4,\R)$ \cite{AleC} and $SO(2,n)$
\cite{ColTT}.
Since $PSp(4,\R)\simeq SO_0(2,3)$ an adaptation of the techniques of this current paper may shed some light on the minimal
surfaces for non-maximal representations.
Finally, let us mention that Higgs bundles have also been used to study minimal surfaces by Baraglia \cite{Barthesis,Bar15}
(his concept of cyclic surfaces is central to Labourie's proof), while
Alessandrini \& Li characterised AdS 3-manifolds using minimal surfaces into the Klein quadric and Higgs bundles
for $G=SL(2,\mathbb R) \times SL(2,\mathbb R)$ \cite{AL15}. We also draw the reader's attention to the recent work
of Baraglia \& Schaposnik \cite{BarS}, which provides an interesting perspective on the structure of the regular
fibres of the Hitchin map for orthogonal Higgs bundles. Although we deal here exclusively with the non-regular fibre
$\tr\Phi^2=0$, it would be interesting to know how their extension bundle data behaves as one approaches this fibre.

\smallskip\noindent
\textbf{Acknowledgements.} This research was supported by the LMS Scheme 4 Research in Pairs grant \# 41532 and U.S. National Science 
Foundation grants DMS 1107452, 1107263, 1107367 ``RNMS:
GEometric structures And Representation varieties" (the GEAR Network).  The first author is also grateful to the Simons Foundation 
for partial support under Collaboration Grant for Mathematicians 210124.
Both authors are grateful to Laura Schaposnik and Steve Bradlow for informative conversations regarding orthogonal Higgs bundles.
The second author is grateful to the \textit{Centre for Quantum Geometry of Moduli Spaces}, University of Aarhus, for the
opportunity to visit in Sept.\ 2016 and discuss early stages of this work with Qiongling Li. In particular, \S 3 below is
based on observations
made by Qiongling Li and Daniele Alessandrini and we are grateful for their permission to use their unpublished ideas here.

\section{Equivariant minimal surfaces in $\RH^n$ and Higgs bundles.}

\subsection{Equivariant minimal surfaces.}
Let $\Sigma$ be a compact oriented surface of genus $g\geq 2$ and let $c:\pi_1\Sigma\to \Isom^+(\caD)$
be a Fuchsian representation into the group of orientation preserving isometries of the
Poincar\'{e} disc $\caD$. Let $\Sigma_c= \caD/c(\pi_1\Sigma)$ be the corresponding compact Riemann surface.
For any non-compact (and for simplicity, irreducible) globally symmetric space $N$, with
isometry group $G$, we say a minimal immersion $f:\caD\to N$ is equivariant with respect to a
representation $\rho:\pi_1\Sigma\to
G$ when $f\circ c(\delta)=\rho(\delta)\circ f$ for all
$\delta\in\pi_1\Sigma$. In general one wants to allow branched minimal immersions, which for this part of the
discussion we will do.
Naturally, we want to consider such triples $(f,c,\rho)$ to be equivalent when they are related by isometries
of the domain or codomain. Accordingly, we will write
\[
(f,c,\rho)\sim (f',c',\rho'),
\]
whenever there is $\gamma\in\Isom^+(\caD)$ and $g\in G$ for which
\[
c' = \gamma c\gamma^{-1},\quad \rho'=g\rho g^{-1}, \quad f' = gf_\gamma,
\]
where $f_\gamma(z) = f(\gamma^{-1}z)$.
The equivalence class will be written $[f,c,\rho]$.
The set of these equivalence classes for which $\rho$ is irreducible and $f$ is oriented
will be our \textit{moduli space of equivariant minimal surfaces in $N$}, which we will denote by $\caM(\Sigma,N)$.
We choose $\rho$ to be irreducible to avoid having a multiplicity of maps which only differ by changing a reductive factor
in the decomposition of $\rho$. For example, for every totally geodesic embedding of $\RH^2\simeq\caD$ in $\RH^n$ one can take
$\rho$ to be $c$ post-composed with an embedding of $\Isom^+(\caD)\simeq SO_0(2,1)$ into $SO_0(n,1)$, but one can also alter
this by any reductive representation of $\pi_1\Sigma$ into $SO(n-2)$ and acting in the normal bundle of the immersion. Such
behaviour turns up at the boundary of the moduli space and creates singularities and lower dimensional strata there.

The moduli space of Fuchsian representations up to conjugacy is Teichm\"{u}ller space $\caT_g$, while
the moduli space of reductive representations $\rho$ up to conjugacy is the character variety $\caR(\pi_1\Sigma,G)$.
The subset of irreducible representations is an open submanifold.
For a fixed conformal structure, standard uniqueness theorems for the harmonic metric (e.g.\ \cite{Cor,Don})
apply. It follows that we have an injective map
\begin{equation}\label{eq:F}
F:\caM(\Sigma,N)\to \caT_g\times \caR(\pi_1\Sigma,G), \quad [f,c,\rho]\mapsto ([c],[\rho]),
\end{equation}
where the square brackets denote conjugacy classes. The topology we will use for
$\caM(\Sigma,N)$ is the one induced by this injection. Indeed, we can use this to put a real analytic structure
on $\caM(\Sigma,N)$.
A conjugacy class $[c]\in\caT_g$ is sometimes called a \emph{marked conformal structure}:
we will denote the subset of equivariant minimal surfaces with fixed marked conformal structure
$[c]$ by $\caM(\Sigma_c,N)$.

Now recall the central result of non-abelian Hodge theory, which describes the relationship with Higgs bundles (see, for
example, \cite{Gar}).
This says that for each Fuchsian representation $c$ there is a
bijective correspondence  between polystable $G$-Higgs bundles over $\Sigma_c$ and
reductive representations $\rho:\pi_1\Sigma\to G$, up to their respective equivalence classes. This correspondence
gives a homeomorphism from the Higgs bundle moduli space $\caH(\Sigma_c,G)$
to $\caR(\pi_1\Sigma,G)$ which is real analytic away from singularities.

This central result works by assigning to each polystable Higgs bundle an equivariant harmonic map (to be precise, a triple
$[f,c,\rho]$ where $f$ is harmonic). It identifies
the Higgs field $\Phi$ with the restriction of $df$ to $T^{1,0}\caD$, which we will denote by $\partial f$. It is a well-known
fact that, using $g^\C$  to denote the complex bilinear extension of the metric $g$ of $N$,
$f$ is weakly conformal precisely
when $g^\C(\partial f,\partial f)=0$. Since the metric on $N$ comes from the Killing form on
$G$, it follows that this harmonic map $f$
is weakly conformal, and therefore a branched minimal immersion, precisely when the Higgs field $\Phi$ satisfies
$\tr(\ad\Phi^2)=0$. Therefore the map from $\caM(\Sigma_c,N)$ to the moduli space $\caH(\Sigma_c,G)$ of polystable
$G$-Higgs bundles, which assigns to each equivariant minimal surface its Higgs bundle data, is injective and its
image lies in the complex analytic subvariety given by the equation $\tr(\ad\Phi^2)=0$.
When $G$ is has real rank one (i.e., $N$ is a rank one symmetric space) this level set $\tr(\ad\Phi^2)=0$ is
the \emph{nilpotent cone}.

Now it is reasonable to expect that as $[c]$ varies over Teichm\"{u}ller space the Higgs bundle moduli spaces form a
\emph{complex analytic family} in the sense of Kodaira \& Spencer \cite{Kod}, and moreover that the
function $\tr(\ad\Phi^2)$ is holomorphic on this family. Given this, $\caM(\Sigma,N)$
would acquire the structure of a complex analytic family with fibres $\caM(\Sigma_c,N)$.
This has been shown for the case $N=\CH^2$ in \cite{LofM15} by focussing more directly on
the properties which characterise a $PU(2,1)$-Higgs bundle for which
$\tr(\ad\Phi^2)=0$.
\begin{rem}
It is very interesting to note that the mapping class group of $\Sigma$ acts naturally on $\caM(\Sigma,N)$. Recall that, by the
Dehn-Nielsen theorem, the mapping class group is isomorphic to the group $\Out(\pi_1\Sigma)=\Aut(\pi_1\Sigma)/\Inn(\pi_1\Sigma)$
of outer automorphisms of $\pi_1\Sigma$, where $\Inn(\pi_1\Sigma)$ is the subgroup of automorphisms given by conjugation.
For any $\tau\in\Aut(\pi_1\Sigma)$ and equivariant minimal surface $(f,c,\rho)$ it is easy to check that
$(f,\tau^*c,\tau^*\rho)$ is again an equivariant minimal surface and that the
equivalence class $[f,c,\rho]$ is unchanged when $\tau$ is an inner automorphism. Note that $\Out(\pi_1\Sigma)$ acts
similarly on $\caT_g\times \caR(\pi_1\Sigma, G)$ and that the embedding $F$ in \eqref{eq:F}
is equivariant with respect to this action.
\end{rem}

\subsection{Equivariant minimal surfaces in $\RH^n$.} We will now restrict our attention to the case where
$N=\RH^n$ and $G=SO_0(n,1)$, the connected component of the identity in $SO(n,1)$, for $n\geq 3$.
Our aim
here is to characterise the Higgs bundles which correspond to linearly full minimal immersions and show how
the Higgs bundle data relates to the metric and second fundamental form of the immersion.
Recall (from e.g., \cite{ApaG}) that an $SO_0(n,1)$-Higgs bundle over $\Sigma_c$ is uniquely determined by an
equivalence class of data $(V,Q_V,\varphi)$ where $V$ is a rank $n$ holomorphic vector bundle with a fixed isomorphism
$\det(V)\simeq 1$, $Q_V$ is an orthogonal
structure on $V$ and $\phi\in H^0(K\otimes\Hom(1,V))$. The Higgs bundle itself is the rank $n+1$ bundle $E=V\oplus 1$
with orthogonal structure
\begin{equation}\label{eq:Q_E}
Q_E = \begin{pmatrix} Q_V & 0 \\ 0 & -1\end{pmatrix},
\end{equation}
and Higgs field
\[
\Phi =\begin{pmatrix} 0 & \phi \\ \phi^t & 0 \end{pmatrix}
\]
where $\phi^t\in  H^0(K\otimes\Hom(V,1))$ is the dual of $\phi$ with respect to $Q_E$. Notice that we use a different
convention from \cite{ApaG} for the sign of the orthogonal structure on the trivial summand. This fits better with the the
interpretation of $Q_E$ as the complex bilinear extension of a Lorentz metric on $\R^{n,1}$ given below.
From now on we will simply refer to $(V,Q_V,\phi)$ as the $SO_0(n,1)$-Higgs bundle.

Now let us recall how the equivariant harmonic map $(f,c,\rho)$ is related to such a Higgs bundle.
Let $\C^{n,1}$ denote $\C^{n+1}$ as a pseudo-Hermitian vector space with inner product
\[
\h{v}{w} = v_1\bar w_1 +\ldots v_n\bar w_n-v_{n+1}\bar w_{n+1}.
\]
A pair $(c,\rho)\in\caT_g\times\caR(\pi_1\Sigma,G)$ provides an action of $\pi_1\Sigma$ on the trivial bundle
$\caD\times\C^{n,1}$ and the quotient is
a flat $SO_0(n,1)$-bundle $E=\caD\times_{\pi_1\Sigma}\C^{n,1}$ over $\Sigma_c$,
with pseudo-Hermitian metric $\h{\ }{\ }$, a real involution $\bar{\ }:E\to E$ and a
flat pseudo-Hermitian connexion $\nabla^E$. The first two equip $E$ with an
orthogonal structure $Q_E$ (given by $Q_E(\sigma,\sigma)= \h{\sigma}{\bar\sigma}$).
The connexion equips $E$ with a holomorphic structure $\bpartial_E=(\nabla^E)^{0,1}$ with
respect to which $Q_E$ is holomorphic.

From this the $SO_0(n,1)$-Higgs bundle $(V,Q_V,\phi)$ is obtained as follows.
We first identify $\RH^n$ with the pseudo-sphere
\[
 S^{n,1} = \{v\in\R^{n,1}:\h{v}{v}=-1,\ v_{n+1}>0\},
\]
in Minkowski space $\R^{n,1}$ so that we can consider $T\RH^n\subset
\RH^n\times\R^{n,1}$, with metric $g$ obtained by restriction of $\h{\ }{\ }$. We can consider
the quotient $V=(f^{-1}T^\C\RH^n)/\pi_1\Sigma$ as a subbundle of $E$. A choice of orientation of $S^{n,1}$ fixes
a trivial line subbundle $1$ such that $E = V\oplus 1$
and the decomposition is orthogonal. It follows that the orthogonal projection of
$\nabla^E$ onto $V$, which we will denote simply
by $\nabla$, coincides with the pullback of the Levi-Civita connexion on $\RH^n$.
Therefore the holomorphic structure $\bpartial_V$ of $V$ agrees with that induced by
$\nabla^{0,1}$.

The Higgs field carries the information of the differential of $f$. To be precise,
the differential of $f$ extends complex linearly to $df:T^\C\caD\to T^\C\RH^n $
and thus has a type decomposition
\[
df = \partial f +\bpartial f,\quad \partial f:T^{1,0}\caD\to T^\C\RH^n ,\quad \bpartial f:T^{0,1}\caD\to  T^\C\RH^n.
\]
By equivariance we can think of $\partial f$ as a section of $\caE^{1,0}(V)$. The harmonicity
of $f$ ensures that $\phi=\partial f$ is a holomorphic section of
$K\otimes\Hom(1,V)\simeq K\otimes V$.

In the reverse direction, when $(V,Q_V,\phi)$ is polystable and $\tr(\Phi^2)=0$ we obtain a weakly conformal
equivariant map. This map will be an immersion if and only if $\phi$ has no zeroes.
To understand more precisely the structure of $(V,Q_V,\phi)$ for a minimal immersion, we begin by characterising
those which have $\tr(\Phi^2)=0$ and $\phi$ nowhere vanishing.
\begin{lem}\label{lem:tr=0}
An $SO_0(n,1)$-Higgs bundle $(V,Q_V,\phi)$ satisfies $\tr(\Phi^2)=0$ precisely when $\phi^t\circ\phi=0$. In that case,
when $\phi$ has no zeroes $(V,Q_V,\phi)$ uniquely determines, and is determined by, a triple $(W,Q_W,[\xi])$ where $(W,Q_W)$ is a
holomorphic $SO(n-2,\C)$-bundle, $\xi\in H^1(\Hom(W,K^{-1}))$ and $[\xi]$ is its equivalence class under the
action of the group $SO(Q_W)$ of isomorphisms of $(W,Q_W)$.
\end{lem}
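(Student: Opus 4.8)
First I would dispose of the trace condition. In the splitting $E=V\oplus 1$ one has $\Phi^2=\diag(\phi\circ\phi^t,\ \phi^t\circ\phi)$, so $\tr(\Phi^2)=\tr(\phi\circ\phi^t)+\tr(\phi^t\circ\phi)=2\,\phi^t\circ\phi$, regarded as a section of $K^2$, since the trace of $\phi\circ\phi^t\in H^0(K^2\otimes\End V)$ equals the scalar $\phi^t\circ\phi\in H^0(K^2)$. Hence $\tr(\Phi^2)=0$ if and only if $\phi^t\circ\phi=0$, and as $\tr(\ad\Phi^2)$ differs from $\tr(\Phi^2)$ only by the nonzero Killing-form factor $n-1$, the two vanishing conditions agree. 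When $\phi$ has no zeroes I would record the geometric meaning of this: $L:=\im\phi$ is then a holomorphic line subbundle of $V$ which $\phi$ identifies with $K^{-1}$, and since $\phi^t$ is, up to sign, the composite $V\xrightarrow{Q_V}V^*\to(K^{-1})^*=K$, the identity $\phi^t\circ\phi=0$ says exactly that $L$ is isotropic for $Q_V$.

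Next, the forward map $(V,Q_V,\phi)\mapsto(W,Q_W,[\xi])$. From the isotropic line $L\cong K^{-1}$ form the flag $L\subset L^\perp\subset V$, with $L^\perp$ the $Q_V$-orthogonal complement of $L$. Because $L$ is isotropic it is the radical of $Q_V|_{L^\perp}$, so $W:=L^\perp/L$ inherits a nondegenerate orthogonal structure $Q_W$; using $V/L^\perp\cong L^*\cong K$ and additivity of degrees one gets $\det W\cong\det V\cong 1$, so $(W,Q_W)$ is a holomorphic $SO(n-2,\C)$-bundle. Setting $V_{n-1}:=L^\perp$, let $\xi\in H^1(\Hom(W,K^{-1}))$ be the extension class of $0\to K^{-1}\xrightarrow{\phi}V_{n-1}\to W\to 0$. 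This construction is canonical apart from the choice of a holomorphic isomorphism $W\cong L^\perp/L$, which alters $\xi$ by the $SO(Q_W)$-action, so $[\xi]$ is well defined; and an isomorphism of Higgs bundles induces one of the associated triples.

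For the reverse direction, following the description preceding Theorem \ref{thm:Higgs}, I would start from the smooth bundle $K^{-1}\oplus W\oplus K$ with the smooth orthogonal structure $Q_V$ pairing $K^{-1}$ with $K$ and restricting to $Q_W$, together with the smooth inclusion $\phi\colon K^{-1}\hookrightarrow K^{-1}\oplus W\oplus K$ (so $\det V\cong\det W\cong 1$). Any holomorphic structure making $Q_V$ holomorphic --- condition (b) --- has the form $\bpartial_0+A$ with $A\in\Omega^{0,1}(\so(Q_V))$, there being no integrability obstruction on a curve. Expanding $\so(Q_V)$ in the block decomposition $1\oplus(n-2)\oplus 1$ shows: the hyperbolic summand $K^{-1}\oplus K$ contributes only its one-dimensional torus, so the two corner blocks $K^{-1}\to K$ and $K\to K^{-1}$ are forced to vanish (this is where (b) kills a spurious $H^1(K^{-2})$ \emph{quadratic} freedom in the extension $0\to V_{n-1}\to V\to K\to 0$), while the $K\to W$ and $W\to K^{-1}$ blocks are interchanged by $Q_W$-transposition and the $W\to W$ block lies in $\so(Q_W)$. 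Demanding further that $\phi$ be holomorphic and that the projection onto $K$ be holomorphic with kernel $V_{n-1}$ --- condition (a) --- forces the strictly lower-triangular blocks to vanish and the diagonal blocks to be $K^{-1}$, $W$, $K$; after a unipotent $SO(Q_V)$-gauge transformation fixing $L$ pointwise (hence fixing $\phi$) the only surviving datum is the $\Hom(W,K^{-1})$-block, whose class is $\xi$. This yields existence, and shows two Higgs bundles producing the same $[\xi]$ are isomorphic, so the two constructions are mutually inverse. The main obstacle is this last uniqueness/gauge-fixing step: verifying that the diagonal blocks can be normalised by a flag-preserving, $\phi$-preserving $SO(Q_V)$-valued gauge transformation, and that inside a fixed $SO(Q_W)$-orbit the unipotent radical acts transitively on the representatives of $\xi$.
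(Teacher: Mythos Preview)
Your argument is correct and follows essentially the same route as the paper: the forward direction via the flag $L\subset L^\perp\subset V$ is identical (the paper writes $\ker\phi^t$ for your $L^\perp$), and for the converse the paper, like you, writes the $\bar\partial$-operator in block form relative to $K^{-1}\oplus W\oplus K$, using holomorphicity of the flag to kill the lower-triangular blocks and holomorphicity of $Q_V$ to force $\alpha_2=0$ and $\alpha_3=-\alpha_1^t$ (your $\so(Q_V)$ analysis in reverse order). The step you flag as the ``main obstacle'' --- that two representatives of $[\xi]$ in the same $SO(Q_W)$-orbit give isomorphic Higgs bundles --- is resolved in the paper by an explicit flag- and $\phi$-preserving $SO(Q_V)$-gauge transformation
\[
T=\begin{pmatrix}1&0&0\\0&g&0\\0&0&1\end{pmatrix}
\begin{pmatrix}1&\psi&-\tfrac12\psi\psi^t\\0&1&-\psi^t\\0&0&1\end{pmatrix},
\]
with $g\in SO(Q_W)$ and $\psi\in\caE^0(\Hom(W,K^{-1}))$, which conjugates one $\bar\partial_V$ to the other; note that no normalisation of the diagonal blocks is required, since condition (a) already pins them down to the given structures on $K^{-1}$, $W$, $K$.
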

\begin{proof}
It is a simple exercise to show that $\tr(\Phi^2)=0$ if and only if $\phi^t\circ\phi=0$. Now let $V_{n-1}=\ker(\phi^t)$
and define $W = V_{n-1}/\im(\phi)$. Since $\phi$ has no zeroes this describes $V_{n-1}$ as an extension bundle
\begin{equation}\label{eq:phi}
0\to K^{-1}\stackrel{\phi}{\to}V_{n-1}\to W\to 0.
\end{equation}
This determines $W$ and an extension class $\xi\in H^1(\Hom(W,K^{-1}))$. Further, $\phi^t\circ\phi=0$ implies that
$\im(\phi)$ is $Q_V$-isotropic since $Q_V(\phi(Z),\phi(Z)) = Q_V(Z,\phi^t\circ\phi(Z))=0$. Hence $Q_V$ descends to an orthogonal
structure $Q_W$ on $W$. Isomorphisms of $(V,Q_V,\phi)$ act non-trivially on the extension class through isomorphisms of
$(W,Q_W)$, and therefore $\xi$ is only determined up to this action of $SO(Q_W)$. 

Conversely, given $(W,Q_W,[\xi])$ we model $V$ smoothly on $K^{-1}\oplus W\oplus K$ and give it the orthogonal structure
\begin{equation}\label{eq:Q_V}
Q_V=\begin{pmatrix} 0&0&1\\0&Q_W&0\\ 1&0&0\end{pmatrix},
\end{equation}
i.e., using $Q_W$ and the canonical pairing $K^{-1}\times K\to 1$. Then $(V,Q_V)$ has an $SO(n,\C)$-structure since there
is a fixed isomorphism $\det(V)\simeq\det(W)\simeq 1$. The Higgs field is determined by the inclusion
$\phi:K^{-1}\to V$. We claim $V$ has a unique holomorphic structure for which: (i)
$V_{n-1}$ is given up to isomorphism by $[\xi]$ as in \eqref{eq:phi}, (ii) $Q_V$ is holomorphic. To see
this, we observe that any holomorphic structure for which the flag $K^{-1}\subset V_{n-1}\subset V$ is holomorphic must
correspond to a $\bpartial$-operator of the form
\[
\bpartial_V = \begin{pmatrix} \bpartial & \alpha_1 & \alpha_2\\
0&\bpartial_W&\alpha_3\\
0&0&\bpartial
\end{pmatrix}
\]
where the matrix indicates how the operator acts on $V$ as the direct sum
$K^{-1}\oplus W\oplus K$. Here $\bpartial_W$ induces the holomorphic structure on $W$,
$\bpartial$ induces the holomorphic structure on both $K$ and $K^{-1}$, and $\alpha_j$ are $(0,1)$-forms
taking values in the appropriate bundle
homomorphisms (so $\alpha_1\in\caE^{0,1}(\Hom(W,K^{-1}))$ and so forth). A straightforward calculation shows that
for $Q_V$ to be holomorphic we must have $\alpha_1=-\alpha_3^t$ and
$\alpha_2=0$. Clearly $\alpha_1$ is a representative of $[\xi]$ in the sense that  
the cohomology  class class of $\alpha_1$ in $H^{0,1}(\Hom(W,K^{-1}))$ determines the extension \eqref{eq:phi}
via the Dolbeault isomorphism $H^{0,1}(\Hom(W,K^{-1}))\simeq H^1(\Hom(W,K^{-1}))$. 
Now suppose $\alpha_1'$ is any other representative of $[\xi]$, i.e, there exist $g\in SO(Q_W)$ such that $\alpha_1'$ and
$\alpha_1\circ g$ are cohomologous. Then there is $\psi\in\caE^0(\Hom(W,K^{-1}))$ such that
\[
\alpha_1' = \alpha_1\circ g +\bar\partial\psi-\psi\bar\partial_W,
\]
A direct calculation shows that the operator $\bar\partial_V'$ obtained from $\bar\partial_V$ by using $\alpha_1'$ in place 
of $\alpha_1$ is given by the gauge transformation $\bar\partial_V' = T^{-1}\bar\partial_V T$ where
\[
T = \begin{pmatrix} 1&0&0\\ 0&g&0\\0&0&1\end{pmatrix}
\begin{pmatrix} 1 & \psi &\theta\\ 0&1&-\psi^t\\ 0&0&1\end{pmatrix}
\]
and $\theta\in\caE^0(\Hom(K,K^{-1}))$ is given by $\theta=-\tfrac12 \psi\circ\psi^t$.
Further, this gauge transformation preserves $Q_V$ and the Higgs field.
Hence $(V,Q_V,\phi)$ is well-defined by the data $(W,Q_W,[\xi])$.
\end{proof}
We will also need to understand when $(V,Q_V,\phi)$ is polystable.
Fortunately there is a simple characterisation due to Aparicio \& Garc\'ia-Prada.
\begin{thm}[\cite{ApaG}, Prop 2.3 \& Thm 3.1]\label{thm:stab}
When $n>2$ an $SO_0(n,1)$ Higgs bundle is stable if for any isotropic subbundle
$W\subset V$ with $\phi^t(W)=0$ we have $\deg(W)<0$. It is polystable if it is
a direct sum of stable $G$-Higgs bundles where $G$ is either $SO_0(k,1)$, $SO(k)$ or $U(k)\subset SO(2k)$.
\end{thm}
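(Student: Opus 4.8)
The plan is to derive the criterion from the general (semi/poly)stability formalism for $G$-Higgs bundles over a real reductive group, as set up in \cite{BGPM}, specialised to $G=SO_0(n,1)$. A maximal compact subgroup of $G$ has complexification $H^\C=SO(n,\C)$, realised as the structure group of $(V,Q_V)$, and its isotropy representation $\fm^\C$ is the standard representation $\C^n$, so that a $G$-Higgs bundle is exactly the triple $(V,Q_V,\phi)$ with $\phi\in H^0(V\otimes K)$ recalled above. In this formalism (semi)stability is tested against every reduction $\sigma$ of the $SO(n,\C)$-bundle of $V$ to a parabolic subgroup $P$ and every antidominant character $\chi$ of $P$, by requiring $\deg(V)(\sigma,\chi)\geq 0$ (respectively $>0$), subject to the constraint that $\phi$ be a holomorphic section of the subbundle $\fm^\C_{\sigma,\chi}\otimes K\subseteq V\otimes K$ spanned by the $\chi$-nonpositive weight spaces. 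Parabolic subgroups of $SO(n,\C)$ are stabilisers of isotropic flags $0\subsetneq W_1\subsetneq\cdots\subsetneq W_r$ in $V$, and every antidominant character of such a parabolic is a nonnegative combination $\sum_i\lambda_i\chi_{W_i}$ of the fundamental characters attached to the steps; so the stability condition amounts to the resulting family of flag inequalities.

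The first step, which is the technical heart, is to reduce this family to the single isotropic subbundles. For the maximal parabolic $P_W$ of a single isotropic $W\subset V$ and its fundamental antidominant character $\chi_W$, the weights of $\chi_W$ on $\fm^\C=V$ are $-1$ on $W$, $0$ on $W^\perp/W$ and $+1$ on $V/W^\perp$; hence $\fm^\C_{\sigma,\chi_W}=W^\perp$ and the constraint ``$\phi\in H^0(\fm^\C_{\sigma,\chi_W}\otimes K)$'' reads $Q_V(\phi,W)=0$, i.e.\ $\phi^t(W)=0$. A short computation gives $\deg(V)(\sigma,\chi_W)=-\deg W$, so the inequality attached to $(P_W,\chi_W)$ is precisely $\deg W<0$ whenever $W$ is isotropic with $\phi^t(W)=0$ (and $\deg W\leq 0$ for semistability). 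For a general flag $0\subsetneq W_1\subsetneq\cdots\subsetneq W_r$ and $\chi=\sum_i\lambda_i\chi_{W_i}$ the same weight count gives $\fm^\C_{\sigma,\chi}=W_r^\perp$ (using $W_1\subset\cdots\subset W_r\subset W_r^\perp$), so the Higgs-field constraint for the flag is $\phi^t(W_r)=0$, which forces $\phi^t(W_i)=0$ for every $i$; additivity of the parabolic degree gives $\deg(V)(\sigma,\chi)=\sum_i\lambda_i(-\deg W_i)$. Thus each flag inequality is a nonnegative combination of the single-subbundle ones and follows from them. This is the asserted stability criterion; read from the degree condition to stability it gives the sufficiency direction used later in the paper.

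For the polystability statement I would use the standard fact that a polystable $G$-Higgs bundle that is not stable admits a reduction $(\sigma,\chi)$ realising equality $\deg(V)(\sigma,\chi)=0$ together with a further reduction of $P$ to its Levi factor $L$ making $(\sigma',\phi)$ an $L$-Higgs bundle, now with $\phi$ a section of the $\chi$-weight-zero part. Iterating to a deepest such $L$ yields a \emph{stable} $L$-Higgs bundle, and an $L$-Higgs bundle for $L=\prod_i GL(d_i,\C)\times SO(m,\C)$ is just a tuple of Higgs bundles over the factors. Unwinding, $V$ splits $Q_V$-orthogonally as $V'\oplus\bigoplus_i(N_i\oplus N_i^\circ)\oplus\bigoplus_j O_j$ where $\phi\in H^0(V'\otimes K)$ and $(V',Q_{V'},\phi)$ is a stable $SO_0(m,1)$-Higgs bundle, each $N_i\oplus N_i^\circ$ is a hyperbolic pair with $N_i$ a stable bundle and zero Higgs field (a stable $U(d_i)\subset SO(2d_i)$-Higgs bundle), and each $O_j$ is a $Q_V$-nondegenerate degree-zero summand with zero Higgs field (a stable $SO(e_j)$-Higgs bundle). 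This is the stated description of polystable $SO_0(n,1)$-Higgs bundles.

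The main obstacle is the reduction carried out in the second paragraph: one must identify the $\chi$-nonpositive weight subbundle of $\fm^\C=V$ for an \emph{arbitrary} antidominant character of \emph{every} parabolic, confirm that the resulting Higgs-field constraint really forces $\phi^t$ to annihilate every step of the flag, and verify the claimed additivity of the parabolic degree --- this is where the convexity of the cone of antidominant characters is used. A minor additional point is small-rank bookkeeping: the hypothesis $n>2$ excludes the degenerate groups $SO(1,\C)$ and $SO(2,\C)$, and when $n$ is even and $W$ is a maximal isotropic subbundle ($W=W^\perp$) one should check separately that the inequality and the constraint degenerate consistently; neither is serious but both must be addressed for a complete proof.
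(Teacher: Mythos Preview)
The paper does not prove this statement at all: it is quoted verbatim as a result of Aparicio and Garc\'ia-Prada \cite{ApaG} (their Prop.\ 2.3 and Thm.\ 3.1), with no argument supplied. So there is no ``paper's own proof'' to compare against.

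That said, your sketch is essentially the argument one finds in \cite{ApaG}, which in turn specialises the general stability machinery of \cite{BGPM} to $G=SO_0(n,1)$. Your identification of $H^\C=SO(n,\C)$, of $\fm^\C$ as the standard representation, of parabolics as stabilisers of isotropic flags, and your weight computation for the fundamental antidominant character of a maximal parabolic are all correct, and the reduction from general flags to single isotropic subbundles by convexity of the antidominant cone is exactly how the simplification goes. The polystability description via reduction to a Levi and the resulting block decomposition of $(V,Q_V,\phi)$ is likewise the standard route. The caveats you flag at the end (careful bookkeeping with sign conventions for the parabolic degree, the even-$n$ maximal-isotropic case where there are two conjugacy classes of maximal parabolics, and the small-rank exclusions) are real but routine; none of them threatens the argument. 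In short, your plan is correct and matches the cited source; there is simply nothing in the present paper to compare it to.
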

\begin{rem}\label{rem:decomposable}
We will say that the Higgs bundle is \emph{decomposable} when it is a direct sum of more than one $G$-Higgs bundle.
Unlike the case of $G=GL(n,\C)$, stability does not imply indecomposability.
For example, it is shown in
\cite[Prop.\ 3.2]{ApaG} that if the decomposition above involves only Higgs subbundles for $SO_0(k,1)$ or $SO(l)$ with $l\neq 2$,
then $(V,Q_V,\phi)$ is stable. Clearly irreducible representations correspond precisely to indecomposable Higgs bundles. These
also provide smooth points in the moduli space of $SO_0(n,1)$-Higgs bundles (and hence in the character variety) by
Thm 5.5 and Cor 4.4 of \cite{ApaG}.
\end{rem}
The geometric meaning for $f$ of irreducibility of $\rho$ is the following. We say $f$ is \emph{linearly full}
if its image does not lie in a totally geodesic copy of $\RH^k$ in $\RH^n$ for some $k<n$.
\begin{lem}\label{lem:irred}
An equivariant minimal immersion $f:\caD\to \RH^n$ is linearly full if and only if the representation $\rho$ is irreducible.
\end{lem}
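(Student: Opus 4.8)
The plan is to reduce the statement to an elementary fact about the linear span of the image of $f$. Identifying $\RH^n$ with the pseudo-sphere $S^{n,1}\subset\R^{n,1}$ as above, set $U=\operatorname{span}_{\R}f(\caD)\subseteq\R^{n,1}$. Since $\h{f(z)}{f(z)}=-1$ for every $z$, the subspace $U$ contains a timelike vector, and because the ambient form has Lorentz signature this forces $U$ to be non-degenerate of signature $(\dim U-1,1)$. As every totally geodesic copy of $\RH^k$ in $\RH^n$ is cut out as $S^{n,1}\cap H$ for a linear subspace $H\subseteq\R^{n,1}$ of signature $(k,1)$, it follows that $f$ is linearly full if and only if $U=\R^{n,1}$. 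Everything after this is bookkeeping.

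The first implication is then immediate from equivariance. For $\delta\in\pi_1\Sigma$ one has $\rho(\delta)f(z)=f(c(\delta)z)\in f(\caD)$, so $\rho(\delta)$ maps $f(\caD)$ into $U$; hence $\rho(\delta)U\subseteq U$, and invertibility of $\rho(\delta)$ gives $\rho(\delta)U=U$. Thus $U$ is a non-zero $\rho$-invariant subspace, so if $\rho$ is irreducible then $U=\R^{n,1}$ and $f$ is linearly full.

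For the converse I would prove the contrapositive: if $\rho$ is reducible then $f$ is not linearly full. Since $f$ admits an equivariant harmonic map it must be reductive, so $\R^{n,1}$ splits as an orthogonal direct sum $\R^{n,1}=V_0\oplus P$ of $\rho$-invariant subspaces in which $P$ is a maximal positive-definite invariant subspace and $V_0=P^\perp$ has signature $(\dim V_0-1,1)$. A brief case analysis on a proper non-zero invariant subspace — the non-degenerate case uses only signature, and the case of an invariant isotropic line uses semisimplicity of $\rho$ together with $n\geq 3$ — shows that reducibility of $\rho$ forces $P\neq 0$. Write $f=f_{V_0}+f_P$ accordingly, so that $f_P\colon\caD\to P$ is again $\rho$-equivariant. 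The function $\h{f_P}{f_P}$ is non-negative (since $P$ is positive definite) and $c(\pi_1\Sigma)$-invariant, hence descends to the compact Riemann surface $\Sigma_c$. Using the harmonic map equation for maps into the pseudo-sphere, namely $\Delta f=e(f)f$ with $e(f)=|df|^2\geq 0$, one computes $\Delta\h{f_P}{f_P}=2e(f)\h{f_P}{f_P}+2|df_P|^2\geq 0$, so $\h{f_P}{f_P}$ is subharmonic on a compact surface and therefore constant. Then $\Delta f_P=0=e(f)f_P$, and as $e(f)>0$ because $f$ is an immersion, we conclude $f_P\equiv 0$. Hence $f(\caD)\subseteq V_0\subsetneq\R^{n,1}$, and by the first step $f$ is not linearly full.

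The only ingredient that is not pure linear algebra or the definition of equivariance is the subharmonicity computation, whose punchline is the maximum principle on the compact quotient. The step I expect to require the most care — though it remains essentially bookkeeping — is confirming that a reducible (reductive) $\rho$ genuinely yields a non-zero positive-definite invariant summand $P$ rather than only a degenerate invariant subspace; this is precisely where reductivity of $\rho$ and the hypothesis $n\geq 3$ are needed.
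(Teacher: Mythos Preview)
Your argument is correct and takes a genuinely different route from the paper. For the implication ``reducible $\Rightarrow$ not linearly full'' the paper invokes the Higgs bundle machinery: by Theorem~\ref{thm:stab} (from Aparicio and Garc\'ia-Prada) a reducible $\rho$ has decomposable Higgs bundle, exactly one summand carries the non-trivial Higgs field and has structure group $SO_0(k,1)$ for some $k<n$, and since the Higgs field encodes $df$ the map lands in the corresponding totally geodesic $\RH^k$. Your proof instead stays entirely on the harmonic-map side: reductivity of $\rho$ yields a non-trivial positive-definite invariant summand $P$, and the maximum principle applied to the subharmonic invariant function $\h{f_P}{f_P}$ forces $f_P\equiv 0$. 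This is more elementary and self-contained --- it avoids citing the polystability classification and would work verbatim for equivariant harmonic (not necessarily conformal) immersions --- whereas the paper's version is shorter in context and ties the lemma directly into the Higgs-bundle language used throughout. Your case analysis producing $P\neq 0$ is sound: in the degenerate case one gets an invariant isotropic line $\ell$, and complete reducibility gives an invariant complement to $\ell$ inside $\ell^\perp$, which inherits the positive-definite form on $\ell^\perp/\ell$; this already works for $n\geq 2$, so your appeal to $n\geq 3$ is not strictly needed there.
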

\begin{proof}
Clearly if $f$ is not linearly full its image lies in some copy of $S^{k,1}\subset \R^{k,1}\subset \R^{n,1}$, and this must
be preserved by $\rho$, hence $\rho$ is reducible. Conversely, suppose $\rho$ is reducible, then its Higgs bundle is
decomposable. Since the Higgs field is non-trivial there must be at least one (and therefore precisely one) indecomposable
subbundle with group $SO_0(k,1)$ for some $k<n$. The other Higgs bundle summands have compact group structures, and therefore
their Higgs fields are
trivial. Since the Higgs field represents $df$, the map $f$ takes values in the totally geodesic copy of $\RH^k$ which
corresponds to the $SO_0(k,1)$-Higgs summand.
\end{proof}
We can now describe more explicitly how the Higgs bundle data is related to the
classical minimal surface data, namely its induced metric $\gamma=f^*g$ and its second
fundamental form $\II$ (recall that $\II(X,Y) = (\nabla_XY)^\perp$ for $X,Y\in\Gamma(T\Sigma)$).
Given an immersion $f$ we have a smooth orthogonal decomposition
\begin{equation}\label{eq:normal}
V= T^\C\Sigma_c\oplus W
\end{equation}
where $W=(T^\perp\Sigma)^\C$. We will treat $\II$ as a $W$-valued complex bilinear form on $T^\C\Sigma$.
Then $f$ is minimal precisely when $\II^{1,1}=0$ in which case $\II$ is completely determined by $\II^{2,0}$.
By the Codazzi equations this is a holomorphic $W$-valued quadratic form on $\Sigma_c$
(see the appendix \ref{sec:GCR}). In the case $n=3$ it is essentially the Hopf differential (see \S \ref{sec:RH3} below).

Now we write $T^\C\Sigma_c=T^{1,0}\Sigma_c\oplus
T^{0,1}\Sigma_c$
and let $\gamma^\C$ denote the complex bilinear extension of the metric. This gives the orthogonal structure
on $T^\C\Sigma_c$, for which both $T^{1,0}\Sigma_c$ and $T^{0,1}\Sigma_c$ are isotropic. Let
$\hat\gamma:T^{0,1}\Sigma_c\to K$ denote the isomorphism $\bar Z\to \gamma^\C(\cdot,\bar Z)$. It has inverse
\begin{equation}\label{eq:hatgamma}
\hat\gamma^{-1}:K\to T^{0,1}\Sigma_c;\quad dz\mapsto \bar Z/\|\bar Z\|^2_\gamma,
\end{equation}
whenever $dz(Z)=1$. In particular, we obtain an isomorphism $T^\C\Sigma\simeq K^{-1}\oplus K$ for which the
orthogonal structure
$\gamma^\C$ makes both $K^{-1},K$ isotropic and pairs them canonically. Thus we have a smooth isomorphism
\begin{equation}\label{eq:Vdecomp}
V=(f^{-1}T\RH^n)^\C\simeq K^{-1}\oplus W\oplus K.
\end{equation}
The complex bilinear extension of the metric on $T\RH^n$ provides the orthogonal structure $Q_W$ on $W$.
Let $\bpartial$ to denote the
holomorphic structure on
both $K^{-1}$ and $K$, and let $\bpartial_W$ denote the holomorphic structure on
$W$ coming from the connexion in the normal bundle.
Using the isomorphism $\hat\gamma$ above we associate $\II$ to a
$\Hom(K,W)$-valued $(0,1)$ form $\beta$, defined locally by
\begin{equation}\label{eq:beta}
\beta(\bar Z):dz\mapsto \II(\bar Z,\bar Z)/\|\bar Z\|_\gamma^2,
\end{equation}
for $Z=\partial/\partial z$.
\begin{thm}\label{thm:Higgs}
Let $[f,c,\rho]$ be an equivariant minimal surface in $\RH^n$, $n\geq 3$, with Higgs bundle $(V,Q_V,\phi)$. Then
$(V,Q_V,\phi)$ is given by the data $(W,Q_W,[-\beta^t])$ as in Lemma \ref{lem:tr=0}, where $W$ is the complexified normal
bundle, $Q_W$ is the complex bilinear extension of the normal bundle metric,
$\beta^t$ is the adjoint of $\beta\in\caE^{0,1}_{\Sigma_c}(\Hom(K,W))$ with respect to $Q_V$, and $[-\beta^t]\in
H^1(\Hom(W,K^{-1}))/SO(Q_W)$ is obtained via the Dolbeault isomorphism.

Conversely, if the Higgs bundle determined by $(W,Q_W,[\xi])$ is stable and indecomposable
then it determines a unique linearly full equivariant minimal immersion $[f,c,\rho]$.
\end{thm}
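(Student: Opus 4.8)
The plan is to establish the theorem in two halves, matching its two assertions, and to lean heavily on the structure already set up in Lemma \ref{lem:tr=0} and on the Gauss--Codazzi--Ricci equations from the appendix. For the forward direction I would start from the geometric data $(\gamma,\II)$ of a given equivariant minimal immersion $[f,c,\rho]$ and the smooth decomposition $V=(f^{-1}T\RH^n)^\C\simeq K^{-1}\oplus W\oplus K$ from \eqref{eq:Vdecomp}, with $W=(T^\perp\Sigma)^\C$ and $Q_W$ the complexified normal metric. The first step is to identify the Higgs field: $\phi=\partial f$ is, under the identification of $T^{1,0}\caD$ with $K^{-1}$ via $\hat\gamma$, precisely the inclusion $K^{-1}\hookrightarrow V$ (its image is the tangential $(1,0)$ part of $f^{-1}T^\C\RH^n$), so it is nowhere-vanishing because $f$ is an immersion. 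Then $\phi^t\circ\phi=0$ holds because $g^\C(\partial f,\partial f)=0$ by conformality, which is exactly the condition from Lemma \ref{lem:tr=0}; consequently $(V,Q_V,\phi)$ is of the type classified there, with normal-bundle quotient $W$ and some class $[\xi]$. The crux of the forward direction is to show $\xi=-\beta^t$: here I would compute the holomorphic structure $\bpartial_V=\nabla^{0,1}$ in the splitting $K^{-1}\oplus W\oplus K$ and read off the off-diagonal $(0,1)$-form $\alpha_1\in\caE^{0,1}(\Hom(W,K^{-1}))$ that, by the proof of Lemma \ref{lem:tr=0}, represents the extension class of $V_{n-1}=\ker(\phi^t)$. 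The Levi-Civita connexion $\nabla$ on $f^{-1}T\RH^n$ decomposes, relative to the tangent/normal splitting, into the induced connexion, the normal connexion $\nabla^\perp$, and the two shape operators encoded by $\II$; taking the $(0,1)$-part and using \eqref{eq:beta} identifies the mixed block $W\to K^{-1}$ term (after transposing, since that block sits ``above'' $W$ in $V_{n-1}$) with $-\beta^t$. The sign and the transpose are exactly where one must be careful: the relation $\alpha_1=-\alpha_3^t$ forced by holomorphicity of $Q_V$ in Lemma \ref{lem:tr=0} means the $K\to W$ block $\alpha_3$ is $\beta$ itself (the $(0,1)$ form built from $\II^{2,0}$ via $\hat\gamma$), so $\alpha_1=-\beta^t$, and the Dolbeault class of $-\beta^t$ is $[-\beta^t]\in H^1(\Hom(W,K^{-1}))/SO(Q_W)$ as claimed. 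That $\xi$ is only well-defined modulo $SO(Q_W)$ is already built into the statement of Lemma \ref{lem:tr=0}.

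For the converse, suppose $(W,Q_W,[\xi])$ yields a Higgs bundle $(V,Q_V,\phi)$ that is stable and indecomposable. By Lemma \ref{lem:tr=0} it satisfies $\tr(\Phi^2)=0$, hence $\tr(\ad\Phi^2)=0$ (the two differ only by the explicit scalar relating the Killing form of $SO_0(n,1)$ to the trace form on the standard representation, as noted in \S2.1), and $\phi$ is nowhere-vanishing by construction. Stability implies polystability, so the non-abelian Hodge correspondence attaches to it a reductive $\rho$ and an equivariant harmonic map $f:\caD\to\RH^n$; since $\tr(\ad\Phi^2)=0$ and $\Phi\ne0$ everywhere, $f$ is a weakly conformal harmonic map with no branch points, i.e.\ an equivariant minimal immersion. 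Indecomposability of the Higgs bundle is equivalent to irreducibility of $\rho$ (Remark \ref{rem:decomposable}), which by Lemma \ref{lem:irred} is equivalent to $f$ being linearly full. Uniqueness of the triple $[f,c,\rho]$ up to the equivalence $\sim$ is the uniqueness half of the non-abelian Hodge correspondence (the harmonic metric is unique for a fixed polystable Higgs bundle over $\Sigma_c$, cf.\ \cite{Cor,Don}). Finally one checks consistency: applying the forward direction to the minimal immersion just produced recovers the data $(W,Q_W,[-\beta^t])$, and by the explicit reconstruction in Lemma \ref{lem:tr=0} this is isomorphic to the original $(V,Q_V,\phi)$; so the correspondence $(W,Q_W,[\xi])\leftrightarrow[f,c,\rho]$ is genuinely bijective onto its image.

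I expect the main obstacle to be the bookkeeping in the forward direction: pinning down precisely which off-diagonal block of $\bpartial_V=\nabla^{0,1}$ corresponds to the extension class of $V_{n-1}$ (as opposed to that of a different sub-flag), and then matching it to $\beta$ with the correct sign and adjoint. This requires writing $\nabla$ in the moving frame adapted to $K^{-1}\oplus W\oplus K$, using \eqref{eq:hatgamma}--\eqref{eq:beta}, and invoking the relation $\alpha_1=-\alpha_3^t$ from Lemma \ref{lem:tr=0}; the minimality hypothesis $\II^{1,1}=0$ is what guarantees the ``wrong-type'' blocks vanish and that $\beta$ is genuinely a $(0,1)$-form (equivalently, that $\alpha_2=0$ and $\bpartial_W$ is the normal holomorphic structure). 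Everything else — conformality $\Leftrightarrow\phi^t\phi=0$, immersion $\Leftrightarrow\phi$ nowhere zero, linear fullness $\Leftrightarrow$ indecomposability, and existence/uniqueness of $f$ — is either immediate or quotable from the results already in hand.
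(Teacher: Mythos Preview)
Your proposal is correct and follows essentially the same route as the paper's proof: block-decompose the Levi-Civita connexion relative to the tangent/normal splitting, pass to $K^{-1}\oplus W\oplus K$, use minimality $\II^{1,1}=0$ to kill the $\Hom(K^{-1},W)$ block (what the paper calls $\alpha$), and read off $\alpha_1=-\beta^t$ as the extension class; the converse is, as you say, just non-abelian Hodge theory plus Lemmas \ref{lem:tr=0} and \ref{lem:irred}. One small caution on your bookkeeping remark: the block that minimality kills is the $\Hom(K^{-1},W)$ piece $\alpha(\bar Z):Z\mapsto\II(Z,\bar Z)$, not the $\alpha_2\in\caE^{0,1}(\Hom(K,K^{-1}))$ block of Lemma \ref{lem:tr=0} (that one vanishes already from the tangent/normal orthogonality of $\nabla$), so keep the two notations straight when you write it up.
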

\begin{proof}
Recall that with respect to the orthogonal decomposition \eqref{eq:normal}
the Levi-Civita connexion can be block-decomposed as
\[
\nabla = \begin{pmatrix}
\nabla^\Sigma & -B^t\\ B & \nabla^\perp
\end{pmatrix}
\]
where $\nabla^\Sigma,\nabla^\perp$ denote respectively the induced connexions on the tangent bundle and the normal bundle, and
$B\in\caE^1_\Sigma(\Hom(T^\C\Sigma,W))$ represents the second fundamental form, i.e.,
$B(X):Y\to \II(X,Y)$. Its adjoint $B^t$ is with respect to the metric on $f^{-1}T\RH^n$.

It follows that
with respect to the decomposition \eqref{eq:Vdecomp} we can write
\[
\bpartial_V = \begin{pmatrix}
\bpartial & -\beta^t & 0\\ \alpha &\bpartial_W & \beta\\ 0&-\alpha^t &\bpartial
\end{pmatrix},\qquad
Q_V=\begin{pmatrix} 0&0&1\\0&Q_W&0\\1&0&0\end{pmatrix},
\]
where $\alpha\in\caE^{0,1}_{\Sigma_c}(\Hom(K^{-1},W))$ and $\beta\in\caE^{0,1}_{\Sigma_c}(\Hom(K,W))$ are
given locally by
\[
\alpha(\bar Z):Z\to \II(Z,\bar Z),\quad \beta(\bar Z):dz\to \II(\bar Z,\bar Z)/\|\bar
Z\|_\gamma^2.
\]
However, $\II(Z,\bar Z)=0$ since $f$ is minimal, hence $\bpartial_V$ is given by
\begin{equation}\label{eq:bpartialV}
\bpartial_V =
\begin{pmatrix}
\bpartial& -\beta^t & 0\\ 0&\bpartial_W &\beta \\ 0&0&\bpartial
\end{pmatrix},
\end{equation}
From the proof of Lemma \ref{lem:tr=0} it follows that $\xi=[-\beta^t]$.

The converse is just non-abelian Hodge theory together with Lemmas \ref{lem:tr=0} and \ref{lem:irred}.
\end{proof}
Note that the isomorphism $W\simeq W^*$ induced by $Q_W$ allows us to identify $\Hom(W,K^{-1})$ with
$\Hom(K,W)$, and this identifies $\beta^t$ with $\beta$. From now one we will make this identification.

Regarding the case where $[\beta]=0$ we make the following observation (cf. Theorem 3.1 of \cite{LofM15}).
\begin{prop}\label{prop:beta=0}
The class $[\beta]$ above is trivial if and only if $\II$ is identically zero, i.e., if and only if
the map $f$ is a totally geodesic embedding.
\end{prop}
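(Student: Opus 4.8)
The statement to prove is Proposition \ref{prop:beta=0}: the class $[\beta]\in H^1(\Hom(W,K^{-1}))/SO(Q_W)$ vanishes if and only if $\II\equiv 0$, i.e. $f$ is totally geodesic. The plan is to establish the two implications separately, exploiting the Dolbeault picture set up in Lemma \ref{lem:tr=0} and Theorem \ref{thm:Higgs}.

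The easy direction is $(\Leftarrow)$: if $\II\equiv 0$ then by \eqref{eq:beta} the $(0,1)$-form $\beta$ is identically zero as a smooth form, hence its Dolbeault class is zero, hence $[\beta]=0$. For the converse $(\Rightarrow)$, suppose $[\beta]=0$ in $H^1(\Hom(W,K^{-1}))/SO(Q_W)$. First I would note that the $SO(Q_W)$-action on cohomology classes fixes $0$ (it acts $\C$-linearly on $H^1(\Hom(W,K^{-1}))$), so $[\beta]=0$ already means $\beta$ is $\bpartial$-exact: there exists $\psi\in\caE^0(\Hom(K,W))$ with $\beta=\bpartial_W\psi-\psi\bpartial$ (the appropriate Dolbeault coboundary for $\Hom(K,W)\simeq\Hom(W,K^{-1})$ under $Q_W$). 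By the gauge-transformation computation in the proof of Lemma \ref{lem:tr=0} (with $g=\id$), applying the unipotent gauge transformation $T$ built from $\psi$ converts $\bpartial_V$ as in \eqref{eq:bpartialV} into the split form with $\beta$ and $-\beta^t$ replaced by $0$. Thus the holomorphic bundle $V$ splits holomorphically and orthogonally as $K^{-1}\oplus W\oplus K$, with $\phi:K^{-1}\to V$ the inclusion of the first summand.

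The crux is then to translate this holomorphic splitting back into vanishing of the second fundamental form. The key geometric fact is that $\phi=\partial f$ and the holomorphic structure $\bpartial_V$ is $(\nabla^E)^{0,1}$ restricted to $V$, where $\nabla$ is the pullback Levi--Civita connexion. The image of $\phi$ is the holomorphic line subbundle $T^{1,0}\Sigma_c\subset V$, and $\ker\phi^t=V_{n-1}=K^{-1}\oplus W$ holomorphically. Saying $V_{n-1}$ is a holomorphic subbundle with a holomorphic complement $K$ means $\nabla^{0,1}$ preserves both $V_{n-1}$ and its orthogonal complement; combined with the reality structure and the fact that $\nabla$ is metric, one gets that $\nabla$ itself preserves the orthogonal decomposition $T^{1,0}\Sigma_c\oplus(T^{0,1}\Sigma_c\oplus W)$, and similarly $W$ is a parallel subbundle. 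A parallel splitting of $f^{-1}T\RH^n$ into $T\Sigma$ and $T^\perp\Sigma$ is exactly the statement that the second fundamental form — the off-diagonal block $B$ of $\nabla$ in the proof of Theorem \ref{thm:Higgs} — vanishes, i.e. $\II\equiv 0$. Alternatively, and more concretely, one can read this straight off \eqref{eq:bpartialV}: once the gauge transformation has been used to kill $\alpha$-type and $\beta$-type terms simultaneously while preserving the interpretation of the blocks, the formulas $\alpha(\bar Z):Z\mapsto\II(Z,\bar Z)$ and $\beta(\bar Z):dz\mapsto\II(\bar Z,\bar Z)/\|\bar Z\|^2$ force all components of $\II$ to vanish. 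Finally, $\II\equiv 0$ for an equivariant immersion $f:\caD\to\RH^n$ gives a totally geodesic $f$, and since $\caD$ is complete this is a totally geodesic embedding onto a copy of $\RH^2$.

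I expect the main obstacle to be the middle step: arguing carefully that the holomorphic splitting produced by the gauge transformation is the \emph{same} smooth splitting \eqref{eq:Vdecomp} coming from $T\Sigma\oplus T^\perp\Sigma$, rather than merely some abstract holomorphic splitting of $V$. One must check that the unipotent gauge transformation $T$ (which is block-upper-triangular, hence preserves the flag $K^{-1}\subset V_{n-1}\subset V$) does not move the subbundle $\im\phi=T^{1,0}\Sigma_c$ nor the subbundle $W$; the upper-triangular shape makes the first automatic, and a short check handles $W$. Once the splitting is pinned down geometrically, the identification of the off-diagonal connexion block with $\II$ is precisely the content already recorded in the proof of Theorem \ref{thm:Higgs}, so the conclusion follows.
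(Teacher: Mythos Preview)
Your easy direction is fine, but the converse has a real gap at exactly the point you flagged as the ``main obstacle''. The unipotent gauge transformation $T$ from the proof of Lemma~\ref{lem:tr=0} does \emph{not} fix the smooth subbundle $W$: with $g=\id$ one has
\[
T\begin{pmatrix}0\\w\\0\end{pmatrix}=\begin{pmatrix}\psi w\\w\\0\end{pmatrix},
\]
so unless $\psi=0$ (which is precisely what you are trying to prove) the image $T(W)$ is a genuinely different smooth subbundle. What the gauge argument actually yields is only that $V$ admits \emph{some} $Q_V$-orthogonal holomorphic splitting $K^{-1}\oplus W'\oplus K'$ with $W'\simeq W$ and $K'\simeq K$; it does not show that the \emph{geometric} normal bundle $(T\Sigma^\perp)^\C$ is a $\bpartial_V$-holomorphic subbundle. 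But inspecting \eqref{eq:bpartialV}, the geometric $W$ is holomorphic in $V$ exactly when $\beta$ vanishes as a form, so one cannot read off $\II=0$ from the abstract splitting alone. Your fallback---``read this straight off \eqref{eq:bpartialV}''---commits the same error: after applying $T$ the block entries are computed in the \emph{new} smooth decomposition and no longer carry the interpretation as components of $\II$.

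The paper takes a completely different and much shorter route: it shows that $\beta$ is already the \emph{harmonic} representative of its Dolbeault class for the Hermitian metric on $\Hom(K,W)$ induced by the immersion. The point is that $\bar\star\beta$ is, up to a constant, $Q_W(\cdot,\II(Z,Z))\,dz$, and $\II^{2,0}$ is holomorphic by the Codazzi equation, so $\bpartial\bar\star\beta=0$. Hodge theory then gives $[\beta]=0\iff\beta=0\iff\II^{2,0}=0$, and minimality ($\II^{1,1}=0$) finishes it. Your line of attack could in principle be completed by arguing instead that the holomorphic splitting makes the Higgs bundle decomposable, hence $\rho$ reducible with an $SO_0(2,1)$ factor, and then invoking non-abelian Hodge and Lemma~\ref{lem:irred} to identify $f$ with a map into a totally geodesic $\RH^2$; but that is a substantially heavier detour than the harmonicity argument.
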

In particular, the case where $[\beta]=0$ corresponds precisely to those triples $[f,c,\rho]$ for which the representation $\rho$
factors through an embedding of $SO_0(2,1)$ into $SO_0(n,1)$. In this case $\rho$ is reducible: it is just the Fuchsian
representation $c$ post-composed with this embedding.
\begin{proof}
It suffices to show that there is a Hermitian metric $h$ on $\Hom(K,W)$ with respect to which $\beta$ is harmonic.
This metric is none other than the metric induced by the immersion, for with respect to that metric the Hodge star
\[
\caE^{0,1}(K^{-1}\otimes W)\stackrel{\bar\star}{\to}\caE^{1,0}(W^*\otimes K)
\]
maps $\beta(\bar Z)d\bz$ to $-ih(\cdot,\beta(\bar Z))dz$. Now using \eqref{eq:beta} and the definition of $Q_W$ we see that
\[
h(\cdot,\beta(\bar Z)) = Q_W(\cdot, \II(Z,Z)).
\]
But $\II(Z,Z)$ is holomorphic, and therefore $\bar\partial\bar\star \beta=0$. Hence $\beta$ is harmonic.
\end{proof}

Lemma \ref{lem:tr=0} and Theorem \ref{thm:Higgs} show we should be able to parametrise equivariant minimal immersions by their data
$([c],W,Q_W,[\beta])$. The
major difficulty in general is to identify in a satisfying way the conditions which ensure stability of the Higgs bundle.
In the cases where $n=3$ or $n=4$ the bundle $(W,Q_W)$ is simple enough that we can do this, and this is the purpose of the
remainder of the article.

\section{Equivariant minimal surfaces in $\RH^3$.}\label{sec:RH3}
Theorem \ref{thm:Higgs} leads very quickly to a description of the moduli space $\caM(\Sigma,\RH^3)$.
For in this case $f^{-1}T\RH^3/\pi_1\Sigma$ is $SO(3)$-bundle with decomposition into $T\Sigma\oplus
T\Sigma^\perp$. Since $f$ is an oriented immersion $T\Sigma^\perp$ is trivial. Therefore, as a smooth bundle,
\[
V \simeq K^{-1}\oplus 1\oplus K,
\]
with orthogonal structure
\[
Q_V = \begin{pmatrix} 0&0&1\\0&1&0\\ 1&0&0\end{pmatrix}.
\]
It is well-known that $\rho$ has associated with it a $\Z_2$ invariant, which we will denote by $w_2(\rho)$,
equal to the second Steifel-Whitney
class of the $SO(3)$-bundle associated to $(V,Q_V)$ (see, for example, \cite{ApaG}). In our case this $SO(3)$-bundle is the
bundle of oriented frames of $f^{-1}T\RH^3/\pi_1\Sigma$,
and the question is whether or not this lifts to a $\Spin(3)$-bundle.  The decomposition $T\Sigma\oplus T\Sigma^\perp$
gives a reduction of structure group to $SO(2)$, and therefore it does lift because the first Chern class of $T\Sigma$
is even. Hence $w_2(\rho)=0$.
\begin{rem}
If we had allowed the possibility that $f$ has branch points then these would occur on a divisor $D\subset\Sigma_c$, and the
splitting of $V$ would replace $K^{-1}$ by $K^{-1}(D)$. In that case
$w_2(\rho)=\deg(K^{-1}(D))\bmod 2=\deg(D)\bmod 2$.
\end{rem}

Let $\nu$ be the unit normal field compatible with the orientation of $f$ and recall that the \emph{Hopf differential} 
of a minimal surface in a
$3$-manifold is the quadratic holomorphic differental $q = \h{\II^{2,0}}{\nu}$. In that case the quantity
$\beta$ in Theorem \ref{thm:Higgs} can be written as $\bar q\otimes \hat\gamma^{-1}$.
Here we interpret $\hat\gamma\in \Gamma(\Sigma_c,K\bar K)$ (cf.\ \eqref{eq:hatgamma}).
Thus we can write the holomorphic structure for $V$ in the form
\begin{equation}\label{eq:V3}
\bpartial_V = \begin{pmatrix}
\bpartial & -\bar q\otimes\hat\gamma^{-1} & 0\\
0 & \bpartial & \bar q\otimes\hat\gamma^{-1}\\
0&0&\bpartial \end{pmatrix}.
\end{equation}
and the holomorphic structure
depends only upon
\[
[\bar q\otimes\hat\gamma^{-1}]\in H^{0,1}(\Sigma_c,K^{-1})\simeq H^1(\Sigma_c,K^{-1}).
\]
In particular, $SO(Q_W)\simeq 1$.
By Proposition \ref{prop:beta=0} this cohomology class is trivial if and only if $q=0$.

With respect to the smooth isomorphism
\[
E\simeq (K^{-1}\oplus 1\oplus K)\oplus 1
\]
the Higgs field $\Phi$ and orthogonal structure $Q_E$ have the form
\[
\Phi = \begin{pmatrix} 0&0&0&1\\0&0&0&0\\0&0&0&0\\0&0&1&0\end{pmatrix},\quad
Q_E = \begin{pmatrix} 0&0&1&0\\0&1&0&0\\ 1&0&0&0\\0&0&0&-1\end{pmatrix}.
\]
It is not hard to check that this Higgs bundle $(V\oplus 1,\Phi)$ is stable if and
only if $q\neq 0$, and that when $q=0$ it is polystable with decomposition
\[
(V_2\oplus 1,\Phi')\oplus (1,0),
\]
where $V_2= K^{-1}\oplus K$ and $\Phi'$ is obtained from $\Phi$ by striking out the
second row and second column.

Now non-abelian Hodge theory provides the converse: to every equivalence class of data
$(c,\xi)$, where $c$ is a marked conformal structure and $\xi\in H^1(\Sigma_c,K^{-1})$, we obtain an equivariant minimal
surface $[f,c,\rho]$ in $\RH^3$. This is determined only up to the equivalence of such triples above.
As a consequence we can equip the moduli space $\caM(\Sigma,\RH^3)$
with the structure of a complex manifold. Since
$H^1(\Sigma_c,K^{-1})$ is the tangent space to Teichm\"{u}ller space $\caT_g$ at $c$, we deduce the following.
\begin{thm}\label{thm:RH3}
The moduli space of equivariant linearly full minimal immersions into $\RH^3$
can be identified with the bundle of punctured tangent spaces over Teichm\"{u}ller space,
\[
\caM(\Sigma,\RH^3)\simeq \{X\in T\caT_g:X\neq 0\}.
\]
In particular, this gives it the structure of a non-singular connected complex manifold of complex dimension $6(g-1)$.
\end{thm}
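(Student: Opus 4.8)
The plan is to assemble the proof from the pieces already in place: Theorem~\ref{thm:Higgs}, Proposition~\ref{prop:beta=0}, the explicit stability analysis carried out just above for the rank-three case, and non-abelian Hodge theory. First I would note that by Theorem~\ref{thm:Higgs} every linearly full equivariant minimal immersion $[f,c,\rho]$ into $\RH^3$ is recorded by data $(W,Q_W,[\xi])$ with $W$ the complexified normal bundle; since $f$ is oriented and the normal bundle is a line bundle, $W\simeq 1$, $Q_W=1$, and $SO(Q_W)$ is trivial, so the data collapses to a single cohomology class $\xi=[-\beta^t]=[\bar q\otimes\hat\gamma^{-1}]\in H^1(\Sigma_c,K^{-1})$ together with the marked conformal structure $[c]$. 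Thus the assignment $[f,c,\rho]\mapsto([c],\xi)$ is well defined.

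Next I would check that this assignment is a bijection onto $\{([c],\xi):\xi\in H^1(\Sigma_c,K^{-1})\setminus\{0\}\}$. Injectivity follows because the Higgs bundle $(V,Q_V,\phi)$ is completely reconstructed from $([c],\xi)$ via \eqref{eq:V3} and the explicit $\Phi$, $Q_E$, and two equivariant minimal immersions with the same Higgs data over the same $\Sigma_c$ are equivalent by the uniqueness half of non-abelian Hodge theory (the uniqueness of the harmonic metric, already invoked for \eqref{eq:F}). For surjectivity, given any nonzero $\xi$ one forms the Higgs bundle as above; the computation sketched before the theorem shows it is stable precisely because $q\neq0$ (an isotropic $\phi^t$-killed subbundle would have to be the destabilising sub-line-bundle of the nontrivial extension $V_2$, forcing negative degree), hence indecomposable, hence by Lemma~\ref{lem:irred} and Theorem~\ref{thm:Higgs} it yields a linearly full equivariant minimal immersion mapping back to $([c],\xi)$. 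The excluded locus $\xi=0$ is exactly the totally geodesic case by Proposition~\ref{prop:beta=0}, which is not linearly full, so it is correctly omitted.

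Then I would promote this set-theoretic bijection to an identification of complex manifolds. The key input is that the spaces $H^1(\Sigma_c,K^{-1})$ fit together, as $[c]$ ranges over $\caT_g$, into a holomorphic vector bundle over $\caT_g$ which is canonically the holomorphic tangent bundle $T\caT_g$ (by Kodaira--Spencer / Serre duality, $H^1(\Sigma_c,K^{-1})\simeq H^0(\Sigma_c,K^2)^*$ is the tangent space to Teichm\"uller space at $[c]$, and this identification is holomorphic in families). Removing the zero section accounts for the nonvanishing of $\xi$, giving $\caM(\Sigma,\RH^3)\simeq\{X\in T\caT_g:X\neq0\}$. The complex structure on the left is the one coming from the complex analytic family of Higgs bundle moduli spaces over $\caT_g$ restricted to the subvariety $\tr(\ad\Phi^2)=0$; here this subvariety is cut out transversally and the family is visibly the total space of $T\caT_g$, so no singularities arise. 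Finally the dimension count: $\dim_\C\caT_g=3(g-1)$ by Riemann--Roch, so the total space has $\dim_\C=6(g-1)$, and connectedness is immediate since $T\caT_g$ minus its zero section is connected (the fibres $H^1(\Sigma_c,K^{-1})$ have complex dimension $3(g-1)\geq3$).

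The main obstacle is the family statement, i.e.\ verifying that the pointwise identifications $H^1(\Sigma_c,K^{-1})\simeq T_{[c]}\caT_g$ glue into a biholomorphism of the total spaces, and that the complex structure inherited from the Higgs-bundle side (through non-abelian Hodge theory fibrewise over $\caT_g$) agrees with the natural one on $T\caT_g$. I would handle this by appealing to the discussion in \S2.1 about $\caM(\Sigma,N)$ carrying the structure of a complex analytic family with fibres $\caM(\Sigma_c,N)$, specialised here to the fact that for $\RH^3$ the fibre $\caM(\Sigma_c,\RH^3)$ is literally the punctured vector space $H^1(\Sigma_c,K^{-1})\setminus\{0\}$ depending holomorphically on $[c]$, and that the extension-class description \eqref{eq:V3} is manifestly holomorphic in the parameters. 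Everything else is routine bookkeeping with the explicit matrices already displayed.
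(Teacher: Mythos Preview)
Your proposal is correct and follows essentially the same approach as the paper: the argument in \S\ref{sec:RH3} leading up to the theorem reduces $(W,Q_W,[\xi])$ to a single class $\xi\in H^1(\Sigma_c,K^{-1})$, checks stability iff $\xi\neq 0$ (equivalently $q\neq 0$), invokes non-abelian Hodge for the converse, and then identifies $H^1(\Sigma_c,K^{-1})$ with $T_{[c]}\caT_g$. You are somewhat more explicit than the paper about the family/complex-structure step, which the paper handles in one sentence, but this is the same proof.
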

Note that when $\caM(\Sigma,\RH^3)$ is completed by totally geodesic immersions ($q=0$), which comprise all the non-full minimal
immersions in this case, we obtain the full tangent space to Teichm\"{u}ller space. We will denote this completed
space by $\ocaM(\Sigma,\RH^3)$.
\begin{rem}
A generalization of Theorem \ref{thm:RH3} in which branch points are allowed was already known to Alessandrini \& Li \cite{AL}
using a slightly different approach. With that generality the problem is equivalent to finding a parametrisation of the
components of the nilpotent cone. This was essentially done many years earlier by Donagi, Ein \& Lazarsfeld in \cite{DEL},
albeit with some translation required to adapt their results for $GL(2,\C)$-Higgs bundles to $PSL(2,\C)$-Higgs bundles.
Recall that the non-abelian Hodge theory for this case is due entirely to Hitchin \cite{Hit87} and
Donaldson \cite{Don}.
\end{rem}
\begin{rem}\label{rem:n=3}
Taubes studied a similar moduli space to $\ocaM(\Sigma,\RH^3)$ in \cite{Tau}, which he called the moduli space $\caH$ of
\textit{minimal hyperbolic germs}. Each of these is a pair $(\gamma,\II)$ consisting of a metric on $\Sigma$ and a trace-free
symmetric bilinear form $\II$ which together satisfy the Gauss-Codazzi equations for a minimal immersion into a $3$-space of constant
negative curvature $-1$ (it is easy to show that these are the only two pieces of information needed for these equations).
He showed that his moduli space is smooth and has real dimension $12(g-1)$, the same as the real dimension of $T\caT_g$. Now,
the pair $(\gamma,\II)$ is all that is needed to construct the Higgs bundles above, and the Gauss-Codazzi equations are exactly
the zero curvature equations for the related connexion $\nabla^E$ (see appendix \ref{sec:GCR} below). It follows that there is
a bijection between $\ocaM(\Sigma,\RH^3)$ and $\caH$ which assigns to each equivariant minimal immersion its
metric and second fundamental form. Indeed, Taubes shows that there is a smooth map
$\caH\to \caR(\pi_1\Sigma,SO(3,\C))$, given by mapping $(\gamma,\II)$ to the (conjugacy class of the) holonomy of a flat connexion
which essentially plays the role of $\nabla^E$ after the isomorphism between $SO_0(3,1)$ and $SO(3,\C)$. Hence there is a
smooth map
\[
\caH \to \caT_g\times \caR(\pi_1\Sigma,SO(3,\C)),
\]
by taking the conformal class of $\gamma$ for the first factor. It should be possible to use Taubes' calculations to show
that the image can be smoothly identified with $\ocaM(\Sigma,\RH^3)$.
\end{rem}

\section{Equivariant minimal surfaces in $\RH^4$.}
For $n=4$ the normal bundle $T\Sigma^\perp$ is an $SO(2)$-bundle, and therefore comes with a canonical complex structure $J$
compatible with the orientation. Locally this is given by
\[
J\nu_1=\nu_2,\quad J\nu_2=-\nu_1,
\]
with respect to an oriented orthonormal local frame $\nu_1,\nu_2$ for the normal bundle.
It follows that the complexified normal bundle splits into a direct sum of line subbundles given by the eigenspaces of $J$.
Let $L\subset W$ be the line subbundle for eigenvalue $i$, then the eigenbundle for $-i$ is $\bar L$ which is smoothly
isomorphic to the dual $L^{-1}$ of $L$.
Therefore $W\simeq L\oplus L^{-1}$, and this is an orthogonal decomposition since $J$
is an isometry on the normal bundle. In fact this isomorphism is holomorphic, since $J$ is
parallel for the normal bundle connexion $\nabla^\perp$, from which $W$ gets its holomorphic structure, and 
these line subbundles are $\nabla^\perp$-invariant. 

Since $(T\Sigma^\perp,J)\simeq L$ as a complex line bundle, the Euler number of
$T\Sigma^\perp$ satisfies
\begin{equation}\label{eq:degL}
\chi(T\Sigma^\perp) = \deg(L).
\end{equation}
Note also that $Q_W$ is just the canonical pairing $L\times L^{-1}\to 1$.
The projections of $W$ onto $L$ and $L^{-1}$ given, respectively, by
\[
\sigma\mapsto \tfrac12(\sigma -iJ\sigma),\quad \sigma\mapsto \tfrac12(\sigma +iJ\sigma),
\]
are consequently holomorphic. In particular, define
\begin{equation}\label{eq:theta}
\theta_2= \tfrac12(\II^{2,0} -iJ\II^{2,0}),\quad \theta_1=\tfrac12(\II^{2,0} +iJ\II^{2,0}).
\end{equation}
These are holomorphic sections of $K^2L$ and $K^2L^{-1}$ respectively.

From Theorem \ref{thm:Higgs} the holomorphic structure on $V$ is
determined by the cohomology class of $\beta\in \caE^{0,1}(\Hom(K,W))$ given by \eqref{eq:beta}.
Since $W\simeq L\oplus L^{-1}$ holomorphically, we can
represent the holomorphic structure $\bpartial_V$ with respect to the smooth isomorphism
\begin{equation}\label{eq:V4decomp}
V\simeq K^{-1}\oplus L \oplus L^{-1}\oplus K,
\end{equation}
by
\begin{equation}\label{eq:V4hol}
\bpartial_V = \begin{pmatrix}
\bpartial & -\beta_2 & -\beta_1 & 0\\
0 & \bpartial_1 & 0&\beta_1\\
0 &0 & \bpartial_2 &\beta_2\\
0 &0 &0& \bpartial\\
\end{pmatrix},
\end{equation}
where $\bpartial_1,\bpartial_2$ are the holomorphic structures on $L,L^{-1}$ and $\beta_1,\beta_2$ are the components
of $\beta$ with respect to the splitting $W\simeq L\oplus L^{-1}$.
In particular,
\[
\beta_j(\bar Z):dz \mapsto \overline{\theta_j(Z,Z)}/\|Z\|^2_\gamma,
\]
with $\beta_1(\bar Z)$ taking values in $K^{-1}L$ and $\beta_2(\bar Z)$ taking values in $K^{-1}L^{-1}$.
The forms $-\beta_1,-\beta_2$ determine extension bundles
\[
0\to K^{-1}\to V_1\to L^{-1},\quad 0\to K^{-1}\to V_2\to L,
\]
respectively, which are holomorphic sububundles of $V$. It is easy to see that the only
$Q_V$-isotropic subbundles of $V$ are $V_1,V_2$ or subbundles of these. Now we note that since $(W,Q_W)$ are
represented by
\[
\bar\partial_W = \begin{pmatrix} \bar\partial_1 &0\\ 0& \bar\partial_2\end{pmatrix},\quad 
Q_W=\begin{pmatrix} 0&1\\1&0\end{pmatrix},
\]
the group $SO(Q_W)\simeq \Ct$ acts on $(\beta_1,\beta_2)$ by 
\begin{equation}\label{eq:t-action}
a\cdot(\beta_1,\beta_2) = (a\beta_1,a^{-1}\beta_2),\quad a\in\Ct.
\end{equation}
We will use $[\beta_j]$ to denote the extension class of $\beta_j$, but $[\beta]$ to denote the
element of
\[
(H^1(\Sigma_c,K^{-1}L)\oplus H^1(\Sigma_c,K^{-1}L^{-1}))/\Ct,
\]
which represents this $\Ct$-orbit of the pair $([\beta_1],[\beta_2])$.

Let us now consider the conditions under which a minimal immersion can be constructed from
an $SO_0(4,1)$-Higgs bundle of the type just described.
That is, we fix $L$ and $[\beta]$
and construct $(V,\bpartial_V,Q_V,\phi)$ as above to obtain an $SO_0(4,1)$-Higgs bundle $(E,\Phi)$.
We need to ascertain when this bundle is stable and indecomposable.
For the remainder of this section we assume that $\deg(L)\geq 0$.
\begin{lem}\label{lem:stab1}
Suppose that $\deg(L)\geq 1$. Then $(V,\bpartial_V,Q_V,\phi)$ gives a stable $SO_0(4,1)$-Higgs bundle if and only if
$\deg(L)<2(g-1)$ and every line subbundle of $V_2$ has negative degree. All such Higgs bundles are indecomposable.
In particular, $V_2$ is a non-trivial extension, i.e., $[\beta_2]\neq 0$.
\end{lem}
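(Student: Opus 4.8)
The plan is to use the stability criterion of Theorem \ref{thm:stab}: since $n=4>2$, the Higgs bundle is stable precisely when every $Q_V$-isotropic subbundle $W'\subset V$ with $\phi^t(W')=0$ has $\deg(W')<0$. As noted in the text just before the lemma, the only $Q_V$-isotropic subbundles of $V$ are $V_1$, $V_2$, and their subbundles; and $\phi^t$ kills exactly $V_{n-1}=\ker\phi^t$, which contains both $V_1$ and $V_2$. So stability is equivalent to: every subbundle of $V_1$ and every subbundle of $V_2$ has negative degree. First I would observe that a rank-one subbundle of $V_i$ is either contained in the sub-line-bundle $K^{-1}$ or projects isomorphically (generically) onto a subsheaf of the quotient line bundle ($L^{-1}$ for $V_1$, $L$ for $V_2$); combined with $\deg(K^{-1})=2-2g<0$, the only way stability can fail is through a line subbundle of $V_1$ or $V_2$ of non-negative degree, or through $V_1$ or $V_2$ itself if it has non-negative degree. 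Since $\deg(V_2)=\deg(K^{-1})+\deg(L)=2-2g+\deg(L)$, the condition $\deg(V_2)<0$ is exactly $\deg(L)<2(g-1)$; and $\deg(V_1)=2-2g-\deg(L)<0$ automatically when $\deg(L)\geq 0$. This already forces the bound $\deg(L)<2(g-1)$ as stated.

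Next I would deal with the line subbundles of $V_1$. The claim is that under the hypothesis $\deg(L)\geq 1$, every line subbundle of $V_1$ automatically has negative degree, so no condition on $V_1$ is needed beyond what we have. The key point: a line subbundle $M\subset V_1$ either lies in $K^{-1}$, in which case $\deg(M)\leq\deg(K^{-1})<0$, or it maps to a nonzero subsheaf of $L^{-1}$, giving $\deg(M)\leq\deg(L^{-1})=-\deg(L)\leq -1<0$. Hence stability reduces exactly to the two stated conditions: $\deg(L)<2(g-1)$, and every line subbundle of $V_2$ has negative degree (this last is automatic for subbundles landing in $K^{-1}$, so it is really a condition only on subbundles mapping onto $L$; such a subbundle of maximal degree, if it has degree $\geq 0$, would be a destabilizing splitting). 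In particular, if $[\beta_2]=0$ then $V_2\cong K^{-1}\oplus L$ splits holomorphically and the summand $L$ is a line subbundle of non-negative degree — destabilizing. So stability forces $[\beta_2]\neq 0$, i.e. $V_2$ is a non-trivial extension; this is the last assertion.

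It remains to prove indecomposability of any such stable Higgs bundle. Here I would argue along the lines of Remark \ref{rem:decomposable}: a decomposable polystable $SO_0(4,1)$-Higgs bundle is, by Theorem \ref{thm:stab}, a direct sum of $G$-Higgs bundles with $G\in\{SO_0(k,1),SO(k),U(k)\}$, and since the Higgs field $\phi$ is nonzero exactly one summand carries a nontrivial Higgs field and must be an $SO_0(k,1)$-Higgs bundle with $k<4$, the rest having compact structure group and zero Higgs field. Translating this back through the geometry (Lemma \ref{lem:irred}), the underlying minimal immersion would then be contained in a totally geodesic $\RH^k$ with $k\leq 3$, so its normal bundle in $\RH^4$ would contain a flat (trivial-degree) line subbundle as a holomorphic orthogonal summand; concretely, one of $L$ or $L^{-1}$ would split off $V$ as a $Q_V$-orthogonal holomorphic summand on which $\phi^t$ vanishes, giving an isotropic line subbundle of degree $\pm\deg(L)$ — but $\deg(L)\geq 1>0$ contradicts stability (the subbundle $L\subset V_2$ would be destabilizing). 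Hence no stable Higgs bundle of this type is decomposable.

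The main obstacle I anticipate is the bookkeeping in the second paragraph: one must be careful that "every line subbundle of $V_2$ has negative degree" is genuinely equivalent to stability and not merely necessary — i.e. that isotropic subbundles of rank $\geq 2$ (which here could only be $V_1$ or $V_2$ themselves, each of rank $2$) are handled, which is why the degree computations $\deg(V_1),\deg(V_2)<0$ are needed explicitly, and that the interaction between "subbundles of $V_2$" and the quotient structure $0\to K^{-1}\to V_2\to L\to 0$ is used correctly to see that the only dangerous subbundles are those surjecting onto $L$.
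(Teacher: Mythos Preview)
Your stability argument is correct and matches the paper's: both use Theorem~\ref{thm:stab} to reduce to checking $V_1$, $V_2$ and their line subbundles, compute $\deg(V_i)$ to extract the bound $\deg(L)<2(g-1)$, and then dispose of line subbundles of $V_1$ by the dichotomy ``either contained in $K^{-1}$ or mapping to $L^{-1}$'' (the paper phrases this as a general fact about extensions of line bundles, but it is the same argument). Your derivation of $[\beta_2]\neq 0$ is also correct; the paper leaves this implicit.

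The paper's written proof does not address indecomposability, so you are filling a gap. Your approach via Lemma~\ref{lem:irred} is legitimate and not circular (stability gives, via non-abelian Hodge, a genuine minimal immersion to which the lemma applies), but one step is imprecise: you write that ``one of $L$ or $L^{-1}$ would split off $V$ as a $Q_V$-orthogonal holomorphic summand''. This cannot happen, since $L$ and $L^{-1}$ are $Q_V$-isotropic and a $Q_V$-orthogonal summand must carry a non-degenerate form. The correct statement is that a decomposition produces a degree-zero holomorphic subbundle $V''\subset\ker\phi^t$ on which $Q_V$ is non-degenerate; since $Q_V$ has $K^{-1}$ in its radical on $\ker\phi^t=V_{n-1}$, such a $V''$ injects into $W=L\oplus L^{-1}$ with $Q_W$-non-degenerate image. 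If $\operatorname{rk}V''=1$, its image is a non-isotropic line in $W$, hence projects non-trivially to both $L$ and $L^{-1}$, forcing $\deg(V'')\leq-\deg(L)<0$, a contradiction. If $\operatorname{rk}V''=2$, then $V''\simeq W$ splits $V_{n-1}$, forcing $[\beta_2]=0$, contradicting stability. With this correction your argument goes through.
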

\begin{proof}
According to the stability condition in Theorem \ref{thm:stab}, we must ensure that every isotropic subbundle
of $\ker(\phi^t)$ has negative degree. As a smooth bundle $\ker(\phi^t)=K^{-1} \oplus L\oplus L^{-1}$ and the
isotropic subbundles of this are therefore $V_1,V_2$ and their subbundles. By the stability criteria of 
Theorem \ref{thm:stab} we require these to all have negative degree. Since 
$\deg(L)\geq 1$, $V_1,V_2$ both have negative degree if and only if $\deg(L)<\deg(K)$. 
Now for any extension bundle of line bundles
\[
0\to \caF_1\to \caF\to \caF_2\to 0
\]
if $\deg(\caF_2)>\deg(\caF_1)$ then every holomorphic line subbundle
has degree no greater than $\deg(\caF_2)$. Moreover there is a line subbundle of degree equal to $\deg(\caF_2)$ if and only
if the extension is trivial. 
Hence line subbundles of $V_1$ necessarily have negative degree when $\deg(L)>0$.
\end{proof}
\begin{lem}\label{lem:stab2}
Suppose that $\deg(L)=0$. Then $(V,\bpartial_V,Q_V,\phi)$ gives a stable $SO_0(4,1)$-Higgs bundle if and only if
both $[\beta_1],[\beta_2]$ are non-zero. It is also indecomposable except when $L\simeq 1$ and $[\beta_1]=[\beta_2]$.
\end{lem}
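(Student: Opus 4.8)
The plan is to establish the two assertions—stability, and (in)decomposability—separately, in both cases by enumerating the holomorphic subbundles of $\ker\phi^t$. For stability I would apply the criterion of Theorem \ref{thm:stab}: it is enough to see that every $Q_V$-isotropic subbundle of $\ker\phi^t$ has negative degree. Since $\im\phi\cong K^{-1}$ is the radical of $Q_V$ on $\ker\phi^t$ and $\ker\phi^t/K^{-1}\cong W=L\oplus L^{-1}$, any isotropic subbundle projects to a $Q_W$-isotropic subbundle of $W$, and the only such are $0$ together with $L$, $L^{-1}$ and their sub-line-bundles; hence the isotropic subbundles of $\ker\phi^t$ are $K^{-1}$, the preimages $V_1,V_2$ of $L^{-1}$ and $L$, and line subbundles of these. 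When $\deg L=0$ one has $\deg K^{-1}=\deg V_1=\deg V_2=2-2g<0$, so only a line subbundle of some $V_i$ could obstruct stability; but in the extension $0\to K^{-1}\to V_2\to L\to 0$ such a subbundle either lies in $K^{-1}$ (degree $\le 2-2g$) or projects nontrivially to $L$, in which case its degree is $\le 0$ with equality precisely when the extension splits, i.e.\ precisely when $[\beta_2]=0$; symmetrically for $V_1$ and $[\beta_1]$. This gives the first claim.

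For decomposability I would first reduce to a property of $\ker\phi^t$. Because $\phi\neq0$, any nontrivial orthogonal $\Phi$-invariant splitting $E=(A\oplus 1)\oplus B$ has $\im\phi\subset A$ and $B\subset\ker\phi^t$ with $B$ nondegenerate for $Q_V$; conversely a proper nonzero nondegenerate holomorphic subbundle $B\subset\ker\phi^t$ produces such a splitting with $A=B^\perp$ the $Q_V$-orthogonal complement. So the Higgs bundle is decomposable exactly when $\ker\phi^t$ admits such a $B$. As $K^{-1}$ is the radical, $B$ meets it trivially and maps isomorphically onto a nondegenerate subbundle $B'\subset W$; if $\rk B=2$ then $B'=W$, so $B$ splits $0\to K^{-1}\to\ker\phi^t\to W\to 0$, forcing $[\beta_1]=[\beta_2]=0$ against stability. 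Hence $\rk B=1$. Moreover, since stability of the $SO_0(4,1)$-Higgs bundle implies polystability, hence reductivity of $\rho$, the underlying rank-five bundle is a polystable bundle of degree zero, which forces the summand $B$ to have degree $0$; so I am reduced to asking when $W=L\oplus L^{-1}$ carries a nondegenerate degree-zero line subbundle $M'$ lifting holomorphically into $\ker\phi^t$.

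At this point nondegeneracy forces both projections $M'\to L$ and $M'\to L^{-1}$ to be nonzero, hence isomorphisms of degree-zero line bundles, so $M'\cong L\cong L^{-1}$; the corresponding line-bundle summand $B\cong L$ is one of the types permitted in Theorem \ref{thm:stab} exactly when it is trivial, i.e.\ $L\simeq 1$ (for a nontrivial $2$-torsion $L$ the would-be splitting is by an orthogonal pair with nontrivial determinant, so the $SO_0(4,1)$-Higgs bundle remains indecomposable in this sense). Taking $L\simeq 1$ and writing the inclusion $M'\hookrightarrow W$ as $(1,c)$ with $c\in\Ct$, the extension of $M'$ by $K^{-1}$ pulled back from $\ker\phi^t$ has class a nonzero multiple of $\beta_2+c\,\beta_1\in H^1(K^{-1})$, and this vanishes for some $c$ exactly when $[\beta_1]$ and $[\beta_2]$ are proportional; since every scalar is a square, that is exactly the condition that the $\Ct$-orbit \eqref{eq:t-action} of $(\beta_1,\beta_2)$ contains a pair with $[\beta_1]=[\beta_2]$. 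Assembling these steps gives the stated characterisation.

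The hard part will be the last step: computing the obstruction to the holomorphic lift (the relevant restriction map on $H^1$) and translating the resulting proportionality condition into the language of the $\Ct$-action \eqref{eq:t-action}, together with the care needed to rule out all other decomposition patterns—most notably that no $SO(2,\C)$- or $U(1)$-summand can occur, since such a summand would contribute a degree-zero isotropic subbundle of $\ker\phi^t$ and hence contradict stability, and that the degree of the line-bundle summand is genuinely forced to vanish rather than merely assumed.
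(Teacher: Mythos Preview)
Your stability argument is exactly what the paper has in mind when it says ``similar reasoning to the proof of the previous lemma.'' For indecomposability, however, you take a genuinely different route. The paper argues \emph{geometrically}: once stability is known, non-abelian Hodge theory produces a minimal immersion $f$, and Lemma~\ref{lem:irred} converts decomposability of the Higgs bundle into non-fullness of $f$; since $V_1,V_2$ are nontrivial extensions $f$ cannot land in $\RH^2$, so it lands in some $\RH^3$, the trivial normal direction $\nu_2$ forces $A_2=\langle\II(Z,Z),\nu_2\rangle=0$, and a direct computation with $\theta_1,\theta_2$ then shows $\beta_1=\beta_2$ under the induced trivialisation of $L$. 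Your approach is purely Higgs-bundle-theoretic: you classify the nondegenerate holomorphic subbundles $B\subset\ker\phi^t$, rule out rank two, and analyse the rank-one case by computing the obstruction to lifting a nondegenerate line $M'\subset W$.

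Both arguments are correct. Yours is self-contained (no appeal to the harmonic map) and is more explicit about why $L$ must be trivial rather than merely $2$-torsion: the complementary summand $B^\perp$ must have trivial determinant to be an $SO_0(3,1)$-Higgs bundle, and this forces $B\simeq 1$. The paper's argument is shorter but leaves that step implicit in the assertion $V''\simeq 1$. One small simplification you might make: you do not need polystability of the underlying bundle to see $\deg B=0$; a rank-one $B$ with $Q_V|_B$ nondegenerate is self-dual, so its degree vanishes immediately.
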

\begin{proof}
That $[\beta_1]$, $[\beta_2]$ are both non-zero follows from similar reasoning to the proof of the previous lemma. Now
suppose $(E,\Phi)$ is decomposable. From Lemma \ref{lem:irred} $f$ maps into either a copy of $\RH^2$ or $\RH^3$.
Since the extensions $V_1,V_2$ are non-trivial it must be the latter. Thus
\[
(E,\Phi) = (V'\oplus 1,\Phi')\oplus (V'',0),
\]
where the first summand is a stable $SO_0(3,1)$-Higgs bundle and $V''\simeq 1$. Hence $W$ is trivial and therefore so is $L$.
In particular, the normal bundle $T\Sigma^\perp$ has a global  orthonormal frame $\nu_1,\nu_2$
for which $\nu_1$ is a unit normal to $f$ inside the tangent space to this $\RH^3$, and $\nu_2$ is a unit normal to the
$\RH^3$, so that $\nu_2$ is parallel. It follows that
\[
\h{\II(X,Y)}{\nu_2} = \h{\nabla_XY}{\nu_2} = -\h{Y}{\nabla_X\nu_2}=0.
\]
From \eqref{eq:theta} we have in general, locally,
\begin{equation}\label{eq:thetaj}
\theta_1(Z,Z) = \tfrac12(A_1-iA_2)(\nu_1+i\nu_2)dz^2,\quad \theta_2(Z,Z) = \tfrac12(A_1+iA_2)(\nu_1-i\nu_2)dz^2,
\end{equation}
where $A_j= \h{\II(Z,Z)}{\nu_j}$. So when $A_2=0$ we have
\[
\theta_1(Z,Z) = \tfrac12 A_1(\nu_1+i\nu_2)dz^2,\quad \theta_2(Z,Z) = \tfrac12 A_1(\nu_1-i\nu_2)dz^2.
\]
Now $L\simeq 1\simeq L^{-1}$ and the isomorphism identifies $\nu_1+i\nu_2$ with $\nu_1-i\nu_2$, therefore $\theta_1$ is
identified with $\theta_2$, hence $\beta_1$ is identified with $\beta_2$.
\end{proof}
To parametrise the space of pairs $([\beta_1],[\beta_2])$ which correspond to these stability conditions we need to introduce
some new spaces. For a fixed complex structure $\Sigma_c$ and integer $l$ satisfying $2(1-g)<l<2(g-1)$, let $\caV_{c,l}$ denote
the holomorphic vector bundle over $\Pic_l(\Sigma_c)$ (the moduli space of degree $l$ line bundles over $\Sigma_c$) whose fibre over
$L\in\Pic_l(\Sigma_c)$ is
$H^1(\Sigma_c,K^{-1}L^{-1})$. It is easy to check that, since $\deg(K^{-1}L^{-1})<0$, each fibre has positive dimension
\begin{equation}\label{eq:h1}
h^1(\Sigma_c,K^{-1}L^{-1})= 3(g-1)+l.
\end{equation}
This dimension is independent of the choice of $L$ and therefore Grauert's result \cite[section 10.5]{GR}
ensures that we do obtain a
holomorphic vector bundle. To each $\xi\in \caV_{c,l}$ we assign the extension bundle $0\to K^{-1}\to\xi\to L\to 0$ characterised
by it. Every line subbundle of $\xi$ has degree bounded above by $\deg(L)$, so there is a well-defined integer function
\[
\mu:\caV_{c,l}\to\Z,\quad \mu(\xi) = \max\{\deg(\lambda):\lambda\subset \xi\text{ a holomorphic line subbundle}\}.
\]
\begin{lem}\label{lem:maxdeg}
For $1\leq l <2(g-1)$ the set $\caV_{c,l}^0 = \{\xi\in\caV_{c,l}:\mu(\xi)<0\}$ is a non-empty Zariski open subvariety.
\end{lem}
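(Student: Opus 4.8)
The plan is to establish two things: that $\caV_{c,l}^0$ is Zariski open in $\caV_{c,l}$, and that it is non-empty. Openness is the easier half. The function $\mu$ is by construction upper semi-continuous in the Zariski topology: the condition $\mu(\xi)\geq d$, i.e., that $\xi$ contains a line subbundle of degree $\geq d$, is a closed condition on the total space of $\caV_{c,l}$ because it amounts to the non-vanishing of a section of an appropriate direct image sheaf (equivalently, the locus where a suitable map of coherent sheaves drops rank). Since every line subbundle of $\xi$ has degree $\leq\deg(L)=l$, the function $\mu$ takes only finitely many values, so $\{\mu<0\}=\{\mu\leq -1\}$ is the complement of the closed set $\{\mu\geq 0\}$, hence Zariski open. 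I would phrase this using the standard fact (as in Lange's work on line subbundles of ruled surfaces) that the maximal degree of a line subbundle is upper semicontinuous in families.

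For non-emptiness, I would exhibit a single $\xi\in\caV_{c,l}$ with $\mu(\xi)<0$, and it is enough to do this for one choice of $L\in\Pic_l(\Sigma_c)$ since fibrewise non-emptiness over one point of the base already gives a non-empty open subvariety of the total space. Choose $L$ generic of degree $l$. A line subbundle $\lambda\subset\xi$ of degree $d\geq 0$ either maps to zero in $L$ — forcing $\lambda\subset K^{-1}$, impossible since $\deg(K^{-1})=2(1-g)<0\leq d$ — or maps injectively into $L$ as a subsheaf, hence $\lambda=L(-D)$ for an effective divisor $D$ of degree $l-d$; moreover the extension class $\xi\in H^1(K^{-1}L^{-1})$ must lie in the kernel of the natural map $H^1(K^{-1}L^{-1})\to H^1(K^{-1}\lambda^{-1})=H^1(K^{-1}L^{-1}(D))$ induced by the inclusion $K^{-1}L^{-1}\hookrightarrow K^{-1}L^{-1}(D)$ (this is precisely the condition that the extension splits off the sub-line-bundle $\lambda$). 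Thus $\mu(\xi)\geq 0$ forces $\xi$ to lie in the union, over all $0\leq d\leq\min(l,2g-2)$ and all effective $D$ of degree $l-d$, of the subspaces $\ker\bigl(H^1(K^{-1}L^{-1})\to H^1(K^{-1}L^{-1}(D))\bigr)$.

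The heart of the argument is therefore a dimension count showing this union is a proper subvariety of $H^1(K^{-1}L^{-1})$. For fixed $D$ of degree $m=l-d\geq 1$, the kernel above has dimension $h^1(K^{-1}L^{-1})-h^1(K^{-1}L^{-1}(D))$ if the restriction map $H^0(KL|_D)$-type sequence behaves as expected; by the long exact sequence associated to $0\to K^{-1}L^{-1}\to K^{-1}L^{-1}(D)\to (K^{-1}L^{-1}(D))|_D\to 0$ and Serre duality, for generic $D$ this kernel has codimension exactly $m$ in $H^1(K^{-1}L^{-1})$ (using $h^0(K^{-1}L^{-1}(D))=0$ when $\deg(K^{-1}L^{-1}(D))=2(1-g)+l-d<0$, which holds whenever $d>l-2(g-1)$, and in the remaining low range one checks genericity of $L$ kills the $h^0$). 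The divisors $D$ of degree $m$ form a family of dimension $m$, so the union over all such $D$ sweeps out a set of dimension at most $(\dim H^1(K^{-1}L^{-1}) - m) + m = \dim H^1(K^{-1}L^{-1})$ — which is not yet a proper subvariety. The point I would need to push harder on is that the generic such $D$ contributes a kernel of codimension strictly greater than $m$, or equivalently that the incidence variety $\{(\xi,D):\xi\text{ splits off }L(-D)\}$ has dimension strictly less than $\dim\caV_{c,l}$; this follows because $h^1(K^{-1}L^{-1}(D))\geq h^1(K^{-1}L^{-1})-m+1$ generically when $l<2(g-1)$ forces at least one ``wasted'' condition. I expect this final inequality — pinning down that the count is strict, not just non-strict — to be the main obstacle, and I would handle it by working on the Picard variety: the locus of $(\lambda,\text{extension})$ with $\lambda\subset\xi$, $\deg\lambda\geq 0$, fibred over the $(l{-}d)$-fold symmetric product and over $\Pic_d$, has dimension at most $d + (l-d) + (h^1(K^{-1}L^{-1}) - (l - d) - 1) < h^1(K^{-1}L^{-1})$ precisely because the generic extension of $L(-D)$ by the quotient is non-split, the ``$-1$'' being exactly the codimension of the split extensions. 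Tracing constants with $\deg(K^{-1}L^{-1})=2(1-g)+l$ and the hypothesis $l\leq 2(g-1)-1$ then gives the strict inequality, so $\caV_{c,l}^0$ is a non-empty Zariski open subvariety.
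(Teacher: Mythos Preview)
Your overall strategy is sound and is essentially the same as the paper's: openness via upper semi-continuity of $\mu$ (the paper phrases this as lower semi-continuity of $s(\xi)=c_1(\xi)-2\mu(\xi)$), and non-emptiness via a dimension count on the locus $\{\mu(\xi)\geq 0\}$ inside a single fibre $H^1(\Sigma_c,K^{-1}L^{-1})$. The paper packages the count using the secant variety language of Lange--Narasimhan: after twisting by $K$ and identifying $H^1(K^{-1}L^{-1})\simeq H^0(K^2L)^*$, the condition $\mu(\xi)\geq 0$ is exactly $[\xi]\in\Sec_l(\Sigma_c)$, which has dimension $2l-1$, strictly less than $l+3(g-1)-1$. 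Your direct Ext-kernel approach unwinds the same count without naming secant varieties.

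However, there is a genuine computational error that derails your argument. The kernel of
\[
H^1(K^{-1}L^{-1})\longrightarrow H^1(K^{-1}L^{-1}(D))
\]
has \emph{dimension} $m=\deg D$, not codimension $m$. Indeed, from $0\to K^{-1}L^{-1}\to K^{-1}L^{-1}(D)\to\caO_D\to 0$ one gets $h^0(K^{-1}L^{-1}(D))=0$ (its degree is $2-2g-d<0$ for $d\geq 0$), so the connecting map $\C^m\simeq H^0(\caO_D)\hookrightarrow H^1(K^{-1}L^{-1})$ is injective and its image is exactly the kernel. With this correction the count is immediate: for each $d\in\{0,\ldots,l\}$ the effective divisors $D$ of degree $m=l-d$ form an $m$-dimensional family, and the union of the corresponding $m$-dimensional kernels has dimension at most $2m\leq 2l$. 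Since $2l<3(g-1)+l=h^1(K^{-1}L^{-1})$ whenever $l<3(g-1)$, and in particular for $l<2(g-1)$, the bad locus is a proper subvariety and you are done. Your entire final paragraph (the search for a ``$-1$'', the invocation of $\Pic_d$ as an extra parameter, the appeal to genericity of $L$) is attempting to repair a problem that does not exist, and the patch itself is not correct: $\lambda=L(-D)$ is determined by $D$, so there is no independent $\Pic_d$-parameter, and the claimed ``$-1$'' has no justification.
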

Clearly $\caV_{c,l}^0$ is preserved by the scaling action of $\Ct$ since $\mu(\xi)$ depends only on the isomorphism class
of the bundle $\xi$. The proof of this lemma follows from Prop.\ 1.1 in \cite{LanN}, but we defer this to an appendix to 
avoid digression.

Now for $1\leq l <2(g-1)$ let $\caM(\Sigma_c,\RH^4)_l$ denote the moduli space of equivariant minimal immersions
for fixed Fuchsian representation $c$ and whose normal bundle has Euler number $l$. By Lemma \ref{lem:stab1} 
this set is parametrised by the variety
\[
\caW_{c,l}=\{(L,\xi_1,\xi_2)\in\iota^*\caV_{c,-l}\oplus\caV_{c,l}: \xi_2\in \caV_{c,l}^0\}/\Ct,
\]
where $\iota:\Pic_l(\Sigma_c)\to\Pic_{-l}(\Sigma_c)$ maps $L$ to $L^{-1}$ and the action of $\Ct$ is
\begin{equation}\label{eq:Ct}
a\cdot (L,\xi_1,\xi_2) = (L,a\xi_1,a^{-1}\xi_2).
\end{equation}
This action is free since $\xi_2\neq 0$, hence the quotient is non-singular. For $2(1-g)<l<0$ define
\[
\caW_{c,l}=\{(L,\xi_1,\xi_2)\in\iota^*\caV_{c,-l}\oplus\caV_{c,l}: \xi_1\in \iota^*\caV_{c,-l}^0\}/\Ct.
\]
This variety parametrises $\caM(\Sigma_c,\RH^4)_l$ for $l<1$. 
Finally, let $\caV_{c,0}^+$ denote the bundle $\caV_{c,0}$ without its zero section. By Lemma \ref{lem:stab2} the variety
\[
\caW_{c,0} = \{(L,\xi_1,\xi_2)\in \iota^*\caV_{c,0}^+\oplus\caV_{c,0}^+,\ \C.\xi_1\neq \C.\xi_2\}/\Ct,
\]
parametrises $\caM(\Sigma_c,\RH^4)_0$. 

Now consider the variation of $c$ over Teichm\"{u}ller space $\caT_g$.
By the same reasoning as in \cite[\S 6]{LofM15}, each family
\[
\caV_l=\cup_{c\in\caT_g} \caV_{c,l},
\]
is a complex analytic family over $\caT_g$.
Each $\caV_l$ is clearly a connected complex manifold, and by \eqref{eq:h1} each has dimension $10g-9$. 
Let $\caW_l$ denote the corresponding family over $\caT_g$ with
fibres $\caW_{c,l}$.
Then $\caW_l$ must also be connected, since each fibre is the quotient of a Zariski open subvariety in a vector space,
and of dimension $10g-10$.

In summary, for $N=\RH^4$ we have a bijective map
\begin{equation}\label{eq:caF}
\caF:\bigcup_{|l|<2(g-1)}\caW_l\to \caM(\Sigma,N),
\end{equation}
which assigns to each point $(c,L,[\xi])$ an indecomposable $SO_0(4,1)$-Higgs bundle, and therefore a
point $[f,c,\rho]$ in $\caM(\Sigma,N)$. We claim that $\caF$ is a diffeomorphism, and therefore the connected components of
$\caM(\Sigma,N)$ are given by $\caF(\caW_l)$. To prove this, let $\caF_l$ denote the restriction of $\caF$ to $\caW_l$.
\begin{thm}\label{thm:RH4}
Set $N=\RH^4$. For each $2(1-g)<l<2(g-1)$ the map $\caF_l:\caW_l\to \caM(\Sigma,N)$ is an injective local diffeomorphism. Hence 
we can smoothly identify $\caM(\Sigma,N)$ with the disjoint union $\cup_l \caW_l$. Consequently, $\caM(\Sigma,N)$ can be given the
structure of a non-singular complex manifold of dimension $10(g-1)$ with $4g-5$
connected components, indexed by the Euler number of the normal bundle $\chi(T\Sigma^\perp)$ with $|\chi(T\Sigma^\perp)|<2(g-1)$.
\end{thm}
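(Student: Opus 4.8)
The plan is to upgrade the bijection $\caF$ of \eqref{eq:caF} to a diffeomorphism by a standard two-part argument: (i) show $\caF_l$ is a local diffeomorphism by computing differentials, and (ii) combine this with the known injectivity of $\caF$ (which follows from non-abelian Hodge theory plus Lemmas \ref{lem:tr=0}, \ref{lem:irred}, \ref{thm:Higgs}) to conclude that $\caF_l$ is a diffeomorphism onto its image, and that distinct $\caW_l$ map to disjoint pieces since the Euler number $\chi(T\Sigma^\perp)=\deg(L)=l$ is a topological invariant that is constant on each $\caW_l$ and recovered from the minimal immersion. Once $\caF_l$ is a diffeomorphism onto an open subset, the component count and the dimension $10(g-1)$ follow: $\caW_l$ is connected of complex dimension $10g-10$ (established just before the theorem statement via \eqref{eq:h1} and the free $\Ct$-action), and $l$ ranges over the $4g-5$ integers with $|l|<2(g-1)$.

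For the local-diffeomorphism step I would proceed as follows. First, $\caW_l$ is a non-singular complex manifold: the total space $\caV_l$ is a complex analytic family over $\caT_g$ by the argument of \cite[\S 6]{LofM15}, the subvarieties $\caV_{c,l}^0$ are Zariski-open (Lemma \ref{lem:maxdeg}), and the $\Ct$-action in \eqref{eq:Ct} is free because $\xi_2\neq 0$ (resp.\ $\xi_1\neq 0$, resp.\ $\C.\xi_1\neq\C.\xi_2$ in the $l=0$ case). Second, $\caM(\Sigma,N)$ carries a real-analytic structure from the embedding $F$ of \eqref{eq:F}, and by the non-abelian Hodge correspondence over a varying conformal structure, a neighbourhood of $[f,c,\rho]$ is modelled on a neighbourhood in the corresponding Higgs bundle family intersected with $\{\tr(\ad\Phi^2)=0\}$. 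The map $\caF_l$ sends $(c,L,[\xi])$ precisely to the Higgs bundle built in \eqref{eq:V4hol}; so it suffices to check that this assignment is a local biholomorphism onto the relevant locus of the Higgs moduli space. Since both sides have the same (complex) dimension $10g-10$ — the Higgs side being the indecomposable, $\tr(\ad\Phi^2)=0$ locus, computed as in \S 2 — it is enough to show the differential of $\caF_l$ is injective. This is a deformation-theoretic computation: an infinitesimal deformation of $(c,L,[\xi])$ that fixes the isomorphism class of $(E,\Phi)$ must fix $c$ (the underlying conformal structure is intrinsic to $[f,c,\rho]$), hence fix the holomorphic structure on the base Riemann surface; it must then fix $W\simeq L\oplus L^{-1}$ as the complexified normal bundle (Theorem \ref{thm:Higgs}), hence fix $L$ up to the residual $\Ct$; and finally it must fix $[\beta]=[-\beta^t]$ as the extension class, again up to the $\Ct$-action \eqref{eq:t-action}, which is exactly the identification already made in forming $\caW_l$. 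So $d\caF_l$ is injective, and by equality of dimensions it is an isomorphism.

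The main obstacle I expect is making the deformation-theory matching precise: one must verify that the tangent space to $\caW_l$ at $(c,L,[\xi])$ maps isomorphically onto the Zariski tangent space of the Higgs moduli space at $(E,\Phi)$ restricted to the subvariety $\{\tr(\ad\Phi^2)=0\}$, and that the latter is smooth of the expected dimension at indecomposable points. The smoothness at indecomposable (hence simple, by Remark \ref{rem:decomposable}) $SO_0(4,1)$-Higgs bundles is guaranteed by Thm 5.5 and Cor 4.4 of \cite{ApaG}; the dimension count for the $\tr(\ad\Phi^2)=0$ slice requires checking that this single equation cuts the dimension by exactly $3g-3=\dim H^0(K^2)$ transversally along our locus, which one sees from the explicit block form of $\Phi$ — the condition $\phi^t\circ\phi=0$ is automatically satisfied by construction, so the slice is genuinely a component of the nilpotent cone and the tangent space is cut out cleanly. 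A secondary technical point is the $l=0$ case, where $\caW_{c,0}$ omits the diagonal $\C.\xi_1=\C.\xi_2$ (corresponding to decomposable Hodge bundles mapping into $\RH^3$, per Lemma \ref{lem:stab2}); here one checks separately that the removed locus is a closed subvariety of positive codimension, so its complement is still a connected manifold, and that $\caF_0$ remains an injective local diffeomorphism on what remains. Granting these, the proof assembles immediately: $\caF=\bigsqcup_l\caF_l$ is a bijective local diffeomorphism, hence a diffeomorphism, the complex structures on the $\caW_l$ transport to $\caM(\Sigma,\RH^4)$, and the components are enumerated by $l$ with $|l|<2(g-1)$.
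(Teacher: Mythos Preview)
Your overall architecture is the same as the paper's, but the step you label ``deformation-theoretic computation'' is where the actual work lives, and your version of it is not a computation --- it is a restatement of set-theoretic injectivity in infinitesimal language. Saying ``an infinitesimal deformation that fixes the isomorphism class of $(E,\Phi)$ must fix $c$, then $L$, then $[\beta]$'' is just the assertion that the inverse of $\caF_l$ is well-defined on points; it does not show that a tangent vector in $\ker(d\caF_l)$ vanishes. What needs to be proved is that if a one-parameter family of data $(\bar\partial_L+t\alpha,\ \beta_1+t\eta_1,\ \beta_2+t\eta_2)$ produces a family of Higgs bundles that is infinitesimally gauge-trivial at $t=0$, then $(\alpha,\eta_1,\eta_2)$ is already tangent to the quotient by the $\Ct$-action and the Dolbeault coboundaries. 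The paper does exactly this: it writes the first-order variation as an $\End(V)$-valued $(0,1)$-form $A$, classifies the gauge transformations $g$ that preserve both $Q_V$ and the block shape \eqref{eq:V4hol}, uses the fact that $K^{-2}$ has no non-zero holomorphic sections to force one of the off-diagonal pieces of $g$ to vanish, and then reduces \eqref{eq:A} to a $2\times 2$ block equation which is precisely the condition that the induced deformation of the extension $V_2$ (resp.\ $V_1$) is trivial. This is the content you are missing.

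Your auxiliary dimension-count argument also has a gap. You assert that the $\{\tr(\ad\Phi^2)=0\}$ locus is cut out transversally and hence has complex dimension $10(g-1)$, and you need this to promote ``$d\caF_l$ injective'' to ``$d\caF_l$ bijective''. But the nilpotent cone is singular in general, and saying ``$\phi^t\circ\phi=0$ is automatically satisfied by construction'' only tells you the image lies in the cone, not that the cone is smooth of the expected dimension there. The paper sidesteps this by not invoking a dimension match at all: it shows $d\psi$ is injective on the fibre $\caW_{c,l}$ directly via the gauge calculation, and combines this with the projection to $\caT_g$ to get injectivity of $d\caF_l$; the bijection $\caF$ then transports the manifold structure from $\caW_l$ to $\caM(\Sigma,N)$. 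If you want to keep your dimension strategy you would need an independent verification that the indecomposable part of the nilpotent cone in $\caH(\Sigma_c,SO_0(4,1))$ is smooth of complex dimension $7(g-1)$, which is more work than the direct gauge computation.
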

We can relate this structure to the Morse theoretic study of the moduli space of $SO_0(n,1)$-Higgs bundles in \cite{ApaG}. This
is done in \S \ref{sec:Morse} below, after we have identified which equivariant minimal immersions correspond to
Hodge bundles.
\begin{proof}
By construction we have taken the smooth structure of $\caM(\Sigma,N)$ from its inclusion in $\caT_g\times
\caR(\pi_1\Sigma,G)$. We know each $\caF_l$ is injective, so it suffices to show that $\ker(d\caF_l)$ is injective at each point.
By post-composing $\caF_l$ with the inclusion we can write $\caF_l=(\pi_l,\psi_l)$ where
\[
\pi_l:\caW_l\to\caT_g,
\]
is simply the fibration of $\caW_l$ over $\caT_g$, and
\[
\psi_l:\caW_l\to \caR(\pi_1\Sigma,G)\simeq \caH(\Sigma_c,G),
\]
assigns the Higgs bundle for fixed choice of $c$. It follows that $d\caF_l$ is injective precisely when $d\psi_l$ is injective
on the tangent spaces to the fibres $\caW_{c,l}$ of $\pi_l$. Therefore it suffices to show that for fixed $c,l$ the map
\begin{equation}\label{eq:psi}
\psi:\caW_{c,l}\to \caH(\Sigma_c,G),
\end{equation}
is an immersion at each point. So fix a point $(L,[\beta])\in \caW_{c,l}$ and a $\bpartial$-operator $\bpartial_L$ on $L$
which induces its holomorphic structure. Let
\[
\hat\caW_{c,l} = \caE^{0,1}(\C)\times \caE^{0,1}(LK^{-1})\times\caE^{0,1}(L^{-1}K^{-1}),
\]
and define
\[
\hat\psi:\hat\caW_{c,l}\to \caH(\Sigma_c,G),\quad (\alpha,\eta_1,\eta_2)\mapsto \psi(L_\alpha,[\eta]_\alpha),
\]
where $L_\alpha$ is $L$ with the holomorphic structure $\bpartial_L+\alpha$ and 
\[
[\eta]_\alpha= \{(a[\eta_1],a^{-1}[\eta_2]):a\in\Ct\},
\]
where $[\eta_j]$ denotes the Dolbeault cohomology class of $\eta_j$ with respect to $L_\alpha$. Now fix a representative
$(0,\beta_1,\beta_2)\in\hat\caW_{c,l}$ for $(L,[\beta])$. 
Clearly $\psi$ will be an immersion at $(L,[\beta])$ if
\begin{equation}\label{eq:dpsi}
\frac{d}{dt}\hat\psi(t\alpha,\beta_1+t\eta_1,\beta_2+t\eta_2) =0
\end{equation}
implies $(\alpha,\eta_1,\eta_2)$ is tangent to the fibre of the projection $\hat\caW_{c,l}\to \caW_{c,l}$ at the point
$(0,\beta_1,\beta_2)$.  Now let $\bpartial_V$ denote the operator in
\eqref{eq:V4hol}, then $\hat\psi(t\alpha,\beta_1+t\eta_1,\beta_2+t\eta_2)$ is the Higgs bundle determined by $\bpartial_V+tA$
where
\[
A=\begin{pmatrix}
0& -\eta_2&-\eta_1&0\\
0&\alpha &0&\eta_1\\
0&0&-\alpha&\eta_2\\
0&0&0&0\end{pmatrix},
\]
and neither the Higgs field nor the orthogonal structure $Q_V$ depend upon $t$ (the inclusion $K^{-1}\to V$ is
holomorphic for every $t$). Therefore the left hand side of \eqref{eq:dpsi}
is represented by the equivalence class of the $\End(V)$-valued $(0,1)$-form $A$
with respect to infinitesimal gauge transformations. This class is trivial if there is a curve of gauge transformations
$g(t)\in\Gamma(\End(V))$ which is $Q_V$-orthogonal, transforms operators of the shape \eqref{eq:V4hol} into operators of
the same shape, and satisfies
\begin{equation}\label{eq:A}
A = -g^{-1}\dot{g}g^{-1}\bpartial_V g + g^{-1}\bpartial_V\dot{g},
\end{equation}
where $\dot{g}=(dg/dt)(0)$. A straightforward calculation shows that the conditions on $g$ imply that, with respect to the smooth
decomposition \eqref{eq:V4decomp}, it has the form
\[
g=\begin{pmatrix}
1&-av&-a^{-1}u&0\\0&a&0&u\\0&0&a^{-1}&v\\0&0&0&1
\end{pmatrix},
\]
where $a$ is a smooth $\Ct$-valued function on $\Sigma_c$ and $u\in\Gamma(K^{-1}L)$, $v\in\Gamma(K^{-1}L^{-1})$ satisfy
$\bpartial(uv)=0$. Since $K^{-2}$ has no globally holomorphic non-zero sections, either $u=0$ or $v=0$. We will treat the case 
$u=0$: the other case follows \emph{mutatis mutandis}. In this case $g$ has a $2\times 2$ block decomposition and we deduce that
\eqref{eq:A} holds if and only if $\eta_1=0$ and
\[
\begin{pmatrix} 0& -\eta_2\\ 0&\alpha \end{pmatrix}
= -g_1^{-1}\dot{g_1}g_1^{-1}\bpartial_{V_2} g_1 + g_1^{-1}\bpartial_{V_2}\dot{g_1},
\]
where
\[
\bpartial_{V_2} = \begin{pmatrix} \bpartial & -\beta_2\\ 0& \bpartial_L+\alpha \end{pmatrix}
\quad
g_1 = \begin{pmatrix} 1 &-av\\ 0& a\end{pmatrix}.
\]
However, this is precisely the condition that the deformation of the holomorphic structure of $V_2$ on $K^{-1}\oplus L$ along the
curve $[\bpartial_L+t\alpha,-\beta_2-t\eta_2]$ is constant. Hence $(\alpha,0,\eta_2)$ is tangent to the fibre of
$\hat\caW_{c,l}\to \caW_{c,l}$ at
the point $(0,\beta_1,\beta_2)$. We conclude that $d\psi$ has trivial kernel at $(L,[\beta])$.
\end{proof}

\subsection{Superminimal maps and Hodge bundles.} \label{superminimal-subsection}
An important geometric invariant of any equivariant minimal immersion $f$ is the holomorphic quartic differential
\begin{equation}\label{eq:U_4}
U_4 = \h{\II^{2,0}}{\II^{0,2}}=Q_W(\II^{2,0},\II^{2,0}).
\end{equation}
This vanishes at points where either $\II^{2,0}$ is zero or it is $Q_W$-isotropic.
\begin{rem}
It is easy to check that points where $\II^{2,0}$ is isotropic are points where $f$ has
\emph{circular ellipse of curvature} (i.e., the image of the unit
circle in each $T_z\caD$ under the map $T_z\caD\to T_z\caD^\perp$; $X\mapsto \II(X,X)$ is a circle).
\end{rem}

By a simple adaptation of the harmonic sequence arguments used in \cite{BolW}
for minimal immersions into $S^n$, it can be shown that
an equivariant minimal immersion $f:\caD\to \RH^4$ is determined up to congruence by the induced metric $\gamma$ and the
holomorphic quartic differential $U_4$. The following definition comes from the harmonic sequence theory.
\begin{defn}
We will say $f$ is \emph{superminimal} when $U_4$ vanishes identically.
\end{defn}
Let $v_\gamma$ denote the area form for the induced metric $\gamma=f^*g$. This is $\pi_1\Sigma$-invariant
and therefore lives on $\Sigma$. We will call its integral over $\Sigma$ the area of the equivariant minimal immersion.
From the Gauss equation \eqref{eq:Gauss} we deduce that it satisfies
\begin{equation}\label{eq:area}
\int_\Sigma v_\gamma = 4\pi(g-1) -\int_\Sigma \|\II^{2,0}\|^2_\gamma v_\gamma.
\end{equation}
For superminimal immersions we can relate the last term to the Euler number of the normal bundle $\chi(T\Sigma^\perp)$.
\begin{thm}\label{thm:supermin}
Suppose $f:\caD\to\RH^4$ is equivariant superminimal. Then one of the following holds: (i) $\chi(T\Sigma^\perp)> 0$ and
\begin{equation}\label{eq:supermin1}
\kappa^\perp = \|\II^{2,0}\|^2_\gamma =-(1+\kappa_\gamma),
\end{equation}
or, (ii) $\chi(T\Sigma^\perp)< 0$ and
\begin{equation}\label{eq:supermin2}
\kappa^\perp = -\|\II^{2,0}\|^2_\gamma =1+\kappa_\gamma,
\end{equation}
or, (iii) $\chi(T\Sigma^\perp)= 0$ and $f$ is totally geodesic. In all cases the area of $f$ is
\begin{equation}\label{eq:superminarea}
\int_\Sigma v_\gamma = 2\pi\big(2(g-1) - |\chi(T\Sigma^\perp)|\big).
\end{equation}
In particular, there are superminimal immersions for every value of $\chi(T\Sigma^\perp)$ with $|\chi(T\Sigma^\perp)|<2(g-1)$,
but these are only linearly full when $\chi(T\Sigma^\perp)\neq 0$.
\end{thm}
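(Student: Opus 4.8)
The plan is to recast superminimality as a statement about the components $\theta_1,\theta_2$ of $\II^{2,0}$ and then to read \eqref{eq:supermin1}--\eqref{eq:superminarea} off the Gauss and Ricci equations together with \eqref{eq:area}. First I would expand $U_4$ using \eqref{eq:theta}: since $\II^{2,0}=\theta_1+\theta_2$ and $Q_W$ is the canonical pairing $L\times L^{-1}\to 1$, the line subbundles $L,L^{-1}$ are isotropic, so $Q_W(\theta_1,\theta_1)=Q_W(\theta_2,\theta_2)=0$ and hence $U_4=2Q_W(\theta_1,\theta_2)$, a holomorphic section of $K^4$. Because the pairing $L^{-1}_z\times L_z\to\C$ is non-degenerate at every point, $U_4$ vanishes at a point exactly when $\theta_1$ or $\theta_2$ vanishes there; and since $\theta_1,\theta_2$ are holomorphic sections of line bundles, $\theta_1\not\equiv 0$ forces $\theta_2\equiv 0$ (the zero set of $\theta_1$ being finite). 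Thus $f$ is superminimal if and only if $\theta_1\equiv 0$ or $\theta_2\equiv 0$, and the argument splits into three mutually exclusive cases: (a) $\II^{2,0}\equiv 0$; (b) $\theta_1\equiv 0$, $\II^{2,0}\not\equiv 0$; (c) $\theta_2\equiv 0$, $\II^{2,0}\not\equiv 0$.

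Next I would bring in the Ricci equation from Appendix~\ref{sec:GCR}. Written with respect to the holomorphic splitting $W\simeq L\oplus L^{-1}$ it takes the form $\kappa^\perp=\|\theta_2\|^2_\gamma-\|\theta_1\|^2_\gamma$, where the pointwise norms are those induced on $K^2L$ and $K^2L^{-1}$ by $\gamma$ and the normal metric; equivalently, in an oriented orthonormal normal frame $\theta_1\equiv 0$ is the condition $A_{\nu_2}=JA_{\nu_1}$ on the trace-free shape operators, and a one-line computation with $[A_{\nu_1},JA_{\nu_1}]$ gives $\kappa^\perp=\|\II^{2,0}\|^2_\gamma$. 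In case (b) we therefore get $\kappa^\perp=\|\II^{2,0}\|^2_\gamma$, which combined with the Gauss equation \eqref{eq:Gauss} is exactly \eqref{eq:supermin1}; since $\|\II^{2,0}\|^2_\gamma\ge 0$ is not identically zero, integrating and using \eqref{eq:degL} together with $\chi(T\Sigma^\perp)=\deg L=\tfrac1{2\pi}\int_\Sigma\kappa^\perp v_\gamma$ (Chern--Weil for the oriented normal plane bundle) gives $\chi(T\Sigma^\perp)>0$. Case (c) is the mirror image: $A_{\nu_2}=-JA_{\nu_1}$, $\kappa^\perp=-\|\II^{2,0}\|^2_\gamma$, hence \eqref{eq:supermin2} and $\chi(T\Sigma^\perp)<0$. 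In case (a), since $f$ is minimal $\II=\II^{2,0}+\overline{\II^{2,0}}$ vanishes, so $f$ is totally geodesic; then $\kappa^\perp\equiv 0$ by the Ricci equation, so $\chi(T\Sigma^\perp)=0$ --- this is alternative (iii). (One could instead invoke Proposition~\ref{prop:beta=0} and the flatness of the normal bundle of a totally geodesic $\RH^2$.)

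For the area formula I would note that in all three cases $\kappa^\perp$ has constant sign, so $\int_\Sigma\|\II^{2,0}\|^2_\gamma\,v_\gamma=\bigl|\int_\Sigma\kappa^\perp v_\gamma\bigr|=2\pi|\chi(T\Sigma^\perp)|$; substituting into \eqref{eq:area} gives \eqref{eq:superminarea}. Since $f$ is an immersion, $\gamma$ is a genuine metric and $\int_\Sigma v_\gamma>0$, which forces $|\chi(T\Sigma^\perp)|<2(g-1)$. For the final (existence) assertion I would run the construction of Lemma~\ref{lem:tr=0} and Theorem~\ref{thm:Higgs} in reverse: given $l$ with $0<l<2(g-1)$, choose $L\in\Pic_l(\Sigma_c)$, put $\beta_1=0$ (so $\theta_1=0$) and let $\beta_2$ be any non-trivial class in $\caV_{c,l}^0$, which is non-empty by Lemma~\ref{lem:maxdeg}; by Lemma~\ref{lem:stab1} this is a stable, indecomposable $SO_0(4,1)$-Higgs bundle, so Theorem~\ref{thm:Higgs} yields a linearly full equivariant minimal immersion, necessarily superminimal with $\chi(T\Sigma^\perp)=l$. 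The values $-2(g-1)<l<0$ are obtained symmetrically (take $\beta_2=0$ and $\beta_1\in\iota^*\caV_{c,-l}^0$), and $l=0$ by the totally geodesic embeddings $\RH^2\hookrightarrow\RH^4$; these last are superminimal but, being totally geodesic (equivalently decomposable, cf. Lemma~\ref{lem:stab2}), not linearly full.

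The step I expect to be the only real obstacle is pinning down the precise identity $\kappa^\perp=\|\theta_2\|^2_\gamma-\|\theta_1\|^2_\gamma$: that the coefficient is exactly $1$ and, crucially, that its sign correlates with that of $\deg L=\chi(T\Sigma^\perp)$ so that case (b) lands in alternative (i) and case (c) in alternative (ii). This requires reconciling several sign and normalisation conventions at once: those of the Gauss and Ricci equations in Appendix~\ref{sec:GCR}, the orientation conventions fixing $J$ and singling out $L$ as the $(+i)$-eigenbundle, and the normalisation of $\|\,\cdot\,\|^2_\gamma$ implicit in \eqref{eq:area}. Once these are matched the remainder is bookkeeping, and no analysis beyond the Gauss--Codazzi--Ricci system is needed.
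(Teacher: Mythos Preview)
Your proof is correct and follows essentially the same route as the paper: factor $U_4=2\theta_1\theta_2$, use the Ricci identity (the paper's Lemma~\ref{lem:supermin}, which is exactly the special case of your $\kappa^\perp=\|\theta_2\|^2_\gamma-\|\theta_1\|^2_\gamma$ that is needed) to get $\kappa^\perp=\pm\|\II^{2,0}\|^2_\gamma$, then combine with the Gauss equation and \eqref{eq:area}. The only difference is the direction of inference in the trichotomy: the paper starts from the sign of $\deg L$ and invokes the stability Lemma~\ref{lem:stab2} to decide which $\theta_j$ must vanish, whereas you case-split on which $\theta_j$ vanishes and read off the sign of $\chi(T\Sigma^\perp)$ by integrating $\kappa^\perp$ --- your order is marginally cleaner since stability is then only needed for the existence clause. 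Your concern about the sign/normalisation in the Ricci identity is handled precisely by the local computation in Lemma~\ref{lem:supermin}, so there is no genuine obstacle.
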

\begin{proof}
Since $\II^{2,0}=\theta_1+\theta_2$ and $L,L^{-1}$ are isotropic and paired by $Q_W$, we have
$U_4 = 2\theta_1\theta_2$.  So $U_4=0$ if and only if  either $\theta_1=0$ or $\theta_2=0$. Now $\theta_j=0$ if and
only if $\beta_j=0$. By the stability conditions
in Lemma \ref{lem:stab2}, if $\chi(T\Sigma^\perp)=\deg(L)> 0$ then $[\beta_2]\neq 0$  and therefore it must be
$\theta_1=0$. From Lemma
\ref{lem:supermin} in appendix \ref{sec:GCR} this implies that the normal curvature satisfies \eqref{eq:supermin1}. Similarly,
for $\deg(L)<0$ we have $\theta_2=0$, which yields \eqref{eq:supermin2}. From Theorem \ref{thm:RH4} we know that when
$l=\chi(T\Sigma^\perp)\neq 0$ we have families of linearly full immersions with either $[\beta_1]=0$ (provided $l>0$)
or $[\beta_2]=0$ (provided $l<0$).

Now if $\deg(L)=0$ then one of
\eqref{eq:supermin1} or \eqref{eq:supermin2} must still hold, but $\int_\Sigma\kappa^\perp v_\gamma=0$, hence $\kappa^\perp=0$
and therefore $\II^{2,0}=0$, i.e., $f$ is totally geodesic.
\end{proof}
By combining \eqref{eq:area} with Lemma \ref{lem:kappaperp} we obtain an area bound for every equivariant minimal
immersion, based on the connected component of the moduli space in which it lies.
\begin{cor}\label{cor:areabound}
The area of an equivariant minimal immersion $f:\caD\to\RH^4$ is bounded
above by the area of any superminimal immersion whose normal bundle has the same Euler number:
\begin{equation}\label{eq:allarea}
\int_\Sigma v_\gamma \leq 2\pi[2(g-1) - |\chi(T\Sigma^\perp)|].
\end{equation}
\end{cor}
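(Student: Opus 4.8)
The plan is to read the area identity \eqref{eq:area} as saying that maximising $\int_\Sigma v_\gamma$ is the same as minimising $\int_\Sigma\|\II^{2,0}\|^2_\gamma\,v_\gamma$, and then to bound the latter integral below by $2\pi|\chi(T\Sigma^\perp)|$. There are only two ingredients: Gauss-Bonnet for the normal bundle and the pointwise estimate of Lemma~\ref{lem:kappaperp}.

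First I would recall that, since $T\Sigma^\perp$ is an oriented rank-two Euclidean bundle with compatible connexion $\nabla^\perp$, its curvature two-form equals $\kappa^\perp v_\gamma$ and represents $2\pi$ times the Euler class, so that
\[
\int_\Sigma \kappa^\perp\, v_\gamma = 2\pi\,\chi(T\Sigma^\perp).
\]
Next I would invoke Lemma~\ref{lem:kappaperp}, whose content is the Ricci-equation bound $|\kappa^\perp|\leq \|\II^{2,0}\|^2_\gamma$ holding pointwise on $\Sigma$ (equivalently, $\kappa^\perp = \|\theta_2\|^2_\gamma - \|\theta_1\|^2_\gamma$ in the notation of \eqref{eq:theta}), with equality exactly where $U_4$ vanishes. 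Integrating and applying the triangle inequality gives
\[
\int_\Sigma \|\II^{2,0}\|^2_\gamma\, v_\gamma \;\geq\; \int_\Sigma |\kappa^\perp|\, v_\gamma \;\geq\; \Bigl|\int_\Sigma \kappa^\perp\, v_\gamma\Bigr| \;=\; 2\pi\,|\chi(T\Sigma^\perp)|.
\]
Substituting into \eqref{eq:area} then yields
\[
\int_\Sigma v_\gamma \;=\; 4\pi(g-1) - \int_\Sigma \|\II^{2,0}\|^2_\gamma\, v_\gamma \;\leq\; 2\pi\bigl[2(g-1) - |\chi(T\Sigma^\perp)|\bigr],
\]
which is precisely \eqref{eq:allarea}; comparing with \eqref{eq:superminarea} identifies the right-hand side as the area of any superminimal immersion whose normal bundle has the same Euler number, so the bound is sharp and attained exactly on the superminimal locus, where $U_4\equiv 0$.

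Since the pointwise inequality $|\kappa^\perp|\leq \|\II^{2,0}\|^2_\gamma$ is supplied by Lemma~\ref{lem:kappaperp} in the appendix, the remaining work is just an integration together with Gauss-Bonnet, and I do not anticipate any genuine obstacle. The only point requiring a little care is to check that the normalisation constants in the Euler-class computation are consistent with those in \eqref{eq:area} and \eqref{eq:superminarea}, and to note that equality throughout forces $\kappa^\perp$ to have constant sign and $U_4\equiv 0$, recovering the superminimal case as the unique extremal situation.
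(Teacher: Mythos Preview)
Your argument is correct and is exactly the one the paper has in mind: the text states the corollary follows ``by combining \eqref{eq:area} with Lemma \ref{lem:kappaperp}'', and your chain of inequalities $|\kappa^\perp|\leq \|\II^{2,0}\|^2_\gamma$, integration, and the Euler-class identity (which is \eqref{eq:chiperp} in the appendix) is precisely that combination spelled out. Your parenthetical identity $\kappa^\perp=\|\theta_2\|^2_\gamma-\|\theta_1\|^2_\gamma$ is not the literal content of Lemma~\ref{lem:kappaperp} but follows from the Ricci computation in its proof, and in any case you do not actually need it for the bound.
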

The next result allows us to relate the structure of $\caM(\Sigma,\RH^4)$ to the topology of the moduli spaces of Higgs
bundles $\caH(\Sigma_c,SO_0(4,1))$.
\begin{prop}\label{prop:Hodge}
An equivariant minimal immersion $f:\caD\to\RH^4$ is superminimal if and only if its Higgs bundle is a Hodge bundle.
\end{prop}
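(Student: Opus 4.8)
The plan is to work directly with the explicit holomorphic data, exploiting the description of $\bpartial_V$ in \eqref{eq:V4hol} together with the characterisation of Hodge bundles as fixed points of the $\C^\times$-action on the Higgs bundle moduli space. Recall that a $G$-Higgs bundle $(E,\Phi)$ is a Hodge bundle (equivalently a complex variation of Hodge structure) precisely when it is a fixed point of this action, which for an $SO_0(n,1)$-Higgs bundle means there is a holomorphic grading $V=\bigoplus_j V^j$ compatible with $Q_V$ under which $\Phi$ has weight one, i.e.\ $\phi$ maps the trivial summand into a single graded piece and $\phi^t$ shifts degree by $-1$. So the first step is to unwind what this grading condition says for the bundle built from $(L,[\beta_1],[\beta_2])$: the line $K^{-1}=\im(\phi)$ must sit in one graded piece, $K$ in another, and $W\simeq L\oplus L^{-1}$ must be graded compatibly.

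The key observation, which I would isolate as the heart of the argument, is that superminimality is exactly the condition $\theta_1\theta_2=0$, i.e.\ $[\beta_1]=0$ or $[\beta_2]=0$ (using Proposition~\ref{prop:beta=0}-style reasoning and the stability lemmas \ref{lem:stab1}, \ref{lem:stab2} to rule out both vanishing simultaneously when $f$ is linearly full). So I would argue in two directions. For the forward direction, suppose $f$ is superminimal; without loss of generality $[\beta_1]=0$, so in \eqref{eq:V4hol} the off-diagonal entries involving $\beta_1$ vanish and $L$ splits off as a holomorphic sub\emph{and} quotient orthogonal summand. Then $V$ decomposes holomorphically as $V_2\oplus L$ with $V_2$ the extension $0\to K^{-1}\to V_2\to L^{-1}\to 0$, and one reads off a grading: $V^{1}=K^{-1}$, $V^{0}=L^{-1}$, $V^{-1}=K$ pair up under $Q_V$ while $L$ sits at weight zero too (or one assigns $L$ and $L^{-1}$ symmetric weights so that $Q_W$ pairs them); the Higgs field $\phi:1\to K^{-1}=V^1$ then manifestly has weight one and $\phi^t:V\to 1$ is nonzero only on $V^{-1}=K$. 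Hence $(E,\Phi)$ is a Hodge bundle. For the converse, suppose the Higgs bundle is a Hodge bundle with grading $E=\bigoplus E^j$; since $\phi\ne0$ takes $1$ into $K^{-1}$, that summand must be a graded piece, and since $\phi^t$ is dual, $K$ is another, with $W$ split into graded pieces orthogonally. A graded piece of $W$ is an $L^{\pm1}$-eigenline; the component $\beta_j$ of $\bpartial_V$ is an off-diagonal map between $K$ (or $K^{-1}$) and an $L^{\pm1}$-summand, and holomorphicity of the grading forces $\bpartial_V$ to preserve each $E^j$, hence $\beta_j$ can be nonzero only if it is a map within a single graded piece — which a degree count shows is impossible for both $j=1,2$ at once; at least one $\beta_j$ must vanish, giving $\theta_1\theta_2=0$, i.e.\ $U_4\equiv 0$.

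The main obstacle I anticipate is bookkeeping the grading weights consistently with the orthogonal structure $Q_V$: the rank-one trivial summand of $E$ carries weight zero (it is fixed by $\Phi$ up to the shift), $K^{-1}$ and $K$ must sit at opposite weights $\pm1$ so that $Q_V$ pairs them, and the two normal directions $L,L^{-1}$ must be arranged either both at weight zero or at $\pm1$ depending on which $\beta_j$ survives — and one must check these choices are forced (not merely possible), so that the equivalence is genuinely an iff rather than a one-sided implication. Concretely the subtlety is that when, say, $[\beta_2]\ne0$ and $[\beta_1]=0$, the grading is $V^2=K^{-1}$, $V^1=L$, $V^0=L^{-1}$(?)\ldots{} — one needs to verify that $\bpartial_V$ in \eqref{eq:V4hol} is block-upper-triangular for exactly this filtration and that no finer or coarser grading also works in the non-superminimal case. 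Once the weight assignment is pinned down, the remaining verifications — holomorphicity of the grading, weight-one property of $\Phi$, compatibility with $Q_V$ — are routine matrix computations using \eqref{eq:V4hol} and \eqref{eq:Q_V}, and I would state them as such rather than expand them. I would also remark that this proposition, combined with Hausel's theorem and the earlier structure results, is what identifies the superminimal locus with the critical manifolds of the Hitchin function, setting up the Morse-theoretic description in \S\ref{sec:Morse}.
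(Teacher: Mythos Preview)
Your strategy is right, and you have correctly isolated the key fact: $U_4\equiv 0$ iff $\theta_1\theta_2=0$ iff one of $\beta_1,\beta_2$ vanishes identically. But the execution has a concrete error that you yourself flag as the ``main obstacle'' and do not resolve. When $\beta_1=0$ and $\beta_2\neq0$, inspection of \eqref{eq:V4hol} shows the holomorphic splitting of $V$ is into two \emph{rank-two} isotropic pieces, $(K^{-1}\oplus L)\oplus(L^{-1}\oplus K)$; the $-\beta_2$ entry entangles $K^{-1}$ with $L$ (and dually $L^{-1}$ with $K$), so neither $L$ nor $K^{-1}$ can be a graded summand on its own. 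Your claimed decomposition ``$V\simeq V_2\oplus L$'' has the wrong rank, and your tentative fine grading $V^2=K^{-1},\,V^1=L,\ldots$ cannot be holomorphic. Similarly, in the converse direction you assert without justification that each graded piece of $W$ must be a single line $L^{\pm1}$; this is exactly what needs to be proved, and the ``degree count'' you invoke is never made precise.

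The paper circumvents all of this by using the Aparicio--Garc\'ia-Prada characterisation directly: a Hodge bundle is given by a parallel skew $\psi\in\End(V)$ with $\psi\phi=i\phi$, so $\im\phi=K^{-1}$ lies in the $i$-eigenbundle $W_1$, which is automatically $Q_V$-isotropic. Since $\rk V=4$ and $Q_V$ pairs $W_1$ with $W_{-1}$ non-degenerately, one gets $\rk W_1\in\{1,2\}$. Rank one forces $W_1=K^{-1}$ and the Higgs bundle decomposes (the totally geodesic case); rank two forces $W_1\in\{V_1,V_2\}$, as these are the only rank-two isotropic subbundles of $V$ containing $K^{-1}$, and holomorphicity of the splitting $V=W_1\oplus W_{-1}$ then reads off from \eqref{eq:V4hol} as $\beta_1=0$ or $\beta_2=0$. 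Conversely, given (say) $\beta_1=0$, one defines $\psi$ to be $\pm i$ on the two rank-two summands and checks the conditions. This rank dichotomy on $W_1$ is the organising principle your argument is missing; once you have it there is no bookkeeping of individual line-bundle weights to do.
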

\begin{proof}
We observed above that $U_4=0$ if and only if at least one of $\beta_1,\beta_2$ is identically zero.
According to Proposition 7.5 in \cite{ApaG} the Higgs bundle $(V,Q_V,\phi)$ is a Hodge bundle when $V$ decomposes into a direct
sum $V=\oplus_r (W_r\oplus W_{-r})$ of holomorphic subbundles for which
$W_r$ is the eigenbundle (with eigenvalue $ir$, $r\in\R$) of an infinitesimal gauge transformation $\psi\in\Gamma(\End(V))$
satisfying $\psi^t=-\psi$, $\nabla\psi=0$ and
\[
\left[\begin{pmatrix} \psi &0\\ 0& 0\end{pmatrix},\begin{pmatrix} 0 & \phi\\ \phi^t&0\end{pmatrix}\right]=
i\begin{pmatrix} 0 & \phi\\ \phi^t&0\end{pmatrix}.
\]
This last condition is equivalent to $\psi\phi=i\phi$. 
In particular: (i) $\im\phi\subset W_1$, (ii) $Q_V(W_a,W_b)=0$ unless $b=-a$,
in which case $Q_V$ pairs them dually. When $V$ has rank $4$ there are only two possibilities: $W_1$ has rank either one or
two. In the former case $W_1=\im\phi =K^{-1}$ and $V$ must be decomposable, with Higgs bundle
decomposition
\[
(V'\oplus 1,\Phi')\oplus (L\oplus L^{-1},0),
\]
where $V'=K^{-1}\oplus K$ and $\Phi'$ is just $\Phi$ restricted to $V'$. In this case polystability requires $\deg(L)=0$
and the corresponding
minimal immersion is totally geodesic into a copy of $\RH^2$. When $W_1$ has rank two it is either $V_1$ or
$V_2$, since it is $Q_V$-isotropic and contains $\im\phi$. The holomorphic splitting $V=W_1\oplus W_{-1}$
then implies that either $\beta_1=0$ or $\beta_2=0$.

Conversely, suppose $\beta_1=0$, then we have a holomorphic splitting $V=V_2\oplus V_{-2}$ where
$V_{-2}$ is the subbundle $L^{-1}\oplus K$ with the holomorphic structure induced from $\bar\partial_V$.
Define $\psi$ to have
$i$-eigenspace $V_2$ and $-i$-eigenspace $V_{-2}$. This ensures that $\psi$ is skew-symmetric for $Q_V$ and that
$\psi\phi=i\phi$. It is
also $\nabla$-parallel since it acts as the complex structure on $K^{-1}$ (which is K\"{a}hler)) and as $J$ on $L$
(which is parallel). Hence $(V,Q_V,\phi)$ is a Hodge bundle. The case of $\beta_2=0$ is argued similarly.
\end{proof}

\subsection{Some remarks on the structure of $\caM(\Sigma,\RH^4)$.}\label{sec:Morse}
Now that we have identified the Hodge bundles we can gain more insight into the structure of $\caM(\Sigma,\RH^4)$
and, in particular, how its
topology is related to that of each Higgs bundle moduli space $\caH(\Sigma_c,SO_0(4,1))$.  This is very similar to the structure
observed for $N=\CH^2$ in \cite[\S 6.3]{LofM15}. First let us note that for $G=SO_0(4,1))$ the topology of $\caH(\Sigma_c,G)$ itself
is nowhere near as well understood as the case $G=PU(2,1)$. It is not even clear how many connected components it has (see
\cite{ApaG}), although one does know that it is disconnected by the invariant $w_2(\rho)\in\Z_2$, which for us
equals $\chi(T\Sigma^\perp)\bmod 2$.

Consider $\caM(\Sigma,\RH^4)$ as a family over $\caT_g$ with fibres $\caM(\Sigma_c,\RH^4)$.
By Theorem \ref{thm:RH4}
\[
\caM(\Sigma_c,\RH^4)\simeq \bigcup_{|l|<2(g-1)} \caW_{c,l}.
\]
Let $\caS_{c,l}$ be the locus of superminimal immersions for conformal class $[c]$ and whose normal bundle has Euler
number $l$. Let $(L,[\xi_1,\xi_2])$ denote the $\Ct$-orbit of $(L,\xi_1,\xi_2)$ described in
\eqref{eq:Ct}). 
From the proof of Theorem \ref{thm:supermin} we see that
\[
\caS_{c,l} = \begin{cases} \{(L,[0,\xi_2])\in\caW_{c,l}\}\text{ for $l>1$},\\
\{(L,[\xi_1,0])\in\caW_{c,l}\}\text{ for $l<1$},\\
\{(L,[0,0])\}\text{ for $l=0$}.\\
\end{cases}
\]
When $l\neq 0$ we have $\caS_{c,l}\subset\caW_{c,l}$, but $S_{c,0}$ lies on the boundary of $\caW_{c,0}$.
Notice that when $l>1$ $\caS_{c,l}$ is isomorphic to the bundle $\P\caV_{c,l}$, when $l<1$ it
is isomorphic to $\P\iota^*\caV_{c,-l}$, and $S_{c,0}\simeq\Pic_0(\Sigma_c)$. 
For $l\neq 0$ we can view $\caW_{c,l}$ as a vector bundle over $\caS_{c,l}$. 
When $l>1$ there is a natural projection 
\[
\caW_{c,l}\to \caS_{c,l};\quad (L,[\xi_1,\xi_2])\mapsto (L,[0,\xi_2])
\]
whose fibre at $L$ is identifiable with $H^1(\Sigma_c,K^{-1}L)$, and a similar observation holds for $l<0$. We will now show 
how this structure is related to the
\emph{Hitchin function} $\fE(E,\Phi)=\tfrac12\|\Phi\|_{L^2}^2$ on $\caH(\Sigma_c,G)$. Note that with this normalisation $\fE$ 
gives the harmonic map energy of the harmonic map corresponding to $(E,\Phi)$, and therefore it is the area, in the sense
defined earlier, when this harmonic map is a minimal immersion.

By Lemma \ref{lem:tr=0} the image of $\psi:\caW_{c,l}\to \caH(\Sigma_c,G)$ in \eqref{eq:psi}
lies in the \emph{nilpotent cone}, the locus where $\tr\Phi^2=0$. Now we recall Hausel's theorem \cite{Hau}, which asserts that the 
nilpotent cone agrees with the downwards gradient flow of $\fE$.
Recall that $\fE$ is viewed as the moment map for the Hamiltonian action  of $S^1$ on $\caH(\Sigma_c,G)$ given by
$(E,\Phi)\mapsto (E,e^{i\theta}\cdot\Phi)$. The fixed points of this action (and hence the critical points of $\fE$)
are precisely the Hodge bundles. Moreover, by a theorem of Kirwan \cite[Thm 6.16]{Kir} the unstable manifold of the 
downward gradient flow from a critical manifold $C$ of $\fE$ agrees with 
\[
\{(E,\Phi): \lim_{\lambda\to\infty}(E,\lambda\Phi) \in C\}.
\]
Given these facts, we can prove the following.
\begin{prop}
For $l\neq 0$, $\psi(\caW_{c,l})$ lies in the unstable manifold of $\psi(\caS_{c,l})$ for the downward gradient flow of
$\fE$.
\end{prop}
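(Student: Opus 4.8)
The proof rests on two facts quoted above: Hausel's theorem, which (via Lemma~\ref{lem:tr=0}) tells us that $\psi(\caW_{c,l})$ lies in the nilpotent cone $=$ downward gradient flow of $\fE$; and the consequence of Kirwan's theorem that the unstable manifold of a critical manifold $C$ of $\fE$ is $\{(E,\Phi):\lim_{\lambda\to\infty}(E,\lambda\Phi)\in C\}$. Since $\psi(\caS_{c,l})$ consists of Hodge bundles by Proposition~\ref{prop:Hodge}, hence of critical points of $\fE$, it therefore suffices to fix an arbitrary $(L,[\beta])\in\caW_{c,l}$ with Higgs bundle $(E,\Phi)=\psi(L,[\beta])$ and show that the limit $\lim_{\lambda\to\infty}(E,\lambda\Phi)$, computed in $\caH(\Sigma_c,G)$, equals the Hodge bundle obtained by applying to $(L,[\beta])$ the bundle projection $\caW_{c,l}\to\caS_{c,l}$ described just before this proposition. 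The plan is to carry this out for $l>0$, where that projection sends $(L,[\xi_1,\xi_2])\mapsto(L,[0,\xi_2])$; the case $l<0$ is identical after interchanging $L\leftrightarrow L^{-1}$ and $\beta_1\leftrightarrow\beta_2$, the limit then being $\psi(L,[\xi_1,0])\in\psi(\caS_{c,l})$.

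The only real idea is the choice of complex gauge transformation used to evaluate the limit. Working on the fixed smooth bundle $E=K^{-1}\oplus L\oplus L^{-1}\oplus K\oplus 1$, with $\bpartial_V$ of the form~\eqref{eq:V4hol}, I would take $g_\lambda=\diag(\lambda^{-1},\lambda^{-1},\lambda,\lambda,1)$. A direct check shows $g_\lambda\in SO(Q_E,\C)$, that $\det g_\lambda=1$, and that $g_\lambda(\lambda\Phi)g_\lambda^{-1}=\Phi$; hence for every $\lambda$ the Higgs bundle $(E,\lambda\Phi)$ is isomorphic to $(g_\lambda\bpartial_E g_\lambda^{-1},\Phi)$, and the two represent the same point of $\caH(\Sigma_c,G)$. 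Conjugating $\bpartial_V$ by $g_\lambda$ leaves the diagonal blocks and $Q_V$ unchanged, multiplies the two entries of~\eqref{eq:V4hol} containing $\beta_2$ by $\lambda^{0}$, and multiplies the two entries containing $\beta_1$ by $\lambda^{-2}$. Thus $g_\lambda\bpartial_E g_\lambda^{-1}$ converges in $C^\infty$, as $\lambda\to\infty$, to the operator obtained from~\eqref{eq:V4hol} by setting $\beta_1=0$, and together with the unchanged $\Phi$ and $Q_V$ this limiting data is precisely the $SO_0(4,1)$-Higgs bundle attached to $(L,[0,\xi_2])$.

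Finally I would check that this limiting data is polystable, so that convergence of the representatives $(g_\lambda\bpartial_E g_\lambda^{-1},\Phi)$ forces convergence of $(E,\lambda\Phi)$ in $\caH(\Sigma_c,G)$ to the corresponding point. Since $\xi_2\in\caV_{c,l}^0$ and $0<l<2(g-1)$, every line subbundle of $V_2$ has negative degree; and the now-split extension $V_1\simeq K^{-1}\oplus L^{-1}$ has only negative-degree line subbundles because $\deg K^{-1}=2-2g<0$ and $\deg L^{-1}=-l<0$. Hence Lemma~\ref{lem:stab1} applies and the limit is stable. This gives $\lim_{\lambda\to\infty}(E,\lambda\Phi)=\psi(L,[0,\xi_2])\in\psi(\caS_{c,l})$, and by Kirwan's characterisation $\psi(L,[\beta])$ lies in the unstable manifold of $\psi(\caS_{c,l})$; since $(L,[\beta])$ was arbitrary, the same holds for all of $\psi(\caW_{c,l})$. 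The point requiring care — and the reason the naive gauge transformation that simply rescales $\Phi$ by $\lambda$ fails — is that an ill-chosen weighting drives $(E,\lambda\Phi)$ toward the decomposable bundle $(V'\oplus 1,\Phi')\oplus(L\oplus L^{-1},0)$ with $\deg L\neq 0$, which is \emph{not} polystable and so is not a point of the moduli space; one must put the extra $\lambda$-weight onto the $L\oplus L^{-1}$ summands exactly so that $\beta_1$ dies while $\beta_2$, and with it stability, survives in the limit.
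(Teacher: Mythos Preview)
Your argument is correct and follows essentially the same route as the paper: both prove the key identity by exhibiting a diagonal complex gauge transformation in $SO(Q_E,\C)$ that rescales $\Phi$ by $\lambda$ while sending the extension data to $[\lambda^{-2}\xi_1,\xi_2]$, so that the $\lambda\to\infty$ limit lands in $\psi(\caS_{c,l})$. The only cosmetic difference is that the paper uses $g_\lambda=\diag(\lambda,I_W,\lambda^{-1},1)$ and then invokes the $SO(Q_W)\simeq\Ct$ orbit equivalence $[\lambda^{-1}\xi_1,\lambda^{-1}\xi_2]=[\lambda^{-2}\xi_1,\xi_2]$ as a separate step, whereas your choice $g_\lambda=\diag(\lambda^{-1},\lambda^{-1},\lambda,\lambda,1)$ folds that $SO(Q_W)$ factor into the gauge transformation and kills $\beta_1$ directly; your added check that the limiting bundle is stable (via Lemma~\ref{lem:stab1}) is a careful point the paper leaves implicit.
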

Note that this result is reflected in the bound on area in Corollary \ref{cor:areabound}.
\begin{proof}
By the remarks above it suffices to show that if $(E,\Phi)$ is the image under $\psi$ of $(W,Q_W,[\xi])\in\caW_{c,l}$ then
$\lim_{\lambda\to\infty}(E,\lambda\Phi)$ lies in $\psi(\caS_{c,l})$. In fact we will show that
\begin{equation}\label{eq:lambda}
\psi(W,Q_W,[\lambda^{-1}\xi]) = (E,\lambda\Phi).
\end{equation}
To see this, let represent the holomorphic structure $\bar\partial_E$ on $E$ and the Higgs field $\Phi$, with respect to the smooth
isomorphism $E\simeq K^{-1}\oplus W\oplus K\oplus 1$, by
\[
\bar\partial_E = \begin{pmatrix} \bar\partial &-\beta^t & 0 & 0\\
0&\bar\partial_W&\beta&0\\
0&0&\bar\partial&0\\
0&0&0&\bar\partial
\end{pmatrix},
\qquad
\Phi=\begin{pmatrix} 0&0&0&1\\ 0&0&0&0\\ 0&0&0&0\\ 0&0&1&0 \end{pmatrix},
\]
where $\beta\in \caE^{0,1}(\Hom(K,W))$ is such that $-\beta^t$ represents $[\xi]$.
Now a simple calculation shows that for any $\lambda\in\Ct$ the constant gauge transformation
\[
g_{\lambda} = \begin{pmatrix} \lambda &0&0&0\\ 0&I_W&0&0\\ 0&0&\lambda^{-1}\\ 0&0&0&1\end{pmatrix}
\]
has the property that $g_\lambda\Phi g_\lambda^{-1} = \lambda\Phi$, while $g_\lambda^{-1}\bar\partial_E g_\lambda$ is obtained by
replacing $\beta$ with $\lambda^{-1}\beta$ above. In other words the Higgs bundle $\psi(W,Q_W,[\lambda^{-1}\xi])$ is gauge equivalent
to $(E,\lambda\Phi)$. Since $\psi$ is continuous,
\[
\lim_{\lambda\to\infty} (E,\lambda\Phi) = \psi(W,Q_W,\lim_{\lambda\to\infty}[\lambda^{-1}\xi]).
\]
Since 
\[
[\lambda^{-1}\xi_1,\lambda^{-1}\xi_2]=[\lambda^{-2}\xi_1,\xi_2] = [\xi_1,\lambda^{-2}\xi_2],
\]
we deduce that, for $l\neq 0$, $\psi(\caW_{c,l})$ lies in the unstable manifold the downward gradient flow of of $\psi(\caS_{c,l})$.
\end{proof}
The identity \eqref{eq:lambda} also helps us understand the boundary of each component $\caM_l(\Sigma,\RH^4)$ for $l\neq 0$.
Aparicio \& Garc\'ia-Prada have shown in \cite[Thm 8.4]{ApaG} that the smooth minima of the Hitchin functional are absolute
minima, i.e., their Higgs fields are zero. Polystability obliges the corresponding representations take values in a maximal
compact subgroup of $G$. These do not correspond to minimal immersions, but rather to constant (harmonic) maps of $\caD$
into $\RH^4$. By \eqref{eq:lambda} these lie on the ``boundary at infinity'' of $\caW_{c,l}$ (i.e., as $\lambda\to 0$), 
as the limit
of downward gradient flow. In particular, each component $\caM_l(\Sigma,\RH^4)$ has this common boundary, but to pass through
this boundary requires collapsing the immersion down to a constant map.

The structure of $\caW_{c,0}$ is a little different. By \eqref{eq:lambda} this is the unstable manifold of the downward gradient 
flow from $\caS_{c,0}$, which lies on the boundary of $\caM(\Sigma,\RH^4)$. By Theorem \ref{thm:supermin} $\cup_c
\caS_{c,0}$ contains all the totally geodesic immersions into a copy of $\RH^2$. For each of these the representation
$\rho$ is reducible: one factor provides a representation into $SO_0(2,1)$ in the conjugacy class $[c]$, and
the other factor is a representation into $SO(2)$ acting on the flat normal bundle of the copy of $\RH^2$ (this is the data carried
by $L\in\Pic_0(\Sigma_c)$, since $\Pic_0(\Sigma_c)$ is isomorphic to the moduli space of flat $S^1$-bundles over
$\Sigma$). But by Lemma \ref{lem:stab2} $\caW_{c,0}$ also has on its boundary the set
\[
\{(1,\xi_1,\xi_2):\C.\xi_1=\C.\xi_2,\ \xi_j\neq 0\}/\Ct\simeq (H^1(\Sigma_c,K^{-1})\setminus\{0\})/\Z_2.
\]
This part of the boundary adjoins $S_{c,0}$ at one point, the limit as $\xi_1,\xi_2\to 0$. 
The isomorphism arises from the fact that in each orbit there is, up to sign, a unique $a\in\Ct$ such that
$a\xi_1=a^{-1}\xi_2$: the orbit is mapped to $\pm a\xi_1$.
From the proof of Lemma \ref{lem:stab2} this part of the boundary corresponds to a single copy of $\caM(\Sigma_c,\RH^3)/\Z_2$. 
In terms of the minimal
surface geometry, the quotient by $\Z_2$ appears because if $f:\caD\to\RH^3\subset\RH^4$ then its 
orientation in $\RH^3$ can be reversed by an orientation preserving isometry of $\RH^4$ (rotation through $\pi$ in the normal
bundle of $f$ in $\RH^4$).  
Finally, $\caW_{c,0}$ has the same ``boundary at infinity'' as $\caW_{c,l}$.
In summary, the boundary of the connected component $\caM_0(\Sigma,\RH^4)$ (i.e., the minimal immersions with flat
normal bundle) in $\caT_g\times \caR(\pi_1\Sigma,G)$ contains multiple copies of the moduli
space $\caM(\Sigma,\RH^2)$, one for each pair $(c,L)$ of marked conformal structure $c$ and degree zero holomorphic line bundle $L$
over $\Sigma_c$, and it contains one copy of the moduli $\caM(\Sigma,\RH^3)/\Z_2$ of linearly full unoriented minimal immersions 
into $\RH^3$.
\begin{rem}
This common ``boundary at infinity'' of Higgs bundles with zero Higgs field is, of course, identifiable with the moduli
space of flat $SO(4)$-bundles over $\Sigma$. As Aparicio and Garc\'ia-Prada note, it is this subspace which carries all
the information about the connected components of $\caH(\Sigma_c,G)$. However, when it comes to understanding
$\caM(\Sigma,\RH^4)$ the topology of the space of absolute minima plays no role, since as a limit of minimal immersions all 
limit points are the same, viz, constant maps. 
\end{rem}

\appendix

\section{The Gauss-Codazzi-Ricci equations.}\label{sec:GCR}
The zero curvature equations for the connexion $\nabla^E$ on $E$ yield the Gauss-Codazzi-Ricci equations for the minimal
immersion $f:\caD\to\RH^n$.  It is convenient to calculate these in a local orthonormal frame, adapted to the splitting
\eqref{eq:Vdecomp}. In a conformal
coordinate chart $(U,z)$ on $(\Sigma,\gamma)$, let $Z=\partial/\partial z$ and for any smooth section $\sigma$ of $E$
write $Z\sigma$ to
mean $\nabla^E_Z\sigma$ and so forth. Let $f_0$ denote the length $-1$ section of $E$ which corresponds to the map
$f$, so that $f_0$ generates the trivial line bundle in the summand $E=V\oplus 1$. Let $s=\|Z\|_\gamma$ in the induced metric
$\gamma$, so that locally $\gamma = 2s^2|dz|^2$. Then $\|\bar Z\|_\gamma=s$ so that
\[
f_1 = s^{-1} Zf_0,\quad f_{-1} = s^{-1}\bar Zf_0,
\]
locally smoothly frame $K^{-1}$ and $K$ respectively. Now choose an oriented orthonormal frame $\nu_1,\ldots,\nu_{n-2}$ for
$T\Sigma^\perp$: this provides a complex frame for $W$. Finally, let
$\eta_{jk}$ be the connexion $1$-forms for the connexion in the normal bundle $W$, i.e.,
\[
\eta_{jk}(\bar Z)=\h{\bar Z \nu_j}{\nu_k}
\]
Altogether $f_1,\nu_1,\ldots,\nu_{n-2},f_{-1},f_0$ provides a $U(n,1)$-frame for $E=V\oplus 1$.
In this local frame the holomorphic structure on $E$ is determined by the equations
\begin{eqnarray}
\bar Z f_1 & = & -(\bar Z\log s) f_1 + sf_0,\\
\bar Z \nu_j &=& \sum_k\eta_{jk}(\bar Z)\nu_k -s^{-1}\h{\II(\bar Z,\bar Z)}{\nu_j}f_1\\
\bar Zf_{-1} &=& (\bar Z\log s) f_{-1} + s^{-1} \II(\bar Z,\bar Z)\\
\bar Z f_0 &=& sf_{-1}.
\end{eqnarray}
Note that if we consider $dz$ as a local section of $K$ then $dz = s^{-1}f_{-1}$ and therefore by comparison with
\eqref{eq:beta}
\[
\beta(\bar Z):f_{-1}\mapsto s^{-1}\II(\bar Z,\bar Z).
\]
In this local frame the zero curvature equations for $\nabla$ are
\begin{eqnarray}\label{eq:GCR}
-s^{-2}Z\bar Z\log s^2  +s^{-4}\|\II(Z,Z)\|^2 +1& = &0,\\
\bar Z\h{\II(Z,Z)}{\nu_k}+\sum_j\eta_{jk}(\bar Z)\h{\II(Z,Z)}{\nu_j}&=&0,\\
R^\perp(Z,\bar Z) + s^{-2}[\II(Z,Z)\otimes\II(Z,Z)^* - \II(\bar Z,\bar Z)\otimes\II(\bar Z,\bar Z)^*] &=&0,
\end{eqnarray}
where $R^\perp(X,Y) = [\nabla^\perp_X,\nabla^\perp_Y]-\nabla^\perp_{[X,Y]}$ is the curvature in the normal bundle,
$\eta$ is the $\End(W)$ valued connexion $1$-form for the normal bundle connexion,
\[
\eta:\nu_j\mapsto \h{d\nu_j}{\nu_k}\nu_k,
\]
and
\[
\II(Z,Z)\otimes\II(Z,Z)^*:W\to W;\quad  \sigma\mapsto \II(Z,Z)\h{\sigma}{\II(Z,Z)}.
\]
Note that $R^\perp = d\eta+\eta\wedge \eta$.
The local expression for the Gaussian curvature for the induced metric $\gamma=2s^2|dz|^2$ is
\[
\kappa_\gamma = -s^{-2}Z\bar Z\log s^2,
\]
so that the Gauss equation in global form is
\begin{equation}\label{eq:Gauss}
\kappa_\gamma = -1-\|\II^{2,0}\|_\gamma^2.
\end{equation}
The Codazzi equation simply says $\nabla^\perp_{\bar
Z}\II(Z,Z)=0$, i.e., $\II^{2,0}$ is a holomorphic quadratic differential with values in the complexified
normal bundle $W$.
It follows that $\kappa_\gamma=-1$ either everywhere (for totally geodesic embeddings) or only at isolated points.

In general the Ricci equation does not simplify further. But for maps into $\RH^4$ the normal bundle curvature can be
represented by a scalar $\kappa^\perp$. This is defined by
\begin{equation}\label{eq:kappaperp}
\h{R^\perp\nu_1}{\nu_2} = \kappa^\perp v_\gamma,
\end{equation}
where $v_\gamma$ is the area form with respect to the induced metric $\gamma$.
The left hand is side is well-defined globally because the normal bundle connexion is $SO(2)$-invariant.
This definition ensures that the normal bundle curvature is related to the Euler number of the normal bundle by
\begin{equation}\label{eq:chiperp}
\chi(T\Sigma^\perp) = \frac{1}{2\pi} \int_\Sigma \kappa^\perp v_\gamma.
\end{equation}
\begin{lem}\label{lem:kappaperp}
For a minimal immersion $f$ into $\RH^4$
\begin{equation}\label{eq:globkappaperp}
(\kappa^\perp)^2 = \|\II^{2,0}\|^4_\gamma-\|U_4\|^2_\gamma = (1+\kappa_\gamma)^2 -\|U_4\|^2_\gamma,
\end{equation}
where $U_4=\h{\II^{2,0}}{\II^{0,2}}$.
\end{lem}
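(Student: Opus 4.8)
The second equality is immediate from the Gauss equation \eqref{eq:Gauss}: since $\kappa_\gamma=-1-\|\II^{2,0}\|^2_\gamma$, we have $(1+\kappa_\gamma)^2=\|\II^{2,0}\|^4_\gamma$. So the substance of the lemma is the first equality, and the plan is to extract it from the Ricci equation, the last of \eqref{eq:GCR}, by passing to the orthonormal normal frame $\nu_1,\nu_2$ introduced above. Fixing a conformal coordinate $z$ with $s=\|Z\|_\gamma$, I would write
\[
\II(Z,Z)=A_1\nu_1+A_2\nu_2,\qquad A_j=\h{\II(Z,Z)}{\nu_j}\in\C,
\]
so that $\II(\bar Z,\bar Z)=\bar A_1\nu_1+\bar A_2\nu_2$. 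With the normalisation $\gamma=2s^2|dz|^2$ used above one has $\|dz\|_\gamma^2=s^{-2}$ and $v_\gamma(Z,\bar Z)=is^2$, and since $Q_W(\nu_i,\nu_j)=\delta_{ij}$ one reads off
\[
\|\II^{2,0}\|^2_\gamma=s^{-4}\bigl(|A_1|^2+|A_2|^2\bigr),\qquad U_4=(A_1^2+A_2^2)\,dz^4,\qquad \|U_4\|^2_\gamma=s^{-8}\bigl|A_1^2+A_2^2\bigr|^2.
\]
The first of these is just the local form of the Gauss equation appearing in \eqref{eq:GCR}, which also pins down the above normalisation of $\|dz\|_\gamma$.

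Next I would feed the endomorphism $R^\perp(Z,\bar Z)$ supplied by the Ricci equation into $\nu_1$ and pair the result with $\nu_2$. Because $\bigl(\II(Z,Z)\otimes\II(Z,Z)^*\bigr)\nu_1=\bar A_1\,\II(Z,Z)$, direct substitution gives
\[
\h{R^\perp(Z,\bar Z)\nu_1}{\nu_2}=-s^{-2}\bigl(\bar A_1A_2-A_1\bar A_2\bigr)=2is^{-2}\operatorname{Im}(A_1\bar A_2).
\]
Comparing with the definition \eqref{eq:kappaperp} of $\kappa^\perp$ and dividing by $v_\gamma(Z,\bar Z)=is^2$ then yields $\kappa^\perp=2s^{-4}\operatorname{Im}(A_1\bar A_2)$, hence $(\kappa^\perp)^2=4s^{-8}\bigl(\operatorname{Im}(A_1\bar A_2)\bigr)^2$.

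It would then remain to verify the pointwise algebraic identity
\[
\bigl(|A_1|^2+|A_2|^2\bigr)^2-\bigl|A_1^2+A_2^2\bigr|^2=4\bigl(\operatorname{Im}(A_1\bar A_2)\bigr)^2,
\]
which is elementary: writing $A_j=a_j+ib_j$ it reduces to the Lagrange-type identity $(a_1^2+a_2^2)(b_1^2+b_2^2)-(a_1b_1+a_2b_2)^2=(a_1b_2-a_2b_1)^2$. Multiplying through by $s^{-8}$ identifies its left-hand side with $\|\II^{2,0}\|^4_\gamma-\|U_4\|^2_\gamma$ and its right-hand side with $(\kappa^\perp)^2$, which together with the Gauss equation gives \eqref{eq:globkappaperp}. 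I do not expect a genuine obstacle here; the only point needing care is matching the numerical constant in the formula for $\kappa^\perp$ against the chosen normalisations of the area form and of $\|dz\|_\gamma$, since it is exactly the factor $4$ that makes the Ricci side agree with $\|\II^{2,0}\|^4_\gamma-\|U_4\|^2_\gamma$.
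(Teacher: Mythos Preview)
Your proposal is correct and follows essentially the same route as the paper: write $\II(Z,Z)=A_1\nu_1+A_2\nu_2$, extract the scalar Ricci equation $\kappa^\perp = 2s^{-4}\operatorname{Im}(A_1\bar A_2)$ from \eqref{eq:GCR} and \eqref{eq:kappaperp}, and then verify the quadratic identity $(|A_1|^2+|A_2|^2)^2-|A_1^2+A_2^2|^2=4(\operatorname{Im}(A_1\bar A_2))^2$. The paper states the scalar Ricci equation \eqref{eq:ricci} directly and checks the identity by expanding both sides, while you derive the former from the endomorphism form of the Ricci equation and reduce the latter to the Lagrange identity, but these are cosmetic differences only.
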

\begin{proof}
Set $A_j = \h{\II(Z,Z)}{\nu_j}$, and write $U_4=u_4dz^4$, so that
\[
\|\II(Z,Z)\|^2 = |A_1|^2+|A_2|^2, \quad u_4 = A_1^2+A_2^2,
\]
Then the local form of the Ricci equation is
\begin{eqnarray}
%-s^{-2}Z\bar Z\log s^2 +1+ s^{-4}(|A_1|^2+|A_2|^2)&=&0,\label{eq:gauss}\\
i\kappa^\perp s^2 -s^{-2}(A_1\bar A_2 -\bar A_1 A_2) &= &0.\label{eq:ricci}
\end{eqnarray}
Define $s_2 = \sqrt{|A_1|^2+|A_2|^2}$ so that $s^{-2}s_2$ is the local expression for $\|\II^{2,0}\|_\gamma$. Then
\begin{eqnarray*}
s_2^4& = &  |A_1|^4 + 2|A_1|^2|A_2|^2 + |A_2|^4,\\
|u_4|^2 & = & |A_1|^4 + A_1^2 \bar A_2^2 + \bar A_1^2 A_2^2 + |A_2|^4.
\end{eqnarray*}
Therefore, by \eqref{eq:ricci},
\begin{equation}\label{eq:kappaperp2}
(\kappa^\perp)^2= s^{-8}(s_2^4-|u_4|^2).
\end{equation}
This gives the global equation \eqref{eq:globkappaperp}.
\end{proof}
\begin{lem}\label{lem:supermin}
Suppose $f$ is superminimal, i.e., $U_4=0$. Then either $\kappa^\perp = \|\II^{2,0}\|^2_\gamma$ or
$\kappa^\perp = -\|\II^{2,0}\|^2_\gamma$.
\end{lem}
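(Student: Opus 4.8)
The plan is to read off the conclusion directly from the identity \eqref{eq:kappaperp2} established in the proof of Lemma \ref{lem:kappaperp}. Recall that in a local conformal chart we have set $A_j=\h{\II(Z,Z)}{\nu_j}$, $s_2=\sqrt{|A_1|^2+|A_2|^2}$ (so that $s^{-2}s_2$ represents $\|\II^{2,0}\|_\gamma$), and $u_4=A_1^2+A_2^2$ (so that $s^{-4}u_4$ represents $U_4$ in the frame). The Ricci equation \eqref{eq:ricci} gave $(\kappa^\perp)^2 = s^{-8}(s_2^4-|u_4|^2)$.

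First I would observe that the hypothesis $U_4\equiv 0$ means $u_4=A_1^2+A_2^2=0$ pointwise, hence $|u_4|^2=0$ and \eqref{eq:kappaperp2} collapses to $(\kappa^\perp)^2 = s^{-8}s_2^4 = (s^{-4}s_2^2)^2$, i.e.\ $(\kappa^\perp)^2=\|\II^{2,0}\|^4_\gamma$ globally. Taking square roots gives $\kappa^\perp = \pm\|\II^{2,0}\|^2_\gamma$ at each point.

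The only genuine point to address is why the sign cannot switch from one point to another, i.e.\ why we get one of the two global alternatives rather than a mixture. The cleanest way is to note that $\kappa^\perp - \|\II^{2,0}\|^2_\gamma$ and $\kappa^\perp + \|\II^{2,0}\|^2_\gamma$ are both continuous functions on $\Sigma$ whose product is identically zero; away from the (isolated, by the Codazzi equation and holomorphicity of $\II^{2,0}$) zeros of $\|\II^{2,0}\|_\gamma$ the two factors are never simultaneously zero, so each of the two open sets $\{\kappa^\perp=\|\II^{2,0}\|^2_\gamma\}$ and $\{\kappa^\perp=-\|\II^{2,0}\|^2_\gamma\}$ is closed in the complement of those isolated zeros, and connectedness of $\Sigma$ forces one of them to be everything. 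Alternatively, and perhaps more transparently, one can argue directly in the frame: $U_4=0$ forces $(A_1-iA_2)(A_1+iA_2)=0$, so one of $\theta_1=\tfrac12(A_1-iA_2)(\nu_1+i\nu_2)dz^2$ or $\theta_2=\tfrac12(A_1+iA_2)(\nu_1-i\nu_2)dz^2$ from \eqref{eq:theta} vanishes on an open set, and since each $\theta_j$ is a holomorphic section of a line bundle it then vanishes identically; in the first case \eqref{eq:ricci} gives $i\kappa^\perp s^2 = s^{-2}(A_1\bar A_2 - \bar A_1 A_2)$ with $A_1 = iA_2$, which simplifies to $\kappa^\perp = -\|\II^{2,0}\|^2_\gamma$, and in the second case to $\kappa^\perp = +\|\II^{2,0}\|^2_\gamma$. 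This second route has the advantage of pinning down which sign corresponds to which vanishing, which is exactly what is needed in the proof of Theorem \ref{thm:supermin}. I expect no real obstacle here; the only care required is the connectedness/holomorphicity argument ruling out a pointwise-varying sign, and that is routine.
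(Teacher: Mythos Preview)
Your second route is exactly the paper's argument: factor $u_4=(A_1+iA_2)(A_1-iA_2)$, observe that one of the holomorphic sections $\theta_1,\theta_2$ must vanish identically, and substitute into \eqref{eq:ricci}. However, you have the signs swapped. When $\theta_1=0$, i.e.\ $A_1=iA_2$, one has $\bar A_1=-i\bar A_2$ and hence
\[
A_1\bar A_2-\bar A_1 A_2 = i|A_2|^2+i|A_2|^2 = i(|A_1|^2+|A_2|^2),
\]
so \eqref{eq:ricci} gives $\kappa^\perp = +\|\II^{2,0}\|^2_\gamma$, not the minus sign; similarly $\theta_2=0$ yields $\kappa^\perp = -\|\II^{2,0}\|^2_\gamma$. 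This matters precisely for the application you highlight: in Theorem~\ref{thm:supermin} the case $\chi(T\Sigma^\perp)>0$ forces $\theta_1=0$ and is matched with \eqref{eq:supermin1}, which has the plus sign.

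Your first route, via $(\kappa^\perp)^2=\|\II^{2,0}\|^4_\gamma$ from \eqref{eq:kappaperp2} together with a connectedness argument, is a correct alternative but slightly less direct: it proves the dichotomy without identifying which alternative corresponds to which $\theta_j$, so one still needs the frame computation for Theorem~\ref{thm:supermin}. The paper bypasses your connectedness step because it passes immediately to the global vanishing of one $\theta_j$ (using $U_4=2\theta_1\theta_2$ with both factors holomorphic), after which the sign of $\kappa^\perp$ is determined everywhere by a single substitution.
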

\begin{proof}
Since $u_4=A_1^2+A_2^2 = (A_1+iA_2)(A_1-iA_2)$ this vanishes if and only if either $A_1=iA_2$ or $A_1=-iA_2$.
By \eqref{eq:thetaj}, if $\theta_1=0$ then $A_1=iA_2$ and therefore \eqref{eq:ricci} becomes
\[
i\kappa^\perp = is^{-4}(|A_1|^2+|A_2|^2),
\]
and therefore $\kappa^\perp = \|\II^{2,0}\|^2_\gamma$. Similarly, when $\theta_2=0$ the opposite equality is obtained.
\end{proof}

\section{Proof of Lemma \ref{lem:maxdeg}.} \label{appendix-b-section}

We follow the terminology of \cite{LanN}. For any holomorphic vector bundle $\xi$ of rank two over $\Sigma_c$ define
\begin{eqnarray*}
s(\xi) & =  &c_1(\xi) - 2\max\{\deg(\lambda):\lambda\subset \xi\text{ a holomorphic line subbundle}\}\\
&=& c_1(\xi) - 2\mu(\xi).
\end{eqnarray*}
It is known that $s$ is lower semi-continuous on algebraic families of vector bundles. We are interested in non-trivial
extension bundles of the form $0\to K^{-1}\to\xi\to L\to 0$ where $1\leq l<2(g-1)$ for $l=\deg(L)$. We want to show that
the set $\caV_l^0$ of all such extensions with $\mu(\xi)<0$, equally $s(\xi)>l-2(g-1)$, is non-empty and open.
By lower semi-continuity, it suffices to show that this is non-empty.

Since $s(K\otimes\xi) = s(\xi)$ it is equivalent to consider extension bundles of the form
\[
0\to 1\to \xi\to \lambda\to 0,
\]
for which  $d=\deg(\lambda)= l+2(g-1)$ and $s(\xi)>l-2(g-1)$. By Serre duality $H^1(\lambda^{-1})\simeq H^0(K\lambda)^*$ and
by Riemann-Roch this space has dimension
\[
h^0(K\lambda)= l+3(g-1).
\]
Consider the embedding
\[
\varphi_\lambda:\Sigma_c\to \P H^0(K\lambda)^*,
\]
which assigns to each $p\in\Sigma_c$ the hyperplane $H^0(K\lambda(-p))$ of sections of $K\lambda$ which vanish at $p$ (this is
well-defined since the degree of $K\lambda$ is sufficiently high for it to be very ample). Relative to this the $l$-th secant
variety $\Sec_l(\Sigma_c)$ is the subvariety of $\P H^0(K\lambda)^*$ whose elements correspond to linear forms which vanish
on $H^0(K\lambda(-D))$ for some effective divisor $D$ of degree $l$. The following result is Prop.\ 1.1
of \cite{LanN} for the case where $d= l+2(g-1)$ and using the value $s=l+2-2(g-1)$.
In particular, the conditions $s\equiv d\bmod 2$ and $4-d\leq s\leq d$ of that proposition hold when $l\geq 1$.
\begin{lem}
The bundle $\xi$ has $s(\xi)\geq l+2-2(g-1)$ if and only if $\xi\not\in \Sec_l(\Sigma_c)$.
\end{lem}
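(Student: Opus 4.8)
The plan is to recognise this statement as the special case of Proposition~1.1 of \cite{LanN} with $d=\deg\lambda=l+2(g-1)$ and $s=l+2-2(g-1)$, so that the proof reduces to recalling the geometric content of that result and checking its numerical hypotheses hold here. I would organise the argument around the standard dictionary between an extension and the embedding $\varphi_\lambda$, and then read off the degree bound.

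First I would set up that dictionary. A non-split extension $0\to 1\to\xi\to\lambda\to 0$ is determined up to scalar by its class $e\in\P H^1(\lambda^{-1})=\P H^0(K\lambda)^*$. Given a line subbundle $M\subset\xi$, its composite with $\xi\to\lambda$ is either zero, in which case $M=1$ and $\deg M=0$, or nonzero, in which case it is injective with image $\lambda(-D)$ for a unique effective $D$ with $\deg D=d-\deg M$, and then $M\cong\lambda(-D)$. Pulling the extension back along $\lambda(-D)\hookrightarrow\lambda$ yields $0\to 1\to\xi'\to\lambda(-D)\to 0$ whose class is the image of $e$ under $H^1(\lambda^{-1})\to H^1(\lambda^{-1}(D))$, and $\lambda(-D)$ lifts to $\xi$ precisely when this pullback splits, i.e.\ when that image vanishes. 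By Serre duality the map $H^1(\lambda^{-1})\to H^1(\lambda^{-1}(D))$ is dual to the inclusion $H^0(K\lambda(-D))\hookrightarrow H^0(K\lambda)$, so the image of $e$ vanishes exactly when the functional $e$ annihilates the subspace $H^0(K\lambda(-D))$, equivalently when $e$ lies in the linear span of $\varphi_\lambda(D)$ in $\P H^0(K\lambda)^*$.

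Next I would extract the degree bound. Since $g\ge 2$ and $l\ge 1$ we have $2(g-1)>0$, so a line subbundle contained in the trivial sub-bundle never reaches degree $2(g-1)$; hence $\mu(\xi)\ge 2(g-1)$ if and only if $\xi$ has a line subbundle of degree $\ge 2(g-1)$ projecting nontrivially onto $\lambda$, i.e.\ if and only if $\lambda(-D)$ lifts to $\xi$ for some effective $D$ with $\deg D\le d-2(g-1)=l$. As $H^0(K\lambda(-D))\subseteq H^0(K\lambda(-D'))$ whenever $D'\le D$, enlarging $D$ only makes it easier for $e$ to vanish, so one may take $\deg D=l$ exactly, which is precisely the condition $e\in\Sec_l(\Sigma_c)$. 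Recalling $c_1(\xi)=d=l+2(g-1)$ so that $s(\xi)=d-2\mu(\xi)$, this shows $\xi\in\Sec_l(\Sigma_c)\iff s(\xi)\le l-2(g-1)$. Since $s(\xi)$ is always congruent to $c_1(\xi)\equiv l\pmod 2$, the negation of this inequality is exactly $s(\xi)\ge l+2-2(g-1)$, which is the claim. Alternatively, to quote \cite[Prop.~1.1]{LanN} directly one just verifies $s\equiv d\pmod 2$ and $4-d\le s\le d$ for these values: the parity holds because $2(g-1)$ is even, $s\le d$ is $2\le 4(g-1)$, and $4-d\le s$ is $1\le l$.

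The only step needing genuine care is the middle one: the translation, via Serre duality, of "the kernel of $H^1(\lambda^{-1})\to H^1(\lambda^{-1}(D))$" into "functionals annihilating $H^0(K\lambda(-D))$", together with the check that passing from divisors of degree $\le l$ to divisors of degree exactly $l$ loses nothing (and that taking saturations does not affect the maximal degree). Everything else — the parity of $s$, and the fact that line subbundles of $1$ have degree $\le 0$ — is routine bookkeeping.
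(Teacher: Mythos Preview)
Your proposal is correct and follows the same approach as the paper: both reduce the lemma to Proposition~1.1 of \cite{LanN} with $d=l+2(g-1)$ and $s=l+2-2(g-1)$, and verify the numerical hypotheses $s\equiv d\bmod 2$ and $4-d\le s\le d$. The paper stops there, simply citing the reference, whereas you additionally unwind the proof of that proposition via the standard dictionary (extension class $\leftrightarrow$ point of $\P H^0(K\lambda)^*$, line subbundle of degree $d-\deg D$ $\leftrightarrow$ vanishing of the class on $H^0(K\lambda(-D))$); this extra detail is correct and self-contained, but not something the paper provides.
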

Now
\[
\dim(\Sec_l(\Sigma_c))=2l-1< l+3g-4=\dim(\P H^0(K\lambda)^*),
\]
and therefore $\Sec_l(\Sigma_c)$ is a proper subvariety. It follows that there exist non-zero $\xi\in H^1(\lambda^{-1})$
with $s(\xi)\geq l+2-2(g-1)>l-2(g-1)$. We deduce that $\caV_l^0\neq \emptyset$.


\begin{thebibliography}{99}
\bibitem{AleC} D Alessandrini \& B Collier, \textit{The geometry of maximal components of the $PSp(4,\R)$ character variety},
arXiv:1708.05361, 56pp.
\bibitem{AL15} D Alessandrini \& Q Li, \textit{AdS 3-manifolds and Higgs bundles}, arXiv:1510.07745, 14pp.
\bibitem{AL} D Alessandrini \& Q Li, personal communication.
\bibitem{ApaG} M Aparicio Arroyo \& O Garc\'ia-Prada, \textit{Higgs bundles for the Lorentz group,}  Illinois J.\ Math.\ 55
(2011), p1299-1326.
\bibitem{Barthesis} D Baraglia, \textit{$G_2$ geometry and integrable systems}, Oxford D.Phil.\ thesis, 2009.
\bibitem{Bar15} D Baraglia, \textit{Cyclic Higgs bundles and the affine Toda equations}, Geom.\ Dedicata 174
(2015), 25-42.
\bibitem{BarS} D Baraglia \& L Schaposnik, \textit{Cayley and Langlands type correspondences for orthogonal Higgs bundles}, arXiv:1708.08828, 34pp.
\bibitem{BolW} J Bolton and L M Woodward, \textit{Congruence theorems for harmonic maps from a Riemann surface into
$\CP^n$ and $S^n$}, J.\ London Math.\ Soc.\ (2) 45 (1992), 363-376.
\bibitem{BGPM} S Bradlow, O Garc\'ia-Prada, \& I Mundet i Riera, \textit{Relative Hitchin-Kobayashi correspondences for principal
pairs}, Q.\ J.\ Math.\ 54 (2003), 171-208.
\bibitem{Col} B Collier, \textit{Maximal $Sp(4,\R)$ surface group representations, minimal immersions and cyclic surfaces},
Geom.\ Dedicata 180 (2016), 241-285.
\bibitem{ColTT} B Collier, N Tholozan \& J Toulisse, \textit{The geometry of maximal representations of surface groups into
$SO(2,n)$}, arXiv:1702.08799, pp46.
\bibitem{Cor} K Corlette, \textit{Flat $G$-bundles with canonical metrics}, J.\ Diff.\ Geom.\ 28 (1988), 361-382.
\bibitem{DEL} R Donagi, L Ein, \& R Lazarsfeld, \textit{Nilpotent cones and sheaves on K3 surfaces}, Birational algebraic geometry (Baltimore, MD, 1996), 51-61, Contemp. Math. 207, Amer.\ Math.\ Soc., Providence, RI, 1997.
\bibitem{Don} S K Donaldson, \textit{Twisted harmonic maps and the self-duality equations}, Proc.\ London Math.\
Soc.\ (3) 55 (1987), 127-131.
\bibitem{ErdG} S Erdem \& J Glazebrook, \textit{Harmonic maps of Riemann surfaces to indefinite complex hyperbolic and
projective spaces}, Proc.\ London Math.\ Soc.\ (3) 47 (1983), 547-562.
\bibitem{Garapp} O Garcia-Prada, appendix in R O Wells, \textit{Differential analysis on complex manifolds}, Springer
Graduate Texts in Math.\ 65, Springer-Verlag, 2008.
\bibitem{Gar} O Garc\'ia-Prada, \textit{Higgs bundles and surface group representations,} in
\textit{Moduli spaces and vector bundles,} 265-310, LMS Lect.\ Note Ser., 359, Cambridge, 2009.
\bibitem{GR} H Grauert and R Remmert, \textit{Coherent analytic sheaves}, Grundelehren der math.\ Wiss.\ 265, 1984.
%\bibitem{Har} R Hartshorne, \textit{Algebraic geometry}, Graduate Texts in Mathematics 52, Springer-Verlag,
%Berlin, 1977.
\bibitem{Hau} T Hausel, \textit{Compactification of moduli of Higgs bundles}, J.\ Reine Angew.\ Math.\ 503 (1998), 169-192.
\bibitem{Hit87} N Hitchin, \textit{The self-duality equations on a Riemann surface}, Proc.\ London Math.\ Soc.\ (3) 55 (1987), 59-126.
\bibitem{Hit92} N Hitchin, \textit{Lie groups and Teichm\"uller space}, Topology (3) 31 (1992), 449-473.
\bibitem{Kir} F C Kirwan, \textit{Cohomology of quotients in symplectic and algebraic geometry},
Math.\ Notes, 31. Princeton University Press, Princeton, NJ, 1984.
\bibitem{Kod} K Kodaira, \textit{Complex manifolds and deformation of complex structures.}
Grundlehren der Math.\ Wissenschaften, 283. Springer-Verlag, New York, 1986.
\bibitem{Lab06} F Labourie, \textit{Anosov flows, surface groups and curves in projective space}, Invent.\ Math.\ 165 (2006), 51-114.
\bibitem{Lab17} F Labourie, \textit{Cyclic surfaces and Hitchin components of rank 2}, Ann.\ of Math.\ (2) 185 (2017), no.\ 1, 1-58.
\bibitem{LanN} H Lange \& M S Narasimhan, \textit{Maximal subbundles of rank two vector bundles on curves,} Math.\ Ann.\ 266
(1983), p55-72.
\bibitem{LofM15} J C Loftin \& I McIntosh, \textit{Equivariant minimal surfaces in $\CH^2$ and their Higgs bundles,}
arXiv:1510.00553, pp36, to appear in Asian J.\ Math.
\bibitem{Sim88} C Simpson, \textit{Constructing variations of Hodge structure using Yang-Mills theory and applications to uniformization}, J.\ Amer.\ Math.\ Soc.\ 1 (1988), 867-918.
\bibitem{Tau} C H Taubes, \textit{Minimal surfaces in germs of hyperbolic 3-manifolds,} Proceedings of the Casson Fest, 69-100,
Geom.\ Topol.\ Monogr., 7, Geom.\ Topol.\ Publ., Coventry, 2004.
\bibitem{Wen} R A Wentworth, \textit{Higgs bundles and local systems on Riemann surfaces}, in Geometry and quantization of
moduli spaces, 165-219, Adv.\ Courses Math.\ CRM Barcelona, Birkhäuser/Springer, Cham, 2016.
\end{thebibliography}
\end{document}